\theoremstyle{plain}
\newtheorem{theorem}{Theorem}[section]
\newtheorem{proposition}[theorem]{Proposition}
\newtheorem{lemma}[theorem]{Lemma}
\newtheorem{remark}[theorem]{Remark}
\theoremstyle{definition}
\newtheorem{definition}[]{Definition}
\newtheorem{example}[theorem]{Example}
\newcommand{\im}{\mathrm{i}}
\newcommand{\bydef}{\,\stackrel{\mbox{\tiny\textnormal{\raisebox{0ex}[0ex][0ex]{def}}}}{=}\,}
\newcommand{\ellnu}{\ell_{\nu}^1}
\newcommand{\R}{\mathbb{R}}
\begin{document}
 \title{Computer assisted proof of homoclinic chaos \\ 
 in the spatial equilateral restricted four body problem
}
\author{
Maxime Murray  \footnote{Florida Atlantic University, 777 Glades Rd.,
Boca Raton, FL 33431, USA. mmurray2016@fau.edu} \and
J.D. Mireles James \footnote{Florida Atlantic University, 777 Glades Rd.,
Boca Raton, FL 33431, USA. jmirelesjames@fau.edu}
}
\maketitle
\begin{abstract}
We develop computer assisted arguments for proving the 
existence of transverse homoclinic connecting orbits, 
and apply these arguments for a number of non-perturbative
parameter and energy values in the 
spatial equilateral circular restricted four body problem.  
The idea is to formulate the desired connecting orbits as solutions of 
certain two point boundary value problems for orbit segments 
which originate and terminate on the local 
stable/unstable manifolds attached to a periodic orbit. 
These boundary value problems are studied via a Newton-Kantorovich 
argument in an appropriate Cartesian product of Banach algebras of 
rapidly decaying sequences of Chebyshev coefficients.  Perhaps the most 
delicate part of the problem is controlling the boundary conditions, 
which must lie on the local stable/unstable manifolds of the periodic orbit.
For this portion of the problem we use a parameterization method to 
develop Fourier-Taylor approximations equipped with a-posteriori error bounds.
This requires validated computation of a finite number of Fourier-Taylor coefficients
via Newton-Kantorovich arguments in appropriate Cartesian  product of
rapidly decaying sequences of Fourier coefficients, followed by a fixed point
argument to bound the tail terms of the Taylor expansion.  Transversality 
follows as a consequence of the Newton-Kantorovich argument.  
\end{abstract}


\section{Introduction}
The goal of the present work is to prove the existence of chaotic dynamics
in a particular restricted gravitational four body problem. The restriction 
is that the orbits of three of the bodies, called \textit{the primaries}, are
constrained to the equilateral triangle configuration of Lagrange.  That is, 
at each instant the primaries are located at the vertices of an equilateral
triangle, rigidly rotating with constant angular velocity about the center of mass.  
Considered singly, the orbit of each primary is a Keplerian circle about their 
center of mass.  The three circles need not be the same: they coincide 
if and only if the masses of the three bodies have equal mass.
   
One now introduces a fourth, massless particle 
 moving under the 
gravitational influence of the three massive primaries. 
Imagine a man-made space craft/satellite or small 
natural astroid/comet.
Changing to a co-rotating coordinate frame fixes the locations of the primary 
bodies, and the equations of motion for the massless particle
are now autonomous, albeit in a non-inertial frame.
The problem is referred to as the equilateral restricted four body problem, or simply the 
circular restricted four body problem (CRFBP), and it was derived and first studied 
by Pedersen \cite{pedersen1,pedersen2} in the mid 1940's and 50's.
Interest in the problem was revived in the late 1970's, after 
the study of Sim\'{o} \cite{MR510556}.
The equations of motion are given explicitly in Section \ref{sec:eqOfMotion}.  

The plane of the triangle is an invariant subsystem, and
chaotic motions in the planar problem have been established in 
a number of different contexts.  For example the authors of 
\cite{MR3626383,MR3038224,MR3158025}
prove the existence of planar chaotic motions by taking the 
mass of the second and third primary equal and very small. 
More precisely, they obtain the existence of planar chaos in CRFBP 
using Melnikov analysis and perturbing out of the planar Kepler problem. 
Chaos via the mechanism of Devaney (see \cite{MR442990}) is studied in 
 \cite{MR3105958}, again in the case that the second and third masses are
 small and equal.  For non-equal, and non-perturbative masses 
 $m_1 = 0.5, m_2 = 0.3, m_3 = 0.2$ (the same non-symmetric mass 
 values studied in  \cite{MR510556}), the authors of \cite{MR3906230}
 show the existence of planar chaotic motions by directly verifying  
 the hypotheses of Devaney's theorem.  The proof is computer 
 assisted.

In the present work we consider the \textit{spatial}
CRFBP, and prove the existence of out of plane chaotic dynamics.   
The idea of our proof is to directly verify the hypotheses of Smale's homoclinic tangle 
theorem \cite{MR228014}, using constructive computer assisted methods. 
 Recall that -- in brief -- the Smale theorem says the existence
of a transverse homoclinic to a periodic solution of an ODE implies chaos. 
In an earlier work \cite{MR4128684}, the present authors 
provide numerical evidence suggesting the existence spatial chaos
in the CRFBP.  The idea was to 
study the vertical Lyapunov families of periodic orbits attached to the 
the saddle-focus libration points of the CRFBP.  
 These saddle-focus libration points were 
shown to admit planar homoclinic orbits in \cite{MR3906230}, and
in  \cite{MR4128684} we used these planar homoclinics to locate approximate homoclinic connections for vertical Lyapunov 
family. These approximations were projected onto parameterizations
of the stable/unstable manifolds of the periodic orbits, and refined via
a differential corrections/Newton scheme.

The present work begins where  \cite{MR4128684} left off.  That is,
we develop an a-posteriori argument which allows us to pass 
from the numerical evidence in  \cite{MR4128684} to a
mathematically rigorous  (computer assisted)  proof of the existence 
of spatial Smale horseshoes in the CRFBP.  
We remark that computer assisted proofs of chaos in the 
planar restricted three body problem (as opposed to the four body case 
considered here) were established 
in \cite{MR1947690} for the case of equal masses, and in 
\cite{WilczakZgli,MR2174417,MR3032848}
for the Sun-Jupiter mass values.
These works exploit Poincare sections (restricted to an energy manifold), 
a strategy which reduces
the problem to a question about intersections of one dimensional arcs in the plane.  
Exploiting this reduction in the spatial case leads to questions about intersections of 
two dimensional manifolds for four dimensional Poincare maps, and visualizing the problem is much 
more difficult.  

While the geometric methods used by the authors just cited can certainly be extended 
to higher (even infinite dimensions - see for example \cite{MR4113209}), we choose 
an alternative  approach, which is to 
develop a-posteriori analysis for the  BVP set up
utilized in  \cite{MR4128684}, working directly with the periodic orbits, their attached local 
invariant manifolds, and connecting orbit
segments between them in the six dimensional phase space of the ODE.  Actually
--since our approach requires extensive manipulation of Taylor, Fourier, Chebyshev, 
and Fourier-Taylor series --   we find it convenient to append three additional differential equations, 
effectively embedding the CRFBP into a nine dimensional \textit{polynomial system} of ODEs.  
(This polynomial embedding is reviewed in \ref{sec:eqOfMotion}).

In this sense, the present work builds on the earlier work of 
\cite{Ransford,MR3896998}
on computer assisted Fourier analysis of periodic orbits, 
the work of \cite{MR3871613}
on validated Fourier-Taylor computation of local 
stable/unstable manifolds attached to periodic orbits, 
and the works of \cite{MR3148084,MR3353132,paperBridge}
on computer assisted proofs for two point BVPs 
projected into Chebyshev space.  
One important technical point is that, since we work with higher dimensional 
manifolds and in a higher dimensional phase than in these prior works, 
we find it convenient  to formulate the a-posteriori error analysis 
using a ``matrix free'' approach,
similar to that developed for Taylor series in \cite{MR3281845,MR3792792}.
That is, we do not employ a Newton-Like operator in the error analysis of the 
stable/unstable manifolds of the periodic orbits, and hence avoid working with 
large interval matrices.  Rather, we formulate  a fixed point problem on an appropriate 
space of ``Fourier-Taylor tails'' which -- for a given choice of the 
truncation order -- contracts for appropriate choices of 
the scalings of the stable/unstable bundle parameterizations.
It is also important to note that one pair of parameterized stable/unstable 
manifolds (with validated error bounds) can be used in many different computer
assisted proofs of distinct connecting orbits.  This provides one justification of 
the computational effort which goes into studying them.

The remainder of the paper is organized as follows.
In the next subsection we review the Equations of motion for the CRFBP
\ref{sec:eqOfMotion}.
Section \ref{sec:background} reviews background material
on the parameterization method, Banach algebras of rapidly decaying 
coefficients, and a-posteriori analysis needed 
in the remainder of the paper.  
In Section \ref{sec:lowOrderTerms} we discuss the computer assisted techniques for 
proving the existence of the periodic orbits, performing validated computations 
of their stable/unstable normal bundles, solving the homological 
equations describing the jets of the stable/unstable manifolds, and
establishing the existence of connecting orbits via the solution 
of two point boundary value problems.  
Mathematically rigorous bounds on the tail of the Fourier-Taylor expansion 
of the stable/unstable manifold are developed in 
\ref{sec:TailArgument}.
In Section \ref{Sec:Results} we discuss our main results, and 
several appendices describe some more technical/tedious bounds.  
The computer codes which execute the computer assisted portions 
of our arguments are implemented in MatLab using the IntLab library
for managing round off errors \cite{Ru99a}, 
and are freely available on the homepage of the second author:

\smallskip

\noindent \url{https://cosweb1.fau.edu/~jmirelesjames/spatialCRFBP_CAP_chaos.html}

\subsection{The (Spatial) Equilateral Restricted Four Body Problem} \label{sec:eqOfMotion}

\begin{figure}[!t]
\centering
\includegraphics[width=4.0in]{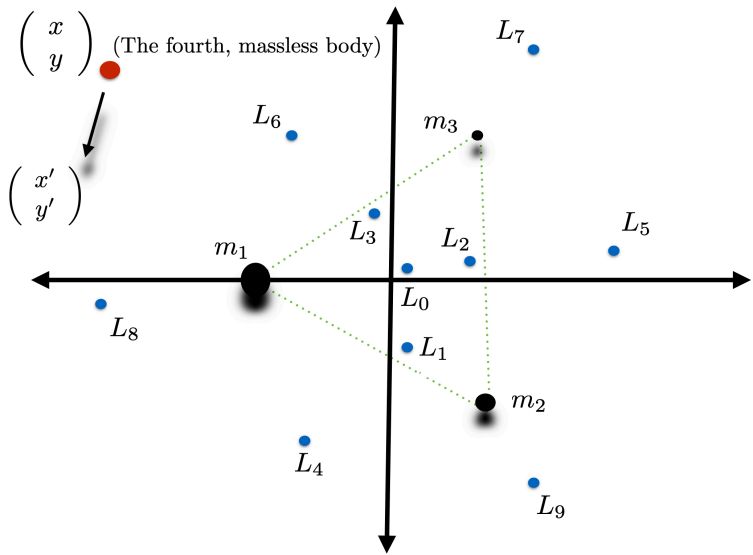}
\caption{\textbf{Configuration space for the CRFBP:} The three 
primary bodies with masses $m_1,m_2,$ and $m_3$ are 
arranged in the equilateral triangle configuration of 
Lagrange --  a relative equilibrium solution of the 
three body problem.    Transforming to a
co-rotating frame one considers the motion of a fourth
massless body.  The equations of motion have 
$8$, $9$, or $10$ equilibrium solutions
denoted by $\mathcal{L}_j$
for $0 \leq j \leq 9$.  The number of libration points,
and their stability, varies depending on $m_1$, $m_2$, and $m_3$. 
In this work we study the points $\mathcal{L}_{0, 4,5,6}$ which 
have saddle focus stability when $m_1 \approx m_2 \approx m_3$.
}\label{fig:rotatingframe}
\end{figure}

Let the three primary bodies have masses $m_1$, $m_2$ and $m_3$, 
normalized so that $0 < m_3 \leq m_2 \leq m_1$,
and 
\[
m_1 + m_2 + m_3 = 1.
\] 
We sometimes refer to the primary bodies simply as $m_1, m_2,$ and $m_3$.
Further normalizations allow taking the center of mass at the origin,
the position of $m_1$ on the negative $x$-axis, and $m_3$ 
is in the first quadrant.  

Under these constraints, the locations of the primaries are functions of the
masses $m_1, m_2, m_3$ only.  That is, letting
\[
p_j = (x_j, y_j,z_j), \quad \quad \quad \mbox{for } j = 1,2,3,
\]
denote the positions of the primary bodies in the rotating coordinate system, 
and defining 
\[
K = m_2(m_3 - m_2) + m_1(m_2 + 2 m_3),
\]
it can be shown that 
\begin{eqnarray*}
x_1 &=&   \frac{-|K| \sqrt{m_2^2 + m_2 m_3 + m_3^2}}{K} \\
y_1 &=&   0 
\end{eqnarray*}
\begin{eqnarray*}
x_2 &=&  \frac{|K|\left[(m_2 - m_3) m_3 + m_1 (2 m_2 + m_3)  \right]}{
2 K \sqrt{m_2^2 + m_2 m_3 + m_3^2}
}  \\
y_2 &=&  \frac{-\sqrt{3} m_3}{2 m_2^{3/2}} \sqrt{\frac{m_2^3}{m_2^2 + m_2 m_3 + m_3^2}},
\end{eqnarray*}
\begin{eqnarray*}
x_3 &=&  \frac{|K|}{2 \sqrt{m_2^2 + m_2 m_3 + m_3^2}}  \\
y_3 &=&  \frac{\sqrt{3}}{2 \sqrt{m_2}} \sqrt{\frac{m_2^3}{m_2^2 + m_2 m_3 + m_3^2}}, 
\end{eqnarray*}
and 
\[
z_1 = z_2 = z_3 = 0. 
\]
We refer to  \cite{MR510556,MR3105958} for detailed derivations of these
expressions, and note that  explicit formulas for the coordinates of the primaries 
are essential, as the distance from the massless particle to the primaries 
is essential in the equations of motion.  

Indeed, letting 
\[
r_j(x,y,z) := \sqrt{(x-x_j)^2 + (y-y_j)^2 +(z-z_j)^2}, \quad \quad \quad j = 1,2,3,
\]
denote the distances between the primaries and the massless particle, one
defines the potential function 
\[
\Omega(x,y,z) :=
\frac{1}{2} (x^2 + y^2) + \sum_{j=1}^3 \frac{m_j}{r_j(x,y,z)}.
\]
The equations of motion describing the infinitesimal particle 
in a co-rotating frame are 
\begin{equation}\begin{split}\label{ecuacionesfinales}
\ddot{x}-2\dot{y}&=\Omega_{x},\\
\ddot{y}+2\dot{x}&=\Omega_{y},\\
\ddot{z} &= \Omega_z.
\end{split}
\end{equation}
We remark that the system preserves the first integral 
\begin{equation}\label{eq:Energy}
H(x, \dot x, y, \dot y, z, \dot z) = x^2 +y^2 +2\left(\frac{m_1}{r_1(x,y,z)} 
+\frac{m_2}{r_2(x,y,z)} +\frac{m_3}{r_3(x,y,z)}\right)  -(\dot x^2 +\dot y^2 +\dot z^2),
\end{equation}
which is traditionally referred to as the Jacobi integral.

It was conjectured in \cite{MR510556} (based on a careful numerical analysis) 
that the CRFBP admits $8, 9$ or $10$ equilibrium solutions, depending on the 
values of the mass parameters.  All of the equilibrium solutions, 
which are traditionally referred to as ``libration points,'' lie in the $xy-$ plane -- 
that is, the invariant plane of the equilateral triangle. 
For a detailed discussion of stability of the equilibrium solutions see 
\cite{MR4307390,MR4384822}.
Mathematically rigorous (computer assisted) proofs 
confirming the correctness of the conjectured 
libration point count for all 
values of the mass parameters, are found in 
\cite{MR2232439,MR2784870,MR3176322} and also in 
\cite{tuckerLibrations}.
No closed formulas for the locations of the equilibrium solutions
exist, so that in practice they are computed numerically
via Newton's method.  Many out of plane periodic solutions were proven 
to exist (again with computer assistance) in 
\cite{MR3896998}.

A schematic describing the locations of the 10 equilibrium solutions, 
along with our naming conventions,  is given in Figure \ref{fig:rotatingframe}.
The reader interested in a more thorough discussion of the qualitative features of 
the dynamics of the CRFBP may consult the works of 
\cite{BaltagiannisPapadakis,MR2845212,MR3500916,MR3571218,MR3715396,MR2027748,MR2596303,MR3919451,MR4128684},
and the references contained therein.

\subsection{Polynomial embedding} \label{sec:polyField}
We now describe the polynomial embedding used to simplify formal series calculation
for the CRFBP.
Write  
\begin{equation}\label{eq:AutomaticDiffeqn1}
 u_1 = x, \quad u_2 = \dot x, \quad u_3= y, \quad u_4 = \dot y, \quad u_5=z,\quad u_6= \dot z,
\end{equation}
and consider the first order ODE $\dot u = f(u)$ given by
\begin{equation} \label{eq:FirstOrderOriginal}
\begin{split}
\dot{u_1} &= u_2, \\
\dot{u_2} &= 2 u_4 +\Omega_{u_1},\\
\dot{u_3} &= u_4, \\
\dot{u_4} &= -2u_2 +\Omega_{u_3},\\
\dot{u_5} &= u_6, \\
\dot{u_6} &= \Omega_{u_5}.
\end{split}
\end{equation}
Now define the auxiliary variables 
\begin{align}
u_7 = \frac{1}{\sqrt{(x-x_1)^2 +(y-y_1)^2 +(z-z_1)^2}} =  \frac{1}{\sqrt{(u_1-x_1)^2 +(u_3-y_1)^2 +(u_5-z_1)^2}}, \label{eq:AutomaticDiffeqn2} \\
u_8 = \frac{1}{\sqrt{(x-x_2)^2 +(y-y_2)^2 +(z-z_2)^2}} =  \frac{1}{\sqrt{(u_1-x_2)^2 +(u_3-y_2)^2 +(u_5-z_2)^2}}, \label{eq:AutomaticDiffeqn3} \\
u_9 = \frac{1}{\sqrt{(x-x_3)^2 +(y-y_3)^2 +(z-z_3)^2}} =  \frac{1}{\sqrt{(u_1-x_3)^2 +(u_3-y_3)^2 +(u_5-z_3)^2}}, \label{eq:AutomaticDiffeqn4}
\end{align}
and let $U \subset \mathbb{R}^6$ denote the open set which excludes the locations of the primary bodies.
Consider the function $R:U \to \mathbb{R}^9$ given by 
\begin{equation}\label{eq:R}
R(u_1,u_2,u_3,u_4,u_5,u_6) = \begin{pmatrix}
u_1 \\ u_2 \\ u_3 \\ u_4 \\ u_5 \\ u_6 \\                
\frac{1}{\sqrt{(u_1-x_1)^2 +(u_3-y_1)^2 +(u_5-z_1)^2}} \\
\frac{1}{\sqrt{(u_1-x_2)^2 +(u_3-y_2)^2 +(u_5-z_2)^2}} \\
\frac{1}{\sqrt{(u_1-x_3)^2 +(u_3-y_3)^2 +(u_5-z_3)^2}}
\end{pmatrix}.
\end{equation}

Note that by differentiating Equation \eqref{eq:AutomaticDiffeqn2} and 
applying the chain rule, we have that 
\[
u_7' = (x_1-u_1)u_2u_7u_7u_7 +(y_1-u_3)u_4u_7u_7u_7 +(z_1-u_5)u_6u_7u_7u_7,
\]
and that similar equations calculations hold for $u_{8,9}$.  
Motivated by these observations we define the polynomial 
vector field 
$F \colon \mathbb{R}^9 \to \mathbb{R}^9$ 
by 
\begin{align}\label{eq:Fwithoutalpha}
F(u)= 
\begin{pmatrix}
u_2 \\
2u_4 +u_1 +m_1(x_1-u_1)u_7u_7u_7 +m_2(x_2-u_1)u_8u_8u_8 +m_2(x_3-u_1)u_9u_9u_9 \\
u_4 \\
-2u_2 +u_3 +m_1(y_1-u_3)u_7u_7u_7 +m_2(y_2-u_3)u_8u_8u_8 +m_2(x_3-u_1)u_9u_9u_9 \\
u_6 \\
m_1(z_1-u_5)u_7u_7u_7 +m_2(z_2-u_5)u_8u_8u_8 +m_2(z_3-u_5)u_9u_9u_9 \\
(x_1-u_1)u_2u_7u_7u_7 +(y_1-u_3)u_4u_7u_7u_7 +(z_1-u_5)u_6u_7u_7u_7 \\
(x_2-u_1)u_2u_8u_8u_8 +(y_2-u_3)u_4u_8u_8u_8 +(z_2-u_5)u_6u_8u_8u_8 \\
(x_3-u_1)u_2u_9u_9u_9 +(y_3-u_3)u_4u_9u_9u_9 +(z_3-u_5)u_6u_9u_9u_9
\end{pmatrix}.
\end{align}
The Jacobi integral becomes 
\begin{equation}\label{eq:EnergyPolyEmbedding}
H(u) = u_1^2 +u_3^2 +2\left(m_1u_7 +m_2u_8 +m_3u_9\right)  -(u_2^2 +u_4^2 +u_6^2).
\end{equation}

The polynomial field $F$ and the CRFBP field $f$
enjoy the infinitesimal conjugacy relation 
\begin{equation}\label{eq:autoDiff}
DR(u)f(u) = F(R(u)), \quad \forall u \in U.
\end{equation}
It follows that orbits of $u' = F(u)$ with initial conditions on $\mbox{graph}(R)$
correspond, after projection onto the first six coordinates, to orbits of $x' = f(x)$.
Note that the polynomial vector field does not have any singularity. Nevertheless,
the dynamics of the two are related only on the graph of $R$, so that
$R$ caries the singularities of $f$.  That is, the 
polynomial embedding does not regularize collisions.  
Instead the virtue of $F$ is that we have a problem involving only 
differentiation and multiplication, two operations with excellent formal
and numerical properties when working with Fourier-Taylor series.  
For more general discussion, see  \cite{MR3906230}.
In the present work we study periodic solutions, their attached local stable/unstable manifolds,
and connecting orbits for the polynomial 
vector field $F$.  Results for $f$, the CRFBP, are obtained by projection.  
 

\section{Background} \label{sec:background}

\subsection{Parameterization method: case of complex conjugate Floquet multipliers} \label{sec:parmMethod}
The Smale homoclinc tangles studied in the present work are
built on periodic orbits whose stable/unstable Floquet multipliers come in 
complex conjugate pairs.  It follows that  
the attached local stable/unstable manifolds are three dimensional,
and we compute these manifolds using the parameterization method.  
The main idea of the parameterization method in this context
is to study a complex infinitesimal invariance equation
which conjugates the dynamics on the manifold to a simple linear 
flow generated by the complex conjugate multipliers.  
However, since we are ultimately interested in the real dynamics of the system,  
we have to discuss how to obtain the real image of the complex parameterization.  

We remark that a parameterization method for stable/unstable manifolds 
associated with a singe real stable or unstable multiplier for periodic orbits
of vector fields  was introduced in  
\cite{MR2177465}.  Generalizations to higher dimensional manifolds,
efficient algorithms, and techniques for a-posteriori error analysis are developed in
 \cite{MR2551254,MR3118249,MR3304254,robertoII,MR3927444,MR3754682}.
In the next section, we review the parameterization method for the case of a
complex conjugate pair of multipliers.

\subsubsection{Linear stability of a periodic solution} \label{sec:floquet}
Let $U$ be an open subset of 
$\mathbb{R}^n$ and $f: U \to \R^n$  
be a real analytic vector field.
Suppose that $\gamma: \mathbb{R} \to \R^n$ 
has that 
\begin{equation}\label{eq:ode}
\frac{d}{dt}\gamma(t) = f(\gamma(t)),
\end{equation}
with 
\[
\gamma(t + T) = \gamma(t), 
\]
for all $t \in \mathbb{R}$.  Then $\gamma$
 is a $T$ periodic solution of the ODE.

We say that $\lambda \in \mathbb{C}$ is a Floquet multiplier
of the periodic orbit $\gamma$, with attached invariant vector
bundle $\xi \colon \mathbb{R} \to \mathbb{R}^n$, if $\xi(t)$ is 
a periodic function and 
the pair $(\lambda, \xi(t))$ satisfy the eigenvalue problem 
\begin{equation}\label{eq:eigenvalueProblem}
-\frac{d}{d t} \xi(t) + Df(\gamma(t)) \xi(t) = \lambda \xi(t). 
\end{equation}
It is a standard result from Floquet theory that the period of 
$\xi(t)$ can only be $T$ or $2T$.
In the former case we say that the vector 
bundle is orientable, and say that it is 
non-orientable in the later.

A periodic solution $\gamma(t)$ always has one trivial 
Floquet multiplier, associated with the tangent bundle of the orbit
(to see this, simply take $\xi(t) = \gamma'(t)$ and 
differentiate Equation \eqref{eq:ode} with respect to $t$). 
In the remainder of the present work, we are especially 
interested in the case where $\gamma(t)$ has a 
complex conjugate pair of Floquet multipliers 
\[
\lambda = \alpha + i \beta, \quad \quad \mbox{and} \quad \quad 
\overline{\lambda} = \alpha - i \beta,
\]
with $\alpha, \beta \in \mathbb{R}$ and $\beta > 0$.

\begin{remark}[Systems with a first integral] \label{rem:HamFloquet}
If $f$ has a conserved first integral
(as is the case for the spatial CRTBP), then 
$\gamma$ has a second zero Floquet multiplier in the direction 
normal to the level set of the conserved quantity.  The 
remaining $d - 2$ multipliers are (generically) either stable, unstable, 
or purely imaginary. (That is, additional multipliers of zero imply that 
$\gamma$ is undergoing a local bifurcation).
In the Hamiltonian case, even more is true, and we have 
that if $\lambda \in \mathbb{C}$ is a Floquet multiplier
then so are $-\lambda, \overline{\lambda}$, and $-\overline{\lambda}$, though
these are not distinct if $\lambda$ is real. 
In this paper we are especially interested in the case when $d = 6$, and where
the four non-zero multipliers are of the form 
\[
\lambda_{1,2,3,4} = \pm \alpha \pm i \beta, 
\]
for $\alpha, \beta > 0$.  In this case the stable/unstable manifolds
attached to $\gamma$ are three dimensional.  
We remark that the CRFBP is Hamiltonian in position/momentum 
variables.  While we employ position velocity variables in the present 
work, the change between these two systems is affine and more importantly
does not effect the stability of periodic orbits. 
\end{remark}

In the present work, we are especially interested in the case where 
 $\lambda_1,\lambda_2 \in \mathbb{C}$ are a pair of complex
conjugate stable Floquet exponents
 for the periodic orbit $\gamma$.  We write $\lambda_1 = -\alpha + i \beta$ and 
 $\lambda_2 = \overline{\lambda_1}$.

\begin{figure}[!t]
	\includegraphics[width=1.0\textwidth]{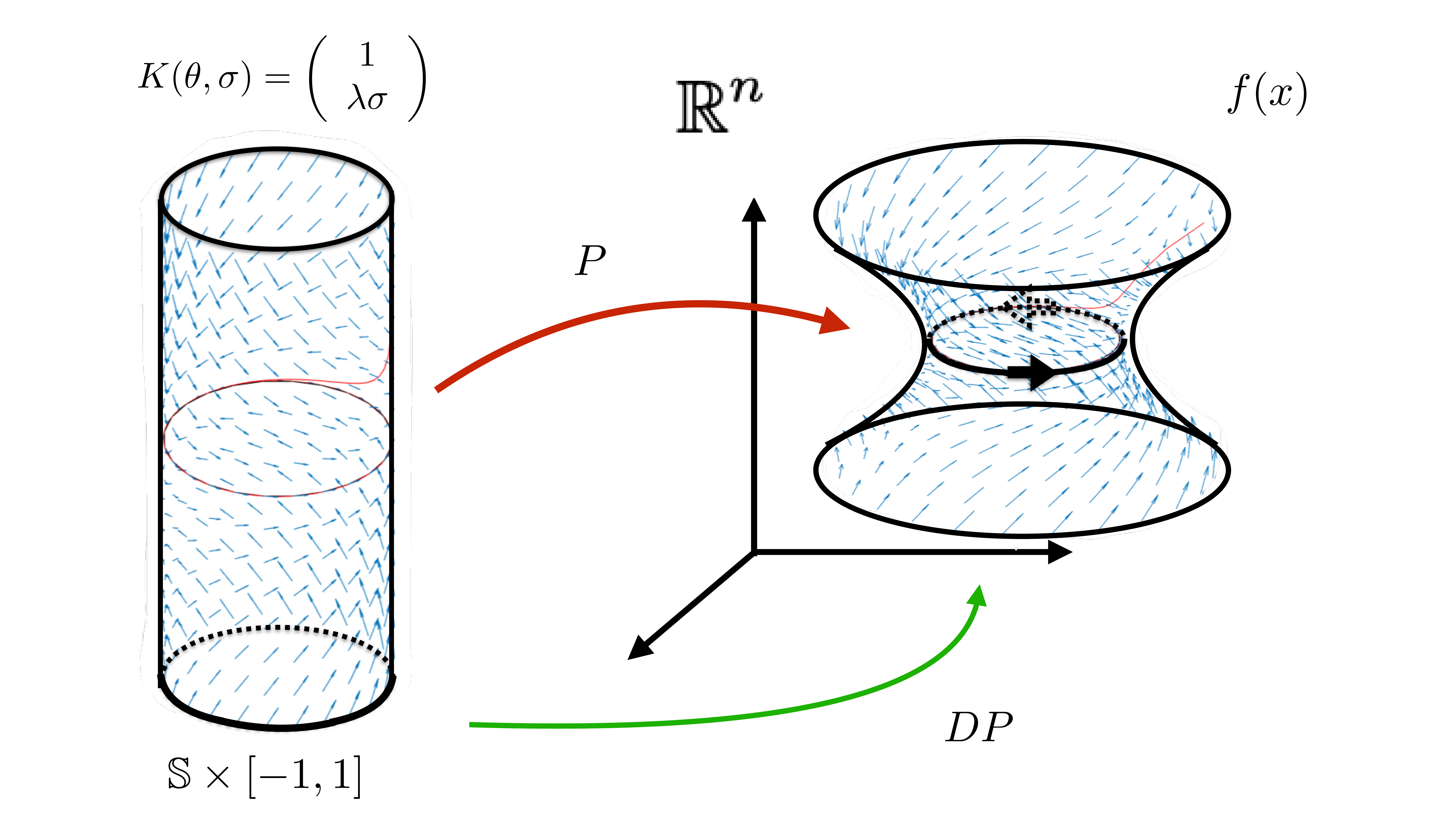}
\caption{\textbf{Geometric meaning of Equation \eqref{eq:conjugacy}:} the figure illustrates
the idea behind the parameterization method, which is that we model the dynamics on the 
stable manifold of the periodic using the vector field $K$ on the cylinder.  If the push forward 
of $K$ under $DP$ agrees with the vector field $f$ restricted to the image of $P$, then 
$P$ maps orbits to orbits, and the dynamics of the two systems --the toy system and the 
stable manifold -- are the same. This figure pictures a lower dimensional example, but this idea persists in higher dimensional cases.
}\label{fig:parmSchematic}
\end{figure}

 \subsubsection{The conjugacy equation for the parameterization method}
 Define the vector field 
 \[
 K(\theta, z_1, z_2) = \left(
 \begin{array}{c}
 1 \\
 \lambda_1 z_1 \\
 \lambda_2 z_2
 \end{array}
 \right),
 \]
on the cylinder $\mathcal{C} = \mathbb{S} \times \mathbb{D}^2$. Here 
$\mathbb{D}$ is the unit polydisk in $\mathbb{C}^2$ given by
\[
\mathbb{D}= \left\{ (z_1, z_2) \in \mathbb{C}^2 \, | \, |z_1|, |z_2| < 1 \right\}.
\]
Note that the ODE
\[
\left(
\begin{array}{c}
\theta' \\
z_1' \\
z_2' \\
\end{array}
\right) = 
\left(
\begin{array}{c}
1 \\
\lambda_1 z_1 \\
\lambda_2 z_2 \\
\end{array}
\right)
\]
generates the flow 
\[
\phi(\theta, z_1, z_2, t) = \left(
\begin{array}{c}
\theta + t  \quad (\mbox{mod } 1)\\
z_1 e^{\lambda_1 t} \\
z_2 e^{\lambda_2 t} \\
\end{array}
\right), 
\]
where 
\[
e^{\lambda_1 t} = e^{-\alpha t}\left( \cos(\beta t) + i \sin(\beta t) \right),
\quad \quad \mbox{and} \quad \quad 
e^{\lambda_2 t} = e^{-\alpha t}\left( \cos(\beta t) - i \sin(\beta t) \right),
\]
are complex exponentials.  
We have that 
\begin{itemize}
\item \textbf{Forward invariance:} If $(\theta, z_1, z_2) \in \mathbb{S} \times \mathbb{D}^2$ then 
$\phi(\theta, z_1, z_2, t) \in \mathbb{S} \times \mathbb{D}^2$ for all $t \geq 0$.
\item \textbf{Real image:}
If $z_1 = \sigma_1 + i \sigma_2$ and $z_2 = \sigma_1 - i \sigma_2$ with 
$\sigma_1, \sigma_2 \in \mathbb{R}$ and $\sigma_1^2 + \sigma_2^2 < 1$, then 
\[
\phi(\theta, \sigma_1 + i \sigma_2, \sigma_1 - i \sigma_2, t) \in \mathbb{S} \times B^2, 
\]
for all $t \geq 0$, where $B^2 \subset \mathbb{R}^2$ is the real unit disk in the plane.  
\item \textbf{Real bundles:} 
If $\lambda_1, \lambda_2$ are complex conjugate Floquet multipliers, then we
can choose complex conjugate Floquet bundles, in the sense that $(\lambda_1, \xi_1(\theta))$
and $(\lambda_2, \xi_2(\theta))$ are solutions of Equation \eqref{eq:eigenvalueProblem}, and 
there are $T$ periodic $\eta_1, \eta_2 \colon \mathbb{R} \to \mathbb{R}^d$ so that 
\[
\xi_1(\theta) = \eta_1(\theta) + i \eta_2(\theta), 
\]
while 
\[
\xi_2(t)  = \eta_1(\theta) - i \eta_2(\theta).
\]
\item \textbf{Real linear approximation: }
The linear approximation of the true dynamics near $\gamma$
is given by 
\[
L(\theta, \sigma_1, \sigma_2, t) = 
\gamma(\theta + t)  +
 \xi_1(\theta+t)(\sigma_1 + i \sigma_2) e^{\lambda_1 t} + 
 \xi_2(\theta+t) (\sigma_1 - i \sigma_2) e^{\lambda_2 t},
\]
and we note that $L(\theta, \sigma_1, \sigma_2, t)$ is real valued.
\end{itemize}

The following lemma provides a method for obtaining higher order 
corrections to the linear approximation in a natural way.  

\begin{lemma}[Parameterization lemma - case of complex conjugate Floquet exponents] \label{lem:Parm} 
Suppose that $P \colon \mathbb{S} \times \mathbb{D}^2 \to \mathbb{C}^d$ satisfies the first order 
constraints 
\[
P(\theta, 0, 0) = \gamma(\theta), 
\]
with 
\[
\frac{\partial}{\partial z_1} P(\theta, 0, 0) = \xi_1(\theta), 
\quad \quad \mbox{and} \quad \quad 
\frac{\partial}{\partial z_2} P(\theta, 0, 0) = \xi_2(\theta),
\]
and that $P$ solves the partial differential equation 
\begin{equation}\label{eq:conjugacy}
\frac{\partial}{\partial \theta}P(\theta, z_1, z_2) +
\lambda_1 z_1 \frac{\partial}{\partial z_1} P(\theta,z_1, z_2)  +
\lambda_2 z_2 \frac{\partial}{\partial z_2} P(\theta,z_1, z_2)
= f\big(P(\theta, z_1, z_2)\big).
\end{equation}
for $\theta \in \mathbb{S}$ and $z_1, z_2 \in \mathbb{D}^2$.

Then, for all $(\theta, z_1, z_2) \in \mathbb{S} \times \mathbb{D}^2$ we have that 
\begin{equation}\label{eq:conjugacyFlow}
P(\theta + t, z_1 e^{\lambda_1 t}, z_2 e^{\lambda_2 t}) = \Phi(P(\theta, z_1, z_2), t),
\end{equation}
It follows that
$\mbox{image}(P) = P(\mathbb{S}, \mathbb{D}, \mathbb{D})$ is a subset of the local 
stable manifold for $\gamma$.  
\end{lemma}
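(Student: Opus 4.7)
The plan is to verify the flow-conjugacy \eqref{eq:conjugacyFlow} by a one-line uniqueness argument, and then read off that $\mathrm{image}(P)$ lies in the local stable manifold from the contracting nature of the linear flow on $\mathcal{C}$. Fix $(\theta, z_1, z_2) \in \mathbb{S} \times \mathbb{D}^2$ and define the two curves
\begin{equation*}
u(t) \bydef P(\theta + t, \, z_1 e^{\lambda_1 t}, \, z_2 e^{\lambda_2 t}), \qquad v(t) \bydef \Phi(P(\theta, z_1, z_2), t),
\end{equation*}
both of which agree at $t=0$. I would show that each satisfies the same ODE $\dot w = f(w)$, and then appeal to uniqueness for \eqref{eq:ode}. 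For $v$ this is the definition of $\Phi$. For $u$, the chain rule yields
\begin{equation*}
\dot u(t) = \frac{\partial P}{\partial \theta} + \lambda_1 z_1 e^{\lambda_1 t}\, \frac{\partial P}{\partial z_1} + \lambda_2 z_2 e^{\lambda_2 t}\, \frac{\partial P}{\partial z_2},
\end{equation*}
where all partial derivatives are evaluated at $(\theta + t, z_1 e^{\lambda_1 t}, z_2 e^{\lambda_2 t})$. The right-hand side is precisely the left-hand side of the conjugacy PDE \eqref{eq:conjugacy} at that same point, which by hypothesis equals $f(P(\theta + t, z_1 e^{\lambda_1 t}, z_2 e^{\lambda_2 t})) = f(u(t))$. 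Hence $u$ and $v$ solve the same initial-value problem, so they agree wherever both are defined, giving \eqref{eq:conjugacyFlow}.

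For the second claim I would use forward invariance of $\mathbb{S} \times \mathbb{D}^2$ (already recorded in the bulleted list) together with the fact that $\mathrm{Re}(\lambda_j) = -\alpha < 0$. Given any $q = P(\theta, z_1, z_2) \in \mathrm{image}(P)$, the identity \eqref{eq:conjugacyFlow} shows that $\Phi(q, t)$ is defined for all $t \geq 0$ and stays inside $\mathrm{image}(P)$. Moreover, since $|z_j e^{\lambda_j t}| = |z_j| e^{-\alpha t} \to 0$, the first-order constraint $P(\theta, 0, 0) = \gamma(\theta)$ and continuity of $P$ give
\begin{equation*}
\bigl| \Phi(q, t) - \gamma(\theta + t) \bigr| = \bigl| P(\theta + t, z_1 e^{\lambda_1 t}, z_2 e^{\lambda_2 t}) - P(\theta + t, 0, 0) \bigr| \longrightarrow 0
\end{equation*}
as $t \to \infty$, so $q$ lies in the local stable manifold of $\gamma$ by definition. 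Combined with the initial condition $P(\theta, 0, 0) = \gamma(\theta)$, the image of $P$ therefore sits inside $W^s_{\mathrm{loc}}(\gamma)$.

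I do not expect any genuine obstacle here: the proof is a direct chain-rule computation packaged with ODE uniqueness. The only minor points of care are (i) ensuring the orbit $\phi(\theta, z_1, z_2, t)$ never leaves the domain of $P$, which is handled by forward invariance of $\mathcal{C}$, and (ii) noting that although $P$ is complex valued, the curve $u(t)$ evolves under the \emph{real} vector field $f$ because $f$ extends holomorphically (by real analyticity), so the uniqueness argument is valid in the complex setting and the real-image property listed above guarantees that restricting to $z_2 = \overline{z_1}$ produces real orbits of the original system.
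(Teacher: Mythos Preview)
Your proof is correct and follows essentially the same route as the paper: define $u(t)=P(\theta+t,z_1e^{\lambda_1 t},z_2e^{\lambda_2 t})$, use the chain rule together with the conjugacy PDE to see that $u$ solves $\dot u=f(u)$, invoke ODE uniqueness to obtain \eqref{eq:conjugacyFlow}, and then let $t\to\infty$ to conclude that the image of $P$ lies in the stable manifold. Your write-up is in fact slightly more careful than the paper's in explicitly naming both curves, flagging forward invariance of $\mathcal{C}$, and commenting on the complex/real issue.
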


\begin{proof}
Assume that $P$ satisfies the first order constraints and is a solution of 
Equation \eqref{eq:conjugacy}.
Choose $(\hat \theta, \hat z_1, \hat z_2) \in \mathbb{S} \times \mathbb{D}^2$
and define the function $u \colon [0, \infty) \to \mathbb{C}^d$ by 
\[
u(t) = P(\hat \theta + t, \hat z_1 e^{\lambda_1 t}, \hat z_2 e^{\lambda_2 t}).
\]
Note that $u(0) = P(\hat \theta, \hat z_1, \hat x_2)$.  We claim that $u(t)$ is a solution 
of the differential equation $u' = f(u)$.  To see this,  
let $ t \geq 0$ and define 
\[
K(t) =  
\left(\hat \theta +t, \hat z_1 e^{\lambda_1 t}, \hat z_2 e^{\lambda_2 t}\right).
\]
Note that $K(t) \in \mathbb{S} \times \mathbb{D}^2$ for all $t \geq 0$, that 
\[
u(t) = P(K(t)), 
\]
and that $u(0) = P(K(0))$.
Differentiating $u(t)$ with respect to time
leads to 
\begin{align*}
\frac{d}{dt} u(t) &= 
\frac{d}{dt} P(\hat \theta +t, \hat z_1 e^{\lambda_1 t}, \hat z_2 e^{\lambda_2 t}) \\
& = DP(\hat \theta +t, \hat z_1 e^{\lambda_1 t}, \hat z_2 e^{\lambda_2 t}) \left(
\begin{array}{c}
\frac{d}{dt} (\hat \theta + t) \\
\frac{d}{dt} (\hat z_1 e^{\lambda_1} t) \\
\frac{d}{dt}(\hat z_2 e^{\lambda_2 t} )
\end{array}
\right) \\
 &= \frac{\partial}{\partial \theta} P(K(t))
 +  \lambda_1 K_2(t)\frac{\partial}{\partial z_1} P(K(t))
 + \lambda_2 K_3(t)  \frac{\partial}{\partial z_2}P(K(t))  \\
                  &= f\big(P(K(t)\big) \\
              &   = f(u(t)).
\end{align*}
Then  
\begin{align*}
\Phi(P(\hat \theta, \hat z_1, \hat z_2, t))&= u(t)  \\
&= P(\hat \theta +t, \hat z_1 e^{\lambda_1 t}, \hat z_2 e^{\lambda_2 t}),
\end{align*}
as desired.

Moreover, from the flow conjugacy relationship and the fact that 
\[
e^{\lambda_{1,2} t} \to 0,
\]
as $t \to \infty$ we see that
\[
P(\hat \theta +t, \hat z_1 e^{\lambda_1 t}, \hat z_2 e^{\lambda_2 t}))
\to \gamma, 
\]
as $t \to \infty$.  This shows that the orbit of a point on 
the image of $P$  accumulates to $\gamma$.
\end{proof}

\begin{remark}[Dimension count] \label{rem:dimensionCount}
If it is known that $\lambda_1, \lambda_2 \in \mathbb{C}$ are
the only two stable Floquet multipliers of $\gamma$, then 
we know by the stable manifold theorem for periodic orbits
that the stable manifold of $\gamma$ is two dimensional.
It then follows that $P$ parameterizes a local stable manifold 
for $\gamma$.
\end{remark}

\begin{remark}[Unstable manifold] \label{rem:unstableManifold}
If $\lambda_1, \lambda_2 \in \mathbb{C}$ are a complex
conjugate pair of unstable Floquet multipliers for $\gamma$
and $P$ solves Equation \eqref{eq:conjugacy}, then $P$
parameterizes a subset of the unstable manifold of $\gamma$.
The argument follows exactly as above, except with time reversed.
If it is known that $\lambda_{1,2}$ are the only unstable multipliers of 
$\gamma$, then $P$ parameterizes a local unstable manifold for 
$\gamma$. 
\end{remark}

\subsubsection{Parameterization by Fourier-Taylor series}
 We seek a representation of 
 $P$ as a power series
\begin{equation}\label{eq:Ptaylorexpansion}
P(\theta, z_1, z_2)= \sum_{m = 0}^\infty \sum_{n = 0}^\infty A_{m,n}(\theta) z_1^m z_2^n, 
\end{equation}
with coefficients $A_{m,n}(\theta)$,  $T-$periodic complex functions
as from here on we assume that the bundles are orientable. (However
in the non-orientable case we simply replace $T$ by $2T$
throughout the discussion).   
We use the multi-index notation $\alpha = (m,n) \in \mathbb{Z}^2$
and $|\alpha| = m+ n$ whenever convenient.

Note that the first order coefficients are fully determined by the 
first order constraints on the parameterization method. 
That is, 
\begin{align*}
A_0(\theta) &= \gamma(\theta), ~\forall \theta \in [0,T] , ~\mbox{and} \\
A_{1,0} (\theta) &= \xi_1(\theta), \quad \quad A_{0,1}(\theta) = \xi_2(\theta),
 ~\forall \theta \in [0,T]. 
\end{align*}
Note that (at least formally) 
\[
\frac{\partial}{\partial \theta} P(\theta, z_1, z_2) = 
\sum_{m = 0}^\infty \sum_{n = 0}^\infty \frac{\partial}{\partial \theta}
A_{m,n}(\theta) z_1^m z_2^n,
\]
and that 
\[
\lambda_1 z_1 \frac{\partial}{\partial z_1} P(\theta, z_1, z_2) + 
 \lambda_2 z_2  \frac{\partial}{\partial z_2} P(\theta, z_1, z_2) = 
\sum_{m = 0}^\infty \sum_{n = 0}^\infty 
(\lambda_1 m + \lambda_2 n)A_{m,n}(\theta) z_1^m z_2^n.
\]
Let
\[
f(P(\theta, z_1, z_2)) = 
\sum_{m = 0}^\infty \sum_{n = 0}^\infty 
B_{m,n}(\theta) z_1^m z_2^n,
\]
where the $B_{m,n}$ are in fact functions of the $A_{j,k}$, for 
$0 \leq j + k \leq m+n$.  Indeed, we have that 
\begin{align*}
B_{m,n} & = [f(P(\theta, z_1, z_2))]_{m,n} \\
& = \left. \frac{1}{(m,n)!} \frac{\partial^{|m+n|}}{\partial_m \partial_n} f(\theta, z_1, z_2) \right|_{z_1 = z_2 = 0},
\end{align*}
is the $(m,n)$-th Taylor coefficient of the composition $f(P(\theta, z_1,z_2)$.

Matching like powers of $(m,n)$ leads to the ordinary differential equations 
\begin{equation}\label{eq:conjugacyalpha}
\frac{d}{d \theta}A_{m,n}(\theta) + 
(m \lambda_1 + n \lambda_2) A_{m,n}(\theta) = B_{m,n},
\end{equation}
determining the $(m,n)-th$ coefficient of $P$, for 
for $m + n \geq 2$.  Equation \eqref{eq:conjugacyalpha}
is referred to as \textit{the homological equation} for $P$.

The term $B_{m,n}$ on the right hand side of Equation \eqref{eq:conjugacyalpha}
is the $(m,n) -$th Taylor coefficient
of $f(P(\theta, z_1, z_2))$.  It is important to note that $B_{m,n}$ 
depends on $A_{m,n}$.  That is, the term $A_{m,n}$ is not isolated 
on the left hand side of Equation \eqref{eq:conjugacyalpha}.
It can be shown that this dependence is linear, and in fact that 
\[
B_{m,n} = Df(A_{0,0}) A_{m,n} + R_{m,n},  
\]
where $A_{0,0}$ is the periodic orbit
and $R_{m,n}$ depends only on the lower order terms
$A_{i,j}$ with $0 < i + j < m + n$.
Then 
\begin{equation} \label{eq:homEq_Amn_isolated}
-\frac{d}{d \theta}A_{mn}(\theta) + 
\left[  Df(A_{0,0})  - (m \lambda_1 + n \lambda_2) \mbox{Id} \right] A_{m,n}(\theta) = -R_{m,n},
\end{equation}
Note that when $m + n = 1$ this reduces Equation \eqref{eq:eigenvalueProblem}
 for the linear bundles (eigenfunctions).

An explicit formulas for $R_{m,n}$ is derived in one of two 
ways.  The first approach is to 
repeatedly evaluate derivatives of $f(P(\theta, z_1, z_2))$
with respect to $(z_1, z_2)$ and evaluate these at $(z_1, z_2) = (0,0)$.  
Expressions for partial derivatives of all orders
are worked out in a combinatorial fashion using the Fa\`{a} di Bruno formula.
This approach has the advantage of being completely general, as it is
just an application of Taylor's theorem.  However the resulting formulas are 
quite complicated, and not optimal for numerical computations.

More convenient formulas are obtained in practice by expanding
the compositions of Fourier-Taylor series directly using Cauchy products.  
This approach takes advantage of 
the structure of the system, and is especially clear in the case of 
polynomial vector fields $f$.  Non-polynomial fields, like the ones considered 
in this paper, are embedded into polynomial systems using ``automatic differentiation 
for power series''.   
Recall that in the present work we exploit the polynomial 
embedding of the CRFBP given in Equation \eqref{eq:Fwithoutalpha}.
 For a general introduction and an overview of ``polynomial 
 embeddings'' of nonlinear vector fields we refer to 
 \cite{MR1431038, MR2204531}, and also to  
 Chapter $2$ of \cite{MR3467671}, Chapter $4.7$ of \cite{MR3077153}, 
 and to \cite{MR2146523}.

\begin{remark}[Non resonance criteria]
Equation \eqref{eq:homEq_Amn_isolated} makes it clear that 
the homological equations have unique, periodic solutions as 
long as  
 \[
 m \lambda_1 + n \lambda_2  \neq \lambda_{1,2},
 \]
 with $m + n \geq 2$.   (The existence and uniqueness
 follows from Floquet theory, see \cite{MR3304254}).
 Such an equality is called an inner resonance, 
 or simply a resonance.
 Since, in the present work, the multipliers $\lambda_1, \lambda_2$ are complex
 conjugates, there are no possible resonances, and the homological equators
 are solvable to all orders.
 That is, in the case of 
 a single pair of stable or unstable complex conjugate
 multipliers, the formal series solution of Equation \eqref{eq:conjugacy} always exists.  
 \end{remark}

\subsection{Banach Algebras of Fourier Sequences} \label{sec:BanachAlgebra}
Recall that the Fourier coefficients of a real analytic periodic function decay exponentially 
(Paley-Wiener Theorem).  More precisely, let $g:[0,T]\to R$ be a $T-$periodic, real analytic function, 
and $b = \{ b_k : k\in \mathbb{Z} \}$ represent the sequence of Fourier coefficients, 
so that
\[
g(t) = \sum_{k\in \mathbb{Z}} b_{k}e^{\im \frac{2\pi}{T}t}.
\]
Then there exists a constant $\nu \geq 1$ such that
\[
\sum_{k\in \mathbb{Z}} |b_k|\nu^{|k|} < \infty.
\]
This motivates our interest in the following Banach 
space of bi-infinite sequences.

\begin{definition}[Weighted Space of Fourier Coefficients]

Let $b=\{b_k\}_{k\in\mathbb{Z}}$.  We say that $b \in \ellnu$ if
\[
\|b\|_{1,\nu} \bydef \sum_{k\in \mathbb{Z}} |b_k|\nu^{|k|} < \infty,
\]
and note that $\ellnu$ is a Banach algebra under the convolution 
product $\star:\ellnu \times \ellnu \to \ellnu$ defined by  
\[
(a\star b)_k = \sum_{\substack{k_1+k_2=k \\ k_1,k_2 \in \mathbb{Z}}} a_{k_1}b_{k_2}.
\]
That is, we have that 
\[
\| a \star b \|_{1, \nu} \leq \| a \|_{1, \nu} \, \| b \|_{1, \nu}. 
\]
Endow the Cartesian product 
\[
x= (x^1, \hdots, x^n) \in \ellnu \times \hdots \times \ellnu
\]
 with the norm
\[
\| x \|_{(\ellnu)^n}= \max_{1\leq i \leq n} \| x^i \|_{1,\nu}.
\]
\end{definition}

The following space also plays an important role.  
\begin{definition}
Let $b=\{b_k\}_{k\in\mathbb{Z}}$. We say that $b \in \ell_{\nu^{-1}}^\infty$ if
\[
\|b\|_{\infty,\nu^{-1}} \bydef \sup_{k\in \mathbb{Z}} \frac{|b_k|}{\nu^{|k|}} < \infty.
\]
A straightforward computation shows that 
$ \ell_{\nu^{-1}}^\infty$ is isomorphic to the dual space of $\ellnu$,  denoted by $(\ellnu)^*$.
\end{definition}
The space $\ell_{\nu^{-1}}^\infty$ is useful for obtaining bounds on certain linear 
operators, including the convolution product. 
Note that for any $a,b \in \ellnu$, we have that 
 $ a \in \ell_{\nu^{-1}}^\infty$ and
\[
\left| (a \star b)_k\right| \leq \|b\|_{1,\nu} \cdot \sup_{i \in \mathbb{Z}} \frac{|a_i|}{\nu^{|k-i|}} < \infty.
\]

Another important remark is that the $\ellnu$ norm of the 
Fourier coefficients provides an upper bound on
 the $C^0([0,T])$ norm. To see this, let
\[
g(t) = \sum_{k\in \mathbb{Z}} a_k e^{\im \frac{2\pi}{T}t} \quad \mbox{and} \quad h(t)= \sum_{k\in \mathbb{Z}} b_k e^{\im \frac{2\pi}{T}t},
\]
have $a, b \in \ell_\nu^1$
with $\|a-b\|_{1,\nu}<r$.  We have 
\begin{align*}
\|g-h\|_\infty = \sup_{t\in [0,T]} |g(t)-h(t)| &\leq \sum_{k\in\mathbb{Z}} \sup_{t\in [0,T]} \left| (a_k-b_k)e^{\im \frac{2\pi}{T}t} \right| \\
               &\leq \sum_{k\in\mathbb{Z}} \left| (a_k-b_k) \right| \\
               &\leq \sum_{k\in\mathbb{Z}} \left| (a_k-b_k) \right|\nu^{|k|} \\
               &= \|a-b\|_{1,\nu} < r
\end{align*}

\begin{remark}[Symmetry in sequence space]
If $f$ is real valued and $\{a_k\}_{k \in \mathbb{Z}}$ are the Fourier coefficients, then 
\[
a_{k} = \bar{a}_{-k},
\]
for all $k \in \mathbb{Z}$. In particular, the coefficient is real when $k=0$. This symmetry can be used to reduce computation times in Theorem \ref{thm:ContractionMapping}, 
and to increase the accuracy of a given finite dimensional approximation.
\end{remark}

\begin{remark}[Products of higher dimensions]
Let $f,g,h:\mathbb{T} \to \R$ be analytic periodic functions with the same period and Fourier expansion given respectively by the sequences $a,b,c \in \ellnu$. We note that their product can be computed using the Cauchy product again and that it is associative. It satisfies
\[
f(t)g(t)h(t) = \sum_{k\in \mathbb{Z}} \left[ (a \star b)\star c \right]_k e^{\im \frac{2\pi}{T}t}= \sum_{k\in \mathbb{Z}} \left[ a \star (b\star c) \right]_k e^{\im \frac{2\pi}{T}t} = \sum_{k\in \mathbb{Z}} \left( a \star b\star c \right)_k e^{\im \frac{2\pi}{T}t}.
\]
This remark can be extended to any higher degree product with a recursive application of the Cauchy product. So that it is possible to express any polynomial combination of Fourier expansions. 
\end{remark}

The following projections and needed for numerical applications.  

\begin{definition}
For $n,M \in \mathbb{N}$, define $\pi_M^n : (\ellnu)^n \to \mathbb{C}^{n(2M-1)}$ to be
the truncation of the components of an $n$-vector of bi-infinite sequences,
each to $M$ modes. 
For example, with $b \in \ellnu$, we have
\[
\pi_M^1(b) = (b_{-M+1},b_{-M+2},\hdots, b_{M-2},b_{M-1}). 
\]
Define also the inclusion maps $\iota_M^n: \mathbb{C}^{n(2M-1)} \to (\ellnu)^n$
so that $\pi_M^n \left(\iota_M^N(c)\right)= c$ for all $c \in \mathbb{C}^{n(2M-1)}$. 
\end{definition}

%
%
%
%

\subsection{Banach Algebras of Fourier-Taylor Sequences} 
We will approximate stable/unstable manifolds attached to periodic 
solutions using functions described by Fourier series in the periodic 
variable and by Taylor series in the stable or unstable variables.  
As above, we work in coefficient space.

\begin{definition}\label{def:FourierTaylorSpace}
Let $\alpha = (\alpha_1,\alpha_2) \in \mathbb{N}^2$, and $\sigma \in \mathbb{D}^2$. We set 
\[
|\alpha|= \alpha_1 +\alpha_2, \quad\mbox{and} \quad \sigma^\alpha = \sigma_1^{\alpha_1}\sigma_2^{\alpha_2}.
\]
We consider the Banach spaces
$X_2 =\{ x_\alpha \in \ellnu : \alpha \in \mathbb{N}^{2},  \mbox{ and } \|x\|_{X_2} < \infty \}$, 
where 
\begin{equation}\label{def:normX}
\|x\|_{X_2} \bydef \sum_{|\alpha|=0}^\infty \|x_\alpha\|_{1,\nu} < \infty.
\end{equation}
and the dual space $X^*$ with the norm
\[
\|x\|_{X_2^*} \bydef \sup_{\alpha \in \mathbb{N}^{2}} \|x_\alpha\|_{\infty,\frac{1}{\nu}}.
\]
Again, we can take Cartesian products of such spaces and endow 
$\left(X_2\right)^n$  with the norm 
\[
\|x\|_{(X_2)^n} = \max_{i=1,\hdots, n} \| x^i\|_{X_2}.
\]
\end{definition}

An element $a \in X_2$ is identified with the 
Fourier-Taylor coefficients of an analytic 
function  $g:[0,T]\times \mathbb{D}^2 \to \mathbb{C}$
periodic in its first variable 
 given by
\begin{equation}\label{eq:gDoubleSum}
g(t,\sigma)= \sum_{|\alpha|=0}^\infty \sum_{k \in \mathbb{Z}} a_{\alpha,k}e^{\im \frac{2\pi}{T}kt}\sigma^\alpha.
\end{equation}
Note that finiteness of the $X_2$ norm of $a$ implies that 
$g$ is analytic (and periodic) in $t$, and analytic in $\sigma$ on the 
unit disk $\mathbb{D}^2$.

Again, we have that $X_2$ is a Banach algebra with the Cauchy-Convolution 
product $\ast:X_2\times X_2 \to X_2$ defined below.
\begin{definition}[Cauchy-Convolution product]\label{def:product}
Let $b,c \in X_2$.  The Cauchy-Convolution, denoted by $\ast$, is given by 
\[
(b\ast c)_{\alpha,k} = \sum_{\substack{ \alpha_1, \alpha_2 \in \mathbb{N}^2  \\ \alpha_1 +\alpha_2 = \alpha}} (b_{\alpha_1} \star c_{\alpha_2})_k.
\]
\end{definition}
Again, the norm $\|\cdot\|_{X_2}$ bounds the  $C^0([0,T]\times \mathbb{D}^2)$ norm
%
following the same steps as in the $\ellnu$ case. 
Moreover, we have the following useful estimate.
\begin{proposition}[Norm estimates]\label{EstimatesBound}
Let $a,b \in X_2$. Then 
\begin{enumerate}
\item For all $k \in \mathbb{Z}$ and $\alpha \in \mathbb{N}^2$,
\[
\left| (a\ast b)_{\alpha,k} \right| \leq \|b\|_{X_2} \cdot \max_{0\leq |\beta|\leq |\alpha |}\sup_{i \in \mathbb{Z}} \frac{|a_{\beta,k-i}|}{\nu^{|i|}}.
\]
\item $\| a \ast b \|_{X_2} \leq  \| a \|_{X_2} \cdot  \| b \|_{X_2}$.
\end{enumerate}
\end{proposition}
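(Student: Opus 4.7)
The plan is to reduce both estimates to the corresponding facts in $\ellnu$ (where convolution is a Banach algebra and where the dual pairing with $\ell_{\nu^{-1}}^\infty$ gives the pointwise bound stated in Section \ref{sec:BanachAlgebra}), and then to sum/maximize over the Taylor multi-indices using the definition of the Cauchy--Convolution product.

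For part (2), I would first unpack the $X_2$ norm of the product by exchanging the two summations. Writing
\[
\|a\ast b\|_{X_2} = \sum_{\alpha\in\mathbb{N}^2}\|(a\ast b)_\alpha\|_{1,\nu}
\leq \sum_{\alpha}\sum_{\alpha_1+\alpha_2=\alpha}\|a_{\alpha_1}\star b_{\alpha_2}\|_{1,\nu},
\]
by the triangle inequality and Definition \ref{def:product}. Applying the $\ellnu$ Banach algebra inequality to each term bounds this by $\sum_\alpha\sum_{\alpha_1+\alpha_2=\alpha}\|a_{\alpha_1}\|_{1,\nu}\|b_{\alpha_2}\|_{1,\nu}$. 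The final step is to re-index the double sum as an unrestricted product over $(\alpha_1,\alpha_2)\in\mathbb{N}^2\times\mathbb{N}^2$, which factors as $\|a\|_{X_2}\|b\|_{X_2}$; this is just Fubini on the nonnegative double series. Nothing here is delicate.

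For part (1), the key idea is the same weight-splitting trick used after Definition of $\ell_{\nu^{-1}}^\infty$. Expanding the Cauchy--Convolution product,
\[
|(a\ast b)_{\alpha,k}| \leq \sum_{\alpha_1+\alpha_2=\alpha}\sum_{i\in\mathbb{Z}} |a_{\alpha_1,k-i}|\,|b_{\alpha_2,i}|
= \sum_{\alpha_1+\alpha_2=\alpha}\sum_{i\in\mathbb{Z}} \frac{|a_{\alpha_1,k-i}|}{\nu^{|i|}}\,\nu^{|i|}|b_{\alpha_2,i}|.
\]
I would then pull out the supremum, noting that the constraint $\alpha_1+\alpha_2=\alpha$ in $\mathbb{N}^2$ forces $|\alpha_1|\leq |\alpha|$, so
\[
|(a\ast b)_{\alpha,k}| \leq \Big(\max_{0\leq |\beta|\leq |\alpha|}\sup_{i\in\mathbb{Z}}\tfrac{|a_{\beta,k-i}|}{\nu^{|i|}}\Big) \sum_{\alpha_1+\alpha_2=\alpha}\|b_{\alpha_2}\|_{1,\nu}.
\]
The remaining double sum is dominated by $\sum_{|\gamma|\leq |\alpha|}\|b_\gamma\|_{1,\nu}\leq \|b\|_{X_2}$, giving the claimed inequality.

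I do not expect any real obstacle: both items are bookkeeping atop the $\ellnu$ inequalities that have already been recorded. The only point to be careful about is ensuring the re-indexing of the sums over $\alpha_1+\alpha_2=\alpha$ is handled cleanly so that one really gets the $\|b\|_{X_2}$ (not something weaker) on the right-hand side of (1), and the product of norms (not a restricted sub-sum) in (2).
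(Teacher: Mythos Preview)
Your argument is correct. The paper states Proposition~\ref{EstimatesBound} without proof, so there is nothing to compare against; your derivation---reducing to the $\ellnu$ Banach algebra inequality and the $\ell_{\nu^{-1}}^\infty$ dual bound, then summing over the Taylor indices---is exactly the natural proof and goes through without issue.
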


\begin{remark}\label{remark:rescale}
The choice of $\sigma$ in the unit disk is to keep numerical stability in the computation of the norms. 
This is done without loss of generality as it is always possible to change the scale of the Taylor coefficients 
to obtain a radius of convergence of $1$.  When computing stable/unstable manifolds
attached to periodic orbits using 
the parameterization method, this is equivalent to choosing the scale of the 
stable/unstable eigenfunctions.
\end{remark}

\begin{definition}[The CRFBP in Fourier-Taylor space]\label{def:FinSpace}
Computing the coefficients of a parameterized manifold of the CRFBP requires to rewrite $F:\mathbb{R}^9 \to \mathbb{R}^9$, as defined in equation \eqref{eq:Fwithoutalpha}, in the appropriate space. For all $\alpha \in \mathbb{N}^2$ and $a \in \left(X_2\right)^9$, we set
\begin{equation*}
\small
\mathcal{F}_\alpha(a) =\begin{pmatrix}
  a_\alpha^{2}  \\
2a_\alpha^{4} +a_\alpha^{1} -
\displaystyle\sum_{i=1}^3 m_i\left(a^{1i}\star a^{6+i}\star a^{6+i} \star a^{6+i}\right)_\alpha \\ 
a_\alpha^{4} \\
-2a_\alpha^{2} +a_\alpha^{3} -
\displaystyle\sum_{i=1}^3 m_i\left(a^{3i}\star a^{6+i}\star a^{6+i} \star a^{6+i}\right)_\alpha \\ 
a_\alpha^{6} \\
 -\displaystyle \sum_{i=1}^3 m_i\left(a^{5i}\star a^{6+i}\star a^{6+i} \star a^{6+i}\right)_\alpha \\  \\
-\left(a^{11}\star a^{2}\star a^{7}\ast a^{7}\star a^{7}\right)_\alpha
-\left(a^{31}\star a^{4}\star a^{7}\star a^{7}\star a^{7}\right)_\alpha 
-\left(a^{51}\star a^{6}\star a^{7}\star a^{7}\star a^{7}\right)_\alpha \\
-\left(a^{12}\star a^{2}\star a^{8}\ast a^{8}\star a^{8}\right)_\alpha 
-\left(a^{32}\star a^{4}\star a^{8}\star a^{8}\star a^{8}\right)_\alpha
 -\left(a^{52}\star a^{6}\star a^{8}\star a^{8}\star a^{8}\right)_\alpha \\
-\left(a^{13}\star a^{2}\star a^{9}\ast a^{9}\star a^{9}\right)_\alpha
 -\left(a^{33}\star a^{4}\star a^{9}\star a^{9}\star a^{9}\right)_\alpha
  -\left(a^{53}\star a^{6}\star a^{9}\star a^{9}\star a^{9}\right)_\alpha \\
 \end{pmatrix}.
\end{equation*}
Each $a^{ij} \in X_2$ denotes the Fourier-Taylor expansion of the difference between $a^i$, the $i$-th coordinate of $a$, and the respective coordinate of the $j-$th primary. This notation does not define any additional variable, but it helps reducing the amount of convolution products in the presentation and in every computation.
\end{definition}

\subsection{A-posteriori validation: Newton-like operators} \label{sec:a-posteriori}

Suppose $X$ is a Banach space, $c\in X$ and $r>0$.  Then 
\[
B_r(c) \bydef \{ x \in X : \|x- c \|_X < r \}
\] 
denotes the ball of radius $r$ centered at the element $c$. The case of the unit disk, $c=0$ and $r=1$, will be denoted by $\mathbb{D}_X$. 
Let $\mathcal{B}(X,Y)$ denote the Banach algebra of bounded linear operators
from $X$ into $Y$, endowed with usual operator norm
\[
\|T\|_{\mathcal{B}(X,Y)} = \sup_{\substack{x \in X \\ \|x\|_X =1}} \| T(x) \|_Y ,
\]
for $T \in \mathcal{B}(X,Y)$. In the case $X=Y$, we write $\mathcal{B}(X)$.
The following is the basis of all our a-posteriori analysis.

\begin{theorem}\label{thm:ContractionMappingFixedPt}
Suppose that $X$ is a Banach space, and that $T:X \to X$ is a Fr\'{e}chet differentiable mapping. 
Assume that $Y$ is a positive constant and $Z:(0,r_*) \to [0,\infty)$  is a non-negative function satisfying
\begin{align*}
 \| T(\bar x) - \bar x \|_X &\leq Y \\
\sup_{x \in \overline{B_r(\bar x)}} \left\| DT(x) \right\|_{\mathcal{B}(X)} &\leq Z(r), \quad \mbox{for all} ~r \in (0,r_*) \\
\end{align*}
Define the polynomial
\[
P(r)= Z(r) -r +Y
\]
If there exists $r_0\geq 0$ such that $P(r_0)<0$ and $r_0 < r_*$, then there exists a unique $\tilde x \in B_r(\bar x) $ such that $T(\tilde x)= \tilde x$.  
\end{theorem}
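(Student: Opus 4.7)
The plan is to apply the Banach contraction mapping principle to $T$ restricted to a suitable closed ball around $\bar x$. Since the closed ball $\overline{B_{r_0}(\bar x)}$ is complete in the metric inherited from $X$, it suffices to verify that $T$ maps this ball into itself and that $T$ is a strict contraction on it. Both of these properties will be extracted from the two hypotheses together with the single polynomial inequality $P(r_0) < 0$.

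For the self-mapping property, pick any $x \in \overline{B_{r_0}(\bar x)}$ and decompose $T(x) - \bar x = \bigl(T(x) - T(\bar x)\bigr) + \bigl(T(\bar x) - \bar x\bigr)$. The defect is controlled by the first hypothesis, giving $\|T(\bar x) - \bar x\|_X \leq Y$. For the increment, I would apply the mean-value inequality along the segment joining $\bar x$ to $x$, which lies inside $\overline{B_{r_0}(\bar x)}$, and bound the supremum of $\|DT\|_{\mathcal{B}(X)}$ on that segment by $Z(r_0)$. Summing the two bounds and invoking $P(r_0) < 0$ then yields $\|T(x) - \bar x\|_X \leq r_0$, so $T$ leaves the ball invariant.

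For the contraction property, the same mean-value argument applied to any two points $x, y \in \overline{B_{r_0}(\bar x)}$ gives the Lipschitz estimate $\|T(x) - T(y)\|_X \leq Z(r_0) \|x - y\|_X$, with $Z(r_0) < 1$ (forced by the inequality $P(r_0) < 0$ together with $Y > 0$). Banach's fixed point theorem then produces a unique $\tilde x \in \overline{B_{r_0}(\bar x)}$ with $T(\tilde x) = \tilde x$, and the strict inequality $P(r_0) < 0$ places $\tilde x$ in the open ball $B_{r_0}(\bar x)$ as claimed. The only step requiring care is the bookkeeping that ties the single scalar inequality $P(r_0)<0$ simultaneously to the two quantitative requirements of Banach's theorem (self-map plus Lipschitz constant strictly below one); there is no deeper obstacle, since this theorem is essentially a convenient wrapper around the contraction mapping principle, tailored to the Newton--Kantorovich bounds $Y$ and $Z(r)$ that will be computed elsewhere in the paper.
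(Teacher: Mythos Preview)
The paper does not actually prove this theorem; it is stated as a standard auxiliary tool (the companion Theorem~\ref{thm:ContractionMapping} is likewise stated without proof, with a pointer to \cite{MR3612178}). Your argument via the Banach contraction mapping principle is the standard one and is exactly the intended route.

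That said, there is a small bookkeeping mismatch you have glossed over. With $P(r) = Z(r) - r + Y$ as literally written and $Z(r)$ a bound on the \emph{operator norm} $\|DT(x)\|_{\mathcal{B}(X)}$, the mean-value estimate gives $\|T(x) - \bar x\|_X \leq Z(r_0)\,r_0 + Y$, and $P(r_0) < 0$ (i.e.\ $Z(r_0) + Y < r_0$) does not in general imply $Z(r_0)\,r_0 + Y \le r_0$; nor does it force $Z(r_0) < 1$ unless one also knows $r_0 \leq 1$. The statement evidently contains a typo: either $P(r)$ should read $Z(r)\,r - r + Y$, or $Z(r)$ should instead be a bound on $\sup_{x \in \overline{B_r(\bar x)}}\|DT(x)h\|_X$ over $\|h\|_X \le r$ (which is how the paper actually uses the theorem in Section~\ref{sec:TailArgument}, where the computed bound takes the form $Z_1 r + O(r^2)$). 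With either correction your proof goes through verbatim and both the self-map and contraction conditions follow cleanly from $P(r_0)<0$; it would be worth flagging the discrepancy rather than silently absorbing it.
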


The following is the basis of our a-posteriori analysis for periodic orbits and 
two point BVPs.

%
%
%
%
\begin{theorem}\label{thm:ContractionMapping}
Suppose that $X,Y$ are Banach spaces. Suppose moreover, that $F:X \to Y$ is a Fr\'{e}chet differentiable mapping. Suppose that $\bar x \in X$, $A^\dagger \in \mathcal{B}(X,Y)$, and $A \in \mathcal{B}(Y,X)$ with $A$ injective. Assume that $Y,~ Z_0,~Z_1$ are positive constants and $Z_2:(0,\infty) \to [0,\infty)$  is a non-negative function satisfying
\begin{align*}
 \| AF(\bar x) \|_X &\leq Y \\
\left\| \mbox{Id}_X -AA^\dagger \right\|_{\mathcal{B}(X)} &\leq Z_0 \\
\left\| A(DF(\bar x) - A^\dagger )  \right\|_{\mathcal{B}(X)} &\leq Z_1 \\
\left\| A(DF(c) -DF(\bar x)) \right\|_{\mathcal{B}(X)} &\leq Z_2(r)r, \quad \mbox{for all}~ c \in \overline{B_r(\bar x)}~\mbox{and} ~r>0
\end{align*}
Define the polynomial
\[
P(r)= Z_2(r)r^2 +(Z_0 +Z_1 -1)r +Y
\]
If there exists $r_0\geq 0$ such that $P(r_0)<0$, then there exists a unique $\tilde x \in B_r(\bar x) $ such that $F(\tilde x)= 0$.  Moreover, it follows that $DF(\tilde x)$ is invertible.  
\end{theorem}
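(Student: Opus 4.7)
My plan is to reformulate $F(x)=0$ as a fixed point problem for the Newton-like operator $T\colon X\to X$ defined by $T(x)\bydef x-AF(x)$, apply the contraction mapping result of Theorem \ref{thm:ContractionMappingFixedPt}, and then upgrade the resulting fixed point to a genuine zero of $F$ using the injectivity of $A$. Since $A\in\mathcal{B}(Y,X)$ and $F$ is Fr\'{e}chet differentiable, $T$ is Fr\'{e}chet differentiable with $DT(x)=\mathrm{Id}_X-A\,DF(x)$, and one has the elementary identity $T(\bar x)-\bar x=-AF(\bar x)$, which yields $\|T(\bar x)-\bar x\|_X\leq Y$ directly from the first hypothesis.

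The key algebraic manipulation is to add and subtract $AA^\dagger$ and $A\,DF(\bar x)$ inside $DT(x)$, splitting it into exactly the three pieces controlled by the hypotheses, namely
\[
DT(x)=(\mathrm{Id}_X-AA^\dagger)+A\bigl(A^\dagger-DF(\bar x)\bigr)+A\bigl(DF(\bar x)-DF(x)\bigr).
\]
For any $r>0$ and $x\in\overline{B_r(\bar x)}$, the triangle inequality together with the three $Z$-bounds then gives $\|DT(x)\|_{\mathcal{B}(X)}\leq Z_0+Z_1+Z_2(r)\,r$. Denote this majorant by $Z(r)$. A short rearrangement shows that the assumption $P(r_0)<0$ is exactly equivalent to $Z(r_0)\,r_0+Y<r_0$, and dividing through by $r_0$ (which is necessarily positive, since $P(0)=Y\geq 0$) yields $Z(r_0)<1-Y/r_0\leq 1$. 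Applying the mean value inequality on the convex ball $\overline{B_{r_0}(\bar x)}$ shows that $T$ maps this ball into itself and is a contraction with Lipschitz constant $Z(r_0)<1$, so Theorem \ref{thm:ContractionMappingFixedPt} produces a unique fixed point $\tilde x\in B_{r_0}(\bar x)$. Since $T(\tilde x)=\tilde x$ is equivalent to $AF(\tilde x)=0$, the injectivity of $A$ then forces $F(\tilde x)=0$.

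It remains to verify the final claim that $DF(\tilde x)$ is invertible, and this is the step I expect to be the most delicate, since $A$ acts between two different Banach spaces so that a single Neumann argument does not suffice. The identity $A\,DF(\tilde x)=\mathrm{Id}_X-DT(\tilde x)$ together with $\|DT(\tilde x)\|_{\mathcal{B}(X)}\leq Z(r_0)<1$ gives, via a Neumann series, that $A\,DF(\tilde x)\in\mathcal{B}(X)$ is invertible; a parallel Neumann argument based on $\|\mathrm{Id}_X-AA^\dagger\|_{\mathcal{B}(X)}\leq Z_0<1$ shows that $AA^\dagger$ is invertible in $\mathcal{B}(X)$. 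The latter forces $A\colon Y\to X$ to be surjective, and combined with the hypothesized injectivity of $A$ the open mapping theorem endows $A$ with a bounded two-sided inverse. Writing $DF(\tilde x)=A^{-1}\bigl(A\,DF(\tilde x)\bigr)$ then exhibits $DF(\tilde x)$ as a composition of invertible bounded linear operators, which completes the proof.
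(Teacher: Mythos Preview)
The paper does not give its own proof of this theorem; immediately after the statement it refers the reader to \cite{MR3612178}. Your argument---reducing to a fixed point of $T(x)=x-AF(x)$, splitting $DT(x)$ into the three pieces controlled by $Z_0$, $Z_1$, $Z_2$, checking that $P(r_0)<0$ is equivalent to $Z(r_0)r_0+Y<r_0$ so that $T$ is a contractive self-map of $\overline{B_{r_0}(\bar x)}$, and then handling the invertibility of $DF(\tilde x)$ via Neumann series on $A\,DF(\tilde x)$ and $AA^\dagger$ together with the open mapping theorem---is exactly the standard Newton--Kantorovich route found in that reference, and is correct.
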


The reader interested in the proof is referred to \cite{MR3612178}. 
In practice, the point $\bar x$ is a finite dimensional approximate zero of $F$
obtained numerically, while $A^\dagger$ is an eventually diagonal operator 
approximating $DF(\bar x)$, and  $A$ is an approximate inverse.

\section{Functional analytic set up for homological equations} \label{sec:lowOrderTerms}
Computer assisted existence proofs for the periodic orbit, the eigenfunctions and 
Floquet multipliers, solutions of the homological equations, and the solution of the 
two point boundary value problem for the connecting orbit segment exploit 
computer assisted Fourier and Chebyshev analysis which is 
explained in great detail in a number of references.  The works of 
\cite{HLM,Ransford,MR3612178,MR3896998,MR3353132}
are especially relevant to the present discussion.  

Since the technicalities have been discussed in many places, our main goal is
to describe the set up of the 
appropriate zero finding problems, with all necessary phase conditions,
unfolding parameters, and optimizations.  In particular, some technical 
discussion of the numerical solutions of the homological equation is needed.  
After this, the computer assisted proofs go through using the techniques of the 
references just mentioned.  
On the other hand, the tail bounds for the Fourier-Taylor expansions of 
the stable/unstable manifold parameterizations are 
more novel and are discussed in the next Section.

\subsection{Zeroth order: nonlinear equation for the periodic orbit} \label{sec:PO}
We now define a zero finding problem which isolates a unique periodic 
solution of the CRFBP.  This requires four phase conditions. First, a Poincar\'{e} condition to eliminate 
non-uniqueness due to the fact that the shift of a periodic orbit is again a periodic orbit. 
Second, three additional phase conditions which insure that the 
automatic differentiation leads to the right nonlinearity.  That is, these conditions 
impose the initial conditions associated with the appended ODEs describing the nonlinearities.   
Both conditions are enclosed in the 
function $\eta : (\ellnu)^9 \to \R^4$ given by 
\begin{equation}\label{def:Eta}
\eta(a_0) = \begin{pmatrix}
\langle u_1, u_0 -\gamma(0) \rangle \\
\left( \left(\gamma^1(0) -x_1\right)^2 + \left(\gamma^3(0)-y_1\right)^2 + \left(\gamma^5(0)-z_1\right)^2 \right)\left(\gamma^7(0)\right)^2 -1 \\
\left( \left(\gamma^1(0) -x_2\right)^2 + \left(\gamma^3(0)-y_2\right)^2 + \left(\gamma^5(0)-z_2\right)^2 \right)\left(\gamma^8(0)\right)^2 -1 \\
\left( \left(\gamma^1(0) -x_3\right)^2 + \left(\gamma^3(0)-y_3\right)^2 + \left(\gamma^5(0)-z_3\right)^2 \right)\left(\gamma^9(0)\right)^2 -1 \\
\end{pmatrix}.
\end{equation}
Note that the first entry is a Poincar\'{e} condition depending on 
$u_0 \in \mathbb{R}^9$, and $u_1= F(u_0)$.  Projecting onto Fourier coefficients,  we have 
\[
\gamma(0) = \displaystyle \sum_{k \in \mathbb{Z}} a_{0,k}.
\]
Each of these conditions is balanced by the addition of an unfolding 
parameter
$y= (y_1,y_2,y_3,y_4)$, incorporated into the vector field 
by the function $G: \mathbb{C}^4 \times (\ellnu)^9 \mapsto (\ellnu)^9$
given by
\[
(G(y,a_0))_k = \begin{pmatrix}
0 \\
y_1 a_{0,k}^2 \\
0 \\
0 \\ 
0 \\
0 \\
y_2 \cdot (a_0^7 \star a_0^7 \star a_0^7)_k \\
y_3 \cdot (a_0^8 \star a_0^8 \star a_0^8)_k \\
y_4 \cdot (a_0^9 \star a_0^9 \star a_0^9)_k
\end{pmatrix},~\mbox{for all}~ k \in \mathbb{Z}.
\]
We use $g$ to define the family of vector field $f_y(u)= f(u) +g(y,u)$.
One can prove that $f_y$ admits a solution if and only if $y= (0,0,0,0)$.
That is, if we prove that $f_y$ has a zero, then we obtain a-posteriori that 
the unfolding parameters are zero.  A proof of this fact is found in 
\cite{jpHalo}.
The appropriate zero finding operator is then 
\[
F_0^i(y,a_0) \bydef \begin{cases}
 \eta^i(a_0), &~\mbox{for}~ 1\leq i \leq 4,\\
\left\{ -\im \omega k a_{0,k}^{i-4} + \mathcal{F}_{0,k}^{i-4}(a_0)  + G_k^{i-4}(y,a_0) : k \in \mathbb{Z} \right\}, & ~\mbox{for}~ 5\leq i \leq 13,
\end{cases}
\]
where the first four coordinates are scalars 
and the last nine are elements of $\ell_{\nu'}^1$. Moreover, $\mathcal{F}$ denotes the evaluation of the vector field at a nine dimensional Fourier or Fourier-Taylor expansion, as given in definition \ref{def:FinSpace}.

We now define finite dimensional projection.  Let $K$ denote the truncation size of 
the Fourier expansion. This value is chosen so that all the Fourier coefficients 
ignored by the approximation are expected to be below machine precision. Then 
we compute, using a Newton scheme, 
$(\bar y, \bar a_0) \in \mathbb{C}^4 \times \mathbb{C}^{18K-9}$
an approximate zero of the truncated operator. 
This requires the projection $\pi_K^1$ of each of the infinite dimensional 
coordinate of $F_0$.  We denote by $\bar F_0$ such truncation. 
So
$\overline{F}_0 : \mathbb{C}^4 \times \mathbb{C}^{18K-9} \to \mathbb{C}^4 \times 
\mathbb{C}^{18K-9}$ and $ \overline{\mathcal{F}}_0(\bar y, \bar a_0) \approx 0$. 

Next we define $A^\dagger$ and $A$, two eventually diagonal operator approximating the derivative
$DF(\bar y, \bar a_0)$ and its inverse respectively, as in \cite{jpHalo}. 
Their exact definition is omitted in this case 
but is similar to the approach discussed in Section \ref{Sec:Chebyshev}.

We then apply Theorem \ref{thm:ContractionMapping} to obtain $r_0>0$ such that
\[
\|A_0 - \bar A_0 \|_\infty \leq \| a_0 - \bar a_0 \|_{(\ellnu)^9} < r_0.
\]
The definition of the operator is specific to the case $\alpha=0$, since it is only 
at this stage of the proof where the phase condition $\eta$ is necessary. 
Nevertheless, the other cases are treated using the same approach. 

\begin{remark}[Energy level of the periodic orbit $\gamma$]
Note that, because we are working with a system which preserves energy, 
we fix $\omega$ and then we solve for a periodic orbit with this frequency.
This exploits the fact that periodic solutions appear in one parameter families
parameterized by energy/frequency when we have conserved quantities.  
\end{remark}

\subsection{Eigenfunctions and multipliers: an almost linear system}

We now expand on the case $|\alpha|=1$, and solve for the 
Floquet multipliers and their associated eigenfunctions.  
In this case, solutions are unique after fixing the magnitude 
of the eigenfunction.  This introduces a phase condition fixing the 
value of the eigenfunction at $t = 0$.  
Choose $\xi_0 \in \mathbb{R}$ the desired initial value,
and $k_0$ the number of Fourier coefficients desired to approximate the initial value.
Set 
\begin{align}\label{eq:PhaseBundle}
\xi(a_\alpha) = \sum_{j=1}^9 \left( \sum_{|k|<k_0} a_{\alpha,k}^j\right)^2 -\xi_0.
\end{align} 
Note that $\xi : (\ellnu)^9 \to \mathbb{C}$ does not enforce that the magnitude of the 
eigenfunction is exactly $\xi_0$, but that this relaxed condition suffices to obtain uniqueness. 
We now define 
$F_\alpha : \mathbb{C} \times (\ellnu)^9 \mapsto  \mathbb{C} \times (\ell_{\nu'}^1)^9$, 
for $|\alpha|=1$, coordinate-wise by 
\[
F_\alpha^i(\lambda,a_{\alpha}) = \begin{cases}
\xi(a_\alpha), & \mbox{for}~ i=1, \\
\left\{ (-\im \omega k  -\lambda)a_{\alpha,k}^{i-1} + \mathcal{F}_{\alpha,k}^{i-1}(a)  : k \in \mathbb{Z} \right\} , &\mbox{for}~ 2\leq i \leq 10.
\end{cases}
\]

Note that, by definition of the Cauchy-Convolution product, the computation of $F_\alpha$ requires the coefficients of the periodic orbit $a_0$.
Such elements of smaller order are assumed to be fully known at this stage of the computation thanks to the zero-th order
validation.  In the case of the CRFBP, since we are interested in 
the case of complex conjugate Floquet multipliers, only one computation is required at this stage thanks to the symmetry
\begin{align*}
\lambda_1 &= \mbox{conj}(\lambda_2) \\
A_{1,0}(t) &= \mbox{conj}\left(A_{0,1}(t) \right).
\end{align*}

Again, defining the approximate derivative and approximate inverse for Theorem \ref{thm:ContractionMapping} is as in \cite{jpHalo}. 
We omit the details, and assume that the procedure is
applied successfully for all $|\alpha|=1$ to obtain $r_\alpha>0$ such that
\begin{equation}\label{eq:RalphaSupBound}
\|A_\alpha - \bar A_\alpha \|_\infty \leq \| a_\alpha - \bar a_\alpha \|_{(\ellnu)^9} < r_\alpha.
\end{equation}

\subsection{Order $2$ through $N_t$: non-constant coefficient inhomogeneous linear equations}

We now seek validated bounds on the approximate solutions of the 
homological equators defining the coefficients $a_\alpha$ with $2\leq |\alpha| \leq N_t$. 
Since these are inhomogeneous linear equations, and since we seek 
purely periodic solutions, this requires no additional phase conditions. 
It suffices to define $F_\alpha : (\ellnu)^9 \mapsto  (\ell_{\nu'}^1)^9$, by
\begin{equation}\label{eq:Fcal_alpha}
F_\alpha^i(a_\alpha) \bydef \left\{ (-\im \omega k  -\langle \alpha, \lambda \rangle )a_{\alpha,k}^i + \mathcal{F}_{\alpha,k}^{i}(a)  : k \in \mathbb{Z} \right\}.
\end{equation}
Again, the evaluation of $\mathcal{F}_\alpha$ requires the lower order data,
but $a_\alpha$ is the only variable or unknown. All Taylor coefficients of lower 
order being already controlled by the knowledge of a numerical approximation and 
a known constant $r_\alpha$ such that \eqref{eq:RalphaSupBound} is satisfied. 
We remark that most of the $Z_1$ and $Z_2(r)$ bounds computed to apply 
Theorem \ref{thm:ContractionMapping} in the case $\alpha=0$ are the same for higher 
order cases, and can be stored to speed up the validation process.
 Moreover, the validation for all coefficients of the same order can be executed 
 in parallel as they are independent. For each step of the validation, 
 we apply Theorem \ref{thm:ContractionMapping} to obtain bounds on the truncation 
 error in Fourier space for each coefficients $a_\alpha$ up to some desired order $N_t$. 

\begin{remark}[Error spreading in the node by node validation]
The truncation error is closely related to the computation of the bound $Y$ 
in Theorem \ref{thm:ContractionMapping}. This computation requires the 
evaluation of $\mathcal{F}_\alpha(a)$ using the exact coefficients of lower orders. 
This can be done with the help of interval arithmetic. However, this will cause the error 
to grow at every step of the process. Consequently, the application of the contraction 
mapping argument might fail before to reach the target value $N_t$. In such scenario, 
it is possible to rescale the parameterized manifold, that is to pick $\gamma<1$ and do 
the substitution $B_\alpha(t) = \gamma^{|\alpha|} A_\alpha(t)$ for all $\alpha$. This will reduce the truncation error for each 
Taylor coefficients already validated, and reduce the required Taylor truncation to reach the 
desired precision. Therefore lowering the targeted dimension $N_t$.
\end{remark}

By choice of $N_t$, the terms that have yet to be validated are expected to have
norm below a chosen threshold, usually a few dozen multiples machine precision
or less. The term not yet bounded in the difference between the approximate 
parameterization and the true solution is expected to be of similar magnitude 
as the chosen threshold. More specifically, we set 
\[
\bar P (t,\sigma) = \sum_{|\alpha|=0}^{N_t} 
\bar a_\alpha(t) \sigma^\alpha \bydef \sum_{|\alpha|=0}^{N_t}
 \sum_{|k|<K} \bar a_{\alpha,k} e^{\im \omega k t} \sigma^\alpha. 
\] 
Then, thanks to the previous validations, we have
\begin{align}
\|P - \bar P\|_\infty &\leq \left\| \sum_{|\alpha|=0}^{N_t}  a_\alpha(t) \sigma^\alpha  
- \bar P(t,\sigma)\right\|_\infty 
+ \left\|\sum_{|\alpha|=N_t+1}^\infty  a_\alpha(t) \sigma^\alpha \right\|_\infty  \nonumber \\
& \leq \sum_{|\alpha|=0}^{N_t} r_\alpha  
+ \left\|\sum_{|\alpha|=N_t+1}^\infty  a_\alpha(t) \sigma^\alpha \right\|_\infty. \label{Error:finite}
\end{align}

Our objective is now to obtain an upper bound on the norm of the tail, 
which is the last term of \eqref{Error:finite}. Thanks to the previous remark, the 
approximation $\bar A_\alpha (t) = 0$ should be a good approximation around 
which to construct another fixed point argument. This is the goal of the next section.

%
%
%
%
%
%
%
%
%

\subsection{Overcoming the data dependance: rewriting for the Coefficients $|\alpha|>N_t$}\label{Sec:RewritingTail}

In this section, we rearrange \eqref{eq:Fcal_alpha} to express the Fourier-Taylor coefficient 
of a given order as the fixed point of a bounded operator defined only on coefficients of 
lower order. This is possible, as all the non-linear terms in the 
evaluation of $\mathcal{F}_\alpha$ are given by Cauchy-Convolution products:
a rearrangement of the sum will provide the desired outcome. 
This is the object of the following definition. 

\begin{definition}[Rewriting of the Cauchy-Convolution product]\label{def:hatStar}
Let $a,b \in X_2$.  For $k \in \mathbb{Z}$, and $\alpha \in \mathbb{N}^2$ with $|\alpha| > 0$, define 
\begin{equation}\label{eq:HatStar}
(a \hat \ast b)_{\alpha,k} = \sum_{\substack{\alpha_1 +\alpha_2 = \alpha \\ |\alpha_1|,|\alpha_2| > 0 }} (a_{\alpha_1} \star b_{\alpha_2})_k.
\end{equation}
Then it is possible to separate terms involving $\alpha=0$ from the Cauchy-Convolution product. 
That is
\[
(a \ast b)_{\alpha,k} =  (a_0 \star b_\alpha)_k + (a_\alpha \star b_0)_k + (a \hat \ast b)_{\alpha,k}.
\]
The indexing values $\alpha_1$ and $\alpha_2$ in \eqref{eq:HatStar} will never reach $\alpha$,
as this would require the other index to be zero, but such case is excluded by definition. 
In other words, $(a \hat \ast b)_{\alpha}$ does not depend on 
$a_\alpha$ or $b_\alpha$, as desired.
\end{definition}

In practice, this definition is useful to compute the exact reminder $R_\alpha(a)$ in the expression
\[
\mathcal{F}_{\alpha,k}(a_\alpha) = \left(D\mathcal{F}_0(a_0) a_\alpha \right)_k +R_{\alpha,k}(a).
\]
Note that this expression lets us express the homological equation so
that $R_{\alpha,k}(a)$ is the only non linear term, but in such a way that it is 
independent of $a_\alpha$. This rewriting is 
straightforward, and we refer the interested reader to \cite{Paper1} for 
an explicit step-by-step example.
In essence, the non-linear terms are broken down into two terms thanks to 
Definition \ref{def:hatStar}. In the case of the CRFBP, the reminder is
\begin{equation*}
\small
R_{\alpha,k}(a)= \begin{pmatrix}
 0 \\
 -\displaystyle \sum_{i=1}^3 m_i\left(a^{1i} \hat \ast a^{6+i} \hat \ast a^{6+i} \hat \ast a^{6+i}\right)_{\alpha,k} \\ 
0\\
 -\displaystyle \sum_{i=1}^3 m_i\left(a^{3i} \hat \ast a^{6+i} \hat \ast a^{6+i} \hat \ast a^{6+i}\right)_{\alpha,k} \\
0 \\
 -\displaystyle \sum_{i=1}^3 m_i\left(a^{5i} \hat \ast a^{6+i} \hat \ast a^{6+i} \hat \ast a^{6+i}\right)_{\alpha,k} \\
-\left(a^{11} \hat \ast a^{2} \hat \ast a^{7} \hat \ast a^{7} \hat \ast a^{7} + a^{31} \hat \ast a^{4} \hat \ast a^{7} \hat \ast a^{7} \hat \ast a^{7} + a^{51} \hat \ast a^{6} \hat \ast a^{7} \hat \ast a^{7} \hat \ast a^{7}\right)_{\alpha,k} \\
-\left(a^{12} \hat \ast a^{2} \hat \ast a^{8} \hat \ast a^{8} \hat \ast a^{8} + a^{32} \hat \ast a^{4} \hat \ast a^{8} \hat \ast a^{8} \hat \ast a^{8} + a^{52} \hat \ast a^{6} \hat \ast a^{8} \hat \ast a^{8} \hat \ast a^{8}\right)_{\alpha,k} \\
-\left(a^{13} \hat \ast a^{2} \hat \ast a^{9} \hat \ast a^{9} \hat \ast a^{9} + a^{33} \hat \ast a^{4} \hat \ast a^{9} \hat \ast a^{9} \hat \ast a^{9} + a^{53} \hat \ast a^{6} \hat \ast a^{9} \hat \ast a^{9} \hat \ast a^{9}\right)_{\alpha,k} \\
 \end{pmatrix}.
\end{equation*}
Again, no term of order $\alpha$ is required for the computation of 
$R_\alpha$. Therefore, for $|\alpha|>2$, we see that the zero of the 
operator given in \eqref{eq:Fcal_alpha} is also a solution of the equivalent 
linear (in terms of $a_\alpha$) problem 
\begin{equation}\label{eq:ReminderEquation}
(-\im \omega k  -\langle \alpha, \lambda \rangle )a_{\alpha,k} +\left( D\mathcal{F}_0(a_0)a_{\alpha} \right)_k = -R_{\alpha,k}(a), \quad \forall k \in \mathbb{Z}.
\end{equation}

We now show that the linear operator on the left-hand side is boundedly invertible, 
so that $a_\alpha$ can be expressed as the fixed point of some operator. We first 
denote the diagonal part of the operator exhibited on the left of 
equation \eqref{eq:ReminderEquation} by $\mathcal{L}_{\alpha}$. 
It is defined for any $h \in (\ellnu)^9$ as
\[
(\mathcal{L}_{\alpha}h)_k =  (-\im \omega k - \langle \alpha,\lambda \rangle)h_k.
\]
The operator $\mathcal{L}_\alpha$ is the dominant factor in equation 
\eqref{eq:ReminderEquation}, thus we will apply its inverse to both side 
of \eqref{eq:ReminderEquation}. The inverse exists as the Floquet exponents
 are assumed to be non-resonant, and it is given by 
\[
(\mathcal{L}_{\alpha}^{-1}h)_k =  \frac{h_k}{(-\im \omega k - \langle \alpha,\lambda \rangle)}.
\]
Hence, for all $\alpha$, the solution of equation \eqref{eq:ReminderEquation} 
also satisfies
\begin{equation}\label{eq:Gdefinition}
\left( \left[ \mbox{Id} +\mathcal{L}_\alpha^{-1}\circ D\mathcal{F}_0(a_0) \right] a_\alpha \right)_k  = \left(\mathcal{L}_\alpha^{-1}\circ R_\alpha(a) \right)_k, \quad \forall k \in \mathbb{Z}.
\end{equation}
We note that for all $\alpha$,
\begin{equation}\label{eq:NormLinverse}
\left\| \mathcal{L}_\alpha^{-1} \right\|_{\mathcal{B}\left((\ellnu)^9\right)} \leq \sup_{k \in \mathbb{Z}} \left| \frac{1}{(-\im \omega k - \langle \alpha,\lambda \rangle)} \right| \leq \frac{1}{|\alpha|\left|R_\lambda\right|} < \infty,
\end{equation}
where $R_\lambda = \mbox{Re}(\lambda_1) = \mbox{Re}(\lambda_2)  \neq 0$.
We see that the denominator of the last term increases to $\infty$ as $|\alpha| \to \infty$. 
Now, we use the left hand side of equation \eqref{eq:Gdefinition} to define 
$\mathcal{G}_{\alpha}: (\ellnu)^9 \to (\ellnu)^9$, whose inverse is obtained with a 
Neumann type argument. 
That is, for all $\alpha$, the existence of $\mathcal{G}_\alpha^{-1}$ will be stated with 
an explicit upper bound on its norm. This is sufficient to apply the contraction mapping
argument. The remainder of this section is devoted to the proof of  the following Theorem.

\begin{theorem}\label{Thm:InverseG}
Consider $\mathcal{G}_{\alpha}$, with $|\alpha|> N_t$, defined by
\[
 \mbox{Id} +\mathcal{L}_\alpha^{-1} D\mathcal{F}_0(a_0).
\]
Then $\mathcal{G}_{\alpha}: (\ellnu)^9 \to (\ellnu)^9$ is invertible for all $\alpha$. Moreover, 
$a_\alpha$ satisfies
\[
a_\alpha  = - \mathcal{G}_\alpha^{-1}
 \mathcal{L}_{\alpha}^{-1} R_\alpha(a),~\mbox{for all}~ |\alpha|>N_t,
\]
and the operator $\mathcal{G}_\alpha^{-1} \mathcal{L}_{\alpha}^{-1} R_\alpha : 
(\ellnu)^9 \to (\ellnu)^9 $ is Fr\'{e}chet differentiable for each $\alpha$.
\end{theorem}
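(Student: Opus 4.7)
The plan is to invert $\mathcal{G}_\alpha = \mathrm{Id} + \mathcal{L}_\alpha^{-1} D\mathcal{F}_0(a_0)$ by a Neumann series, exploiting the decay estimate \eqref{eq:NormLinverse} for $\|\mathcal{L}_\alpha^{-1}\|$ combined with a fixed, finite operator norm for $D\mathcal{F}_0(a_0)$. First I would produce a computable constant $M$ with $\|D\mathcal{F}_0(a_0)\|_{\mathcal{B}((\ellnu)^9)} \le M$. Reading off the formula for $\mathcal{F}$ from Definition \ref{def:FinSpace}, its Fr\'echet derivative at $a_0$ is a $9\times 9$ block of bounded linear operators on $\ellnu$: each block is the identity, a scalar multiple, the multiplier $-\im\omega k$ arising from the diagonal of the homological system, or a multiplication by a finite Cauchy product of components of $a_0$. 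All of these are bounded because $\ellnu$ is a Banach algebra (Proposition \ref{EstimatesBound}), and the validated enclosure of $a_0$ from Section \ref{sec:PO} supplies an explicit numerical value for $M$ via interval arithmetic.

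Combining the two estimates produces
\[
\|\mathcal{L}_\alpha^{-1} D\mathcal{F}_0(a_0)\|_{\mathcal{B}((\ellnu)^9)} \;\le\; \frac{M}{|\alpha|\,|R_\lambda|},
\]
and I would require the truncation order $N_t$ to be large enough that $M/(N_t\,|R_\lambda|) < 1$; this condition (together with those driving the a-posteriori bounds of the earlier subsections) fixes a minimal acceptable value of $N_t$. For every $|\alpha| > N_t$ the standard Neumann series then converges and yields
\[
\mathcal{G}_\alpha^{-1} = \sum_{j=0}^\infty \left(-\mathcal{L}_\alpha^{-1} D\mathcal{F}_0(a_0)\right)^j, \qquad \|\mathcal{G}_\alpha^{-1}\|_{\mathcal{B}((\ellnu)^9)} \le \left(1 - \frac{M}{|\alpha|\,|R_\lambda|}\right)^{-1}.
\]
Rewriting equation \eqref{eq:Gdefinition} as $\mathcal{G}_\alpha a_\alpha = -\mathcal{L}_\alpha^{-1} R_\alpha(a)$ and applying $\mathcal{G}_\alpha^{-1}$ on the left delivers the claimed fixed-point formula $a_\alpha = -\mathcal{G}_\alpha^{-1}\mathcal{L}_\alpha^{-1} R_\alpha(a)$ for every $|\alpha| > N_t$.

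Fr\'echet differentiability of the composed map $a \mapsto \mathcal{G}_\alpha^{-1}\mathcal{L}_\alpha^{-1} R_\alpha(a)$ reduces to differentiability of $R_\alpha$, since $\mathcal{G}_\alpha^{-1}\mathcal{L}_\alpha^{-1}$ is a fixed bounded linear operator. The explicit formula for $R_\alpha$ exhibits it as a finite sum of multilinear expressions in the coordinates of $a$ joined by the truncated product $\hat\ast$ of Definition \ref{def:hatStar}; by the submultiplicativity inherited from Proposition \ref{EstimatesBound}, the standard polynomial-map argument shows that $R_\alpha$ is analytic, and in particular Fr\'echet differentiable, on $(X_2)^9$. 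The main obstacle I anticipate lies in the first step: producing a validated, reasonably sharp bound on $\|D\mathcal{F}_0(a_0)\|$, using interval enclosures of $a_0$ and of the multiplication-operator blocks, tight enough that the threshold $M/(N_t\,|R_\lambda|) < 1$ is met at a practically usable value of $N_t$. Everything downstream of that estimate is a clean Neumann-plus-polynomial argument.
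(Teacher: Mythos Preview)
Your argument is essentially correct, and the Fr\'echet differentiability step matches the paper exactly. One small slip: the multiplier $-\im\omega k$ is \emph{not} part of $D\mathcal{F}_0(a_0)$; it belongs to $\mathcal{L}_\alpha$. The blocks of $D\mathcal{F}_0(a_0)$ are only identities, scalar multiples, and multiplication operators by Cauchy products of components of $a_0$, all of which are bounded by the Banach algebra estimate. Had the diagonal multiplier been present, $D\mathcal{F}_0(a_0)$ would be unbounded and your Neumann argument would collapse, so be careful to keep the two pieces separate.

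The substantive difference from the paper is in how the gap between $N_t$ and the Neumann threshold is handled. You simply \emph{impose} that $N_t$ be large enough so that $M/(N_t\,|R_\lambda|)<1$, folding this into the choice of truncation order. The paper explicitly refuses to assume this (``it is not granted that $M\leq N_t$'') and instead treats the finitely many orders $N_t<|\alpha|\leq M$ by a second, computer-assisted Neumann argument: it splits $D\mathcal{F}_0(a_0)=B^K+B^\infty$ with $B^K$ finite-rank and $B^\infty$ small, numerically inverts $M_\alpha=\mathrm{Id}+\mathcal{L}_\alpha^{-1}B^K$ for each such $\alpha$, and verifies $\|\mathcal{E}_\alpha\|+\|M_\alpha^\dagger\|\cdot\|\mathcal{L}_\alpha^{-1}B^\infty\|<1$ case by case. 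Your route is cleaner and avoids all of this machinery, but it forces a potentially much larger $N_t$, which in this setting means validating many more homological equations in Section~\ref{sec:lowOrderTerms} with the attendant growth of interval error. The paper's route is more intricate but decouples the choice of $N_t$ from the crude operator-norm bound on $D\mathcal{F}_0(a_0)$, which is what makes the computations in Section~\ref{Sec:Results} feasible at $N_t=6$.
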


Using Proposition \ref{EstimatesBound} and the triangle inequality, one verifies 
that $G_\alpha(h) \in (\ellnu)^9$ for any $h \in (\ellnu)^9$, and the first statement of the
Theorem is verified. Moreover,  Fr\'{e}chet differentiability follows from linearity of 
$\mathcal{G}_\alpha^{-1}$ and $\mathcal{L}_\alpha^{-1}$, combined with the 
fact that $R_\alpha$ is Fr\'{e}chet differentiable (it is a polynomial function in a 
Banach algebra). Finally, it follows from equation \eqref{eq:NormLinverse}
 that there exist $M$ such that for all $|\alpha|>M$, we have that
\[
\left\| \mathcal{L}_\alpha^{-1} \right\|_{\mathcal{B}\left((\ellnu)^9\right)}
 \cdot \left\|D\mathcal{F}_0(a_0)\right\|_{\mathcal{B}\left((\ellnu)^9\right)} < 1.
\]
Consequently, for $|\alpha|> M$ we have that
\begin{align}\label{eq:BoundNeumann1}
\left\|\mathcal{G}_\alpha^{-1}\right\|_{\mathcal{B}((\ellnu)^9)} \leq \frac{1}{1-\left\| \mathcal{L}_\alpha^{-1} \right\|_{\mathcal{B}((\ellnu)^9)} \cdot \left\|D\mathcal{F}_0(a_0)\right\|_{\mathcal{B}\left((\ellnu)^9\right)}}.
\end{align}
This provides the desired formulation, but it is not granted that $M\leq N_t$, the size of 
the truncation. So for $\alpha$ such that $N_t < |\alpha| \leq M$, we have to adapt the argument. 
Since there are only finitely many such cases, we use the computer to apply a 
different Neumann series argument for each of the values individually. This approach also 
leads to a method for computing an upper bound on 
$\|D\mathcal{F}_0(a_0)\|_{\mathcal{B}((\ellnu)^9)}$ with the help of the numerical 
approximation $\bar a_0$ and the following definition.

\begin{definition}[Separation of the convolution product]\label{def:ProductSplit}
Let $K$ denote the size of the finite dimensional projection which was used to apply Theorem \ref{thm:ContractionMapping} and obtain the upper bound $r_0$ on the truncation error. We 
express each component of $a_0$ as a sum of two elements, first an element containing 
all the component of order $|k|<K$, second an elements containing the higher order 
coefficients. We define $a_0^K$ by 
\[
a_0^K = \iota_K^9 \circ \pi_K^9 (a_0),
\]
and its complement $a_0^\infty = a_0 - a_0^K$. It follows by definition that $\left(a_0^\infty\right)_k = 0$ whenever $|k|<K$, and
\[
\left\| a_0^\infty \right\|_{(\ellnu)^9} < r_0.
\]
This inequality will be used to show that the operator 
$\mathcal{L}_\alpha^{-1}\circ D\mathcal{F}_0(a_0)$ 
has a small norm outside of a finite number of terms. 
To simplify the notation, set
\begin{align*}
\bar \star : &\ellnu \times \ellnu \to \ellnu \\
            &(a,b) \mapsto \iota_K^1(\pi_K(a)) \star \iota_K^1(\pi_K(b)) = (a^K \star b^K),
\end{align*}
and note that the resulting element has only finitely many non-zero entries.
In short, it is an element of $\ellnu$. By the bi-linearity of $\star$, for any 
$a,b \in \ellnu$ and for all $k \in \mathbb{Z}$, we have 
\[
(a \star b)_k = (a ~\bar\star~ b)_k + (a^K \star b^\infty)_k + (a^\infty \star b^K)_k +(a^\infty \star b^\infty)_k.
\]
This observation is extended to higher order products. In practice, quartic and quintic products
generate $16$ and $32$ terms respectively, however not all of these require careful consideration.
 Indeed, all but two include a term of the form $(a_0^\infty)^i$, whose norm is bounded by $r_0 \ll 1$. 
Repeated application of Proposition \eqref{EstimatesBound} and the triangle inequality leads to
an upper bound that we illustrate by considering a generic quartic term.  A more detailed example is given
in Section \ref{Ad:Dbounds}. Let $a,b,c,h \in \ellnu$ with $\|h\|_{1,\nu}=1$ and 
$\|a^\infty\|,\|b^\infty\|,\|c^\infty\|<r_0$.  Then there is $p(r_0)$, a polynomial of degree three with positive coefficients, such that $p(0)=0$ satisfying
\begin{align*}
\left\| a \star b \star c \star h \right\|_{1,\nu} &\leq \left\| a^K \star b^K \star c^K \star h^K \right\|_{1,\nu}
 +\left\| a^K \star b^K \star c^K \star h^\infty \right\|_{1,\nu}  +p(r_0) \\
&= \left\| a ~\bar\star~ b ~\bar\star~ c ~\bar\star~ h \right\|_{1,\nu}
 +\left\| a^K \star b^K \star c^K \star h^\infty \right\|_{1,\nu}  +p(r_0) \\
\end{align*}
We note from this inequality that any operator with non-linearities defined using the convolution products $\star$ can be broken down into a finite dimensional part using 
only $\bar \star$, and "reminder" close to the zero operator.
\end{definition}

We use Definition \ref{def:ProductSplit} to construct $B^K : (\ellnu)^9 \to (\ellnu)^9$,
 the truncation of $D\mathcal{F}_0(a_0)$.  For $h \in (\ellnu)^9$, this is given by
\begin{equation*}
\small
(B^K h)_{k}^i = \begin{cases}
  (h_k^K)^{2}, &\mbox{if}~i=1,  \\
 2(h_k^K)^{4} +(h_k^K)^{1} -\displaystyle\sum_{i=1}^3 
 m_i\left[ 3\left(a_0^{1i}\bar\star a_0^{6+i}\bar\star a_0^{6+i} 
 \bar\star  h^{6+i}\right)_k +\left(h^{1}\bar\star  
 a_0^{6+i}\bar\star a_0^{6+i}\bar\star a_0^{6+i}\right)_k\right] &\mbox{if}~i=2,  \\ 
(h_k^K)^{4} &\mbox{if}~i=3, \\
-2(h_k^K)^{2} +(h_k^K)^{3} -\displaystyle\sum_{i=1}^3 m_i
\left[ 3\left(a_0^{3i}\bar\star a_0^{6+i}\bar\star a_0^{6+i}
 \bar\star h^{6+i}\right)_k +\left(h^{3}\bar\star a_0^{6+i}
 \bar\star a_0^{6+i}\bar\star  a_0^{6+i}\right)_k\right] &\mbox{if}~i=4,  \\
(h_k^K)^{6} &\mbox{if}~i=5, \\
-\displaystyle\sum_{i=1}^3 m_i\left[ 3\left(a_0^{5i}\bar\ast a_0^{6+i}\bar\star a_0^{6+i} 
\bar\star h^{6+i}\right)_k +\left(h^{5}\bar\star a_0^{6+i}\bar\star a_0^{6+i}
\bar\star a_0^{6+i}\right)_k\right] &\mbox{if}~i=6,  \\
\end{cases}
\end{equation*}
and
\begin{align*}
(B^Kh)^i_k &= \left(h^1 \bar \star a_0^2 \bar \star a_0^i \bar \star a_0^i \bar \star a_0^i \right)_k +\left(a_0^1 \bar \star h^2 \bar \star a_0^i \bar \star a_0^i \bar \star a_0^i \right)_k + 3\left(a_0^1 \bar \star a_0^2 \bar \star h^i \bar \star a_0^i \bar \star a_0^i \right)_k \\
&+\left(h^3 \bar \star a_0^4 \bar \star a_0^i \bar \star a_0^i \bar \star a_0^i \right)_k 
+\left(a_0^3 \bar \star h^4 \bar \star a_0^i \bar \star a_0^i \bar \star a_0^i \right)_k
 + 3\left(a_0^3 \bar \star a_0^4 \bar \star h^i \bar \star a_0^i \bar \star a_0^i \right)_k \\
&+\left(h^5 \bar \star a_0^6 \bar \star a_0^i \bar \star a_0^i \bar \star a_0^i \right)_k
 +\left(a_0^5 \bar \star h^6 \bar \star a_0^i \bar \star a_0^i \bar \star a_0^i \right)_k 
 + 3\left(a_0^5 \bar \star a_0^6 \bar \star h^i \bar \star a_0^i \bar \star a_0^i \right)_k, 
 \quad \mbox{for}~ i=7,8,9. \\
\end{align*}
Note that this can be represented by a finite matrix multiplication,
as $(B^Kh)_k = 0$ for $|k|>5K$. Additionally, following the remark in 
Definition \ref{def:ProductSplit}, it is possible to define $B^\infty$ such that
\[
D\mathcal{F}_0(a_0)h = B^Kh + B^\infty h,~ \forall h\in (\ellnu)^9.
\]  
By construction, $\mathcal{L}_{\alpha}^{-1}\circ B^\infty$ has small norm, providing 
another justification that $\mathcal{G}_\alpha$ is invertible. 
Set $M_\alpha= \mbox{Id} + \mathcal{L}_\alpha^{-1} B^K$. This is eventually the 
identity for all $\alpha$, as $B^K$ is eventually zero. So that it is invertible when the finite part is, 
which can be verified with the help of a computer. Assume that this stands true, 
so that the inverse exists and is approximated numerically by 
$M_\alpha^{\dagger}$. Using an approach similar to the computation of the bound $Z_0$ 
in Theorem \ref{thm:ContractionMapping}, we have $\| \mathcal{E}_\alpha \| \ll 1$ such that
\[
M_\alpha^\dagger M_\alpha = \mbox{Id} +\mathcal{E}_\alpha.
\]
It is worth while to rewrite the problem once more, so that 
\begin{align*}
\mathcal{G}_\alpha = \mbox{Id} +\mathcal{L}_{\alpha}^{-1}D\mathcal{F}_0(a_0)  &= \mbox{Id} +\mathcal{L}_{\alpha}^{-1} B^K +\mathcal{L}_{\alpha}^{-1} B^\infty \\
&= M_\alpha +\mathcal{L}_{\alpha}^{-1} B^\infty \\
\end{align*}
and
\begin{align*}
\left[ M_\alpha +\mathcal{L}_{\alpha}^{-1} B^\infty \right]  &= -\mathcal{L}_\alpha^{-1} \mathcal{R}_\alpha \quad \Longleftrightarrow\\
M_{\alpha}^\dagger \left[ M_\alpha +\mathcal{L}_{\alpha}^{-1} B^\infty \right] &= -M_{\alpha}^\dagger \mathcal{L}_\alpha^{-1} \mathcal{R}_\alpha \quad \Longleftrightarrow \\
\left[ \mbox{Id} +\mathcal{E}_\alpha + M_\alpha^\dagger \mathcal{L}_{\alpha}^{-1} B^\infty \right] &= -M_{\alpha}^\dagger \mathcal{L}_\alpha^{-1} \mathcal{R}_\alpha
\end{align*}
The last equation suggests the use of a Neumann argument. We note that the 
left is invertible whenever
\[
\left\| \mathcal{E}_\alpha + M_\alpha^\dagger \mathcal{L}_{\alpha}^{-1} B^\infty \right\|_{\mathcal{B}\left( (\ellnu)^n \right)} \leq \left\| \mathcal{E}_\alpha \right\|_{\mathcal{B}\left( (\ellnu)^n \right)} +\left\| M_\alpha^\dagger \mathcal{L}_{\alpha}^{-1} B^\infty \right\|_{\mathcal{B}\left( (\ellnu)^n \right)} < 1.
\]  
The only term whose norm is not already known (or bounded with computer assistence) 
is $M_\alpha^\dagger\mathcal{L}_{\alpha}^{-1} B^\infty$. The first element,
 $M_\alpha^\dagger$, is an eventually diagonal operator 
 (whose norm is bounded using the computer). 
We focus on the remaining terms in a single step. To lighten the notation, 
define $B_{ij}^\infty$ for $1\leq i,j \leq 9$ so that
\[
\left( B^\infty h \right)^i = \sum_{j=1}^9 B_{i,j}^\infty h^j, ~\forall 
h \in (\ellnu)^9 ~\mbox{and}~ 1\leq i \leq 9.
\]
This represents the component-wise definition of the difference between the 
derivative $D\mathcal{F}_0$ and $B^K$. That is, for all $h \in (\ellnu)^9$, 
we have 
\[
B_{ij}^\infty h^j = D_{a_0^j}\mathcal{F}^i(a_0)h^j - \left(B^Kh^{e_j}\right)^i,
\]
where $h^{e_j} \in (\ellnu)^9$ is equal to $h$ in the $j-$th component and zero elsewhere.
 By construction, $B_{ij}^\infty$ represents the infinite part of $D_{a^j}\mathcal{F}^i(a_0)$. 
 We follow definition \ref{def:ProductSplit}, for each $B_{ij}^\infty$, to 
 compute a polynomial $p_{i,j}(r_0)$ such that for $h \in \ellnu$
\[
\left\| B_{ij}^\infty h \right\|_{1,\nu} \leq \| c_{ij} \star h^\infty \|_{1,\nu} +p_{i,j}(r_0),
\]
where $c_{ij}$ is a convolution product whose factor are all truncated to order $K$. Then, each entries
$(c_{i,j} \star h^\infty)_k$ of the convolution up to order $K$ is bounded using 
Proposition \eqref{EstimatesBound} for best accuracy. Note that the action of 
$\mathcal{L}_\alpha^{-1}$ is equal in every component and can be expressed 
using a component wise scalar multiplication. For simplicity, set
\begin{equation}\label{eq:Ltermwise}
d_{\alpha,k} =   \frac{1}{-\im \omega k - \langle \alpha,\lambda \rangle}, \quad \mbox{and} \quad d_\alpha^\infty= \sup_{|k| \geq K} \left| \frac{1}{-\im \omega k - \langle \alpha,\lambda \rangle} \right|.
\end{equation}
This leads to an upper bound on each component of $\mathcal{L}_\alpha^{-1}B^\infty$.
We will study one specific case, as the others are also convolution product and treated similarly. We take the convolution product $c_{71}= (a_0^K)^2 \star (a_0^K)^7 \star (a_0^K)^7 \star (a_0^K)^7$ to illustrate the case $B_{71}^\infty$, which involves only this product. Let $h \in (\ellnu)^9$ with $\|h\|_{(\ellnu)^9} = 1$. Then
\begin{align}
\left\| \left( \mathcal{L}_{\alpha}^{-1} B^\infty h^{e_1} )\right)^7 \right\|_{(\ellnu)^9} 
&= \sum_{k \in \mathbb{Z}} \left| d_{\alpha,k} \left( B_{71}^\infty h^1\right)_k \right| \nu^{|k|} \nonumber \\
&\leq \sum_{k \in \mathbb{Z}} \left| d_{\alpha,k} \left(c_{71}\star (h^1)^\infty \right)_k  
\right| \nu^{|k|} +\left\|\mathcal{L}_{\alpha}^{-1}\right\|_{\mathcal{B}(\ellnu)} p_{71}(r_0) \nonumber \\
&\leq \sum_{|k|<K} \left| d_{\alpha,k}\left( c_{71} \star (h^1)^\infty \right)_k \right|\nu^{|k|} 
+ d_\alpha^\infty \|c_{71}\|_{1,\nu} +\left\|\mathcal{L}_{\alpha}^{-1}
\right\|_{\mathcal{B}(\ellnu)}p_{71}(r_0). \label{def:BoundDij} 
\end{align}
The remaining sum is bounded using Proposition \ref{EstimatesBound}, so that the desired bound is fully determined. 

Note that the case just discussed includes a single convolution product. But other cases can be considered similarly after using the triangle inequality to separate each convolution. 
Moreover, some other cases such as $B_{12}^\infty,B_{34}^\infty,B_{56}^\infty$, include linear terms. For such scenario, we note that
\[
\left\| \mathcal{L}_\alpha^{-1} h^\infty \right\|_{1,\nu}= \sum_{|k|\geq K} \left|\frac{(h^\infty)_k}{-\im \omega k -\langle \alpha,\lambda \rangle} \right| \nu^{|k|} \leq d_\alpha^\infty.
\]

Now, we are able to state the final upper bound on the norm of 
$\mathcal{L}_\alpha^{-1} B^\infty$. Let $\mathcal{Z}_\alpha^{ij}$ 
denote the bound defined in equation \eqref{def:BoundDij},
 so that for any $\alpha$ we have
\[
\left\| \mathcal{L}_\alpha^{-1} B^\infty \right\|_{\mathcal{B}((\ellnu)^9)} \leq \max
\begin{pmatrix}
d_\alpha^\infty \\
3d_\alpha^\infty + \mathcal{Z}_\alpha^{21} +m_1 \mathcal{Z}_\alpha^{27} 
+m_2 \mathcal{Z}_\alpha^{28} +m_3 Z_\alpha^{29} \\
d_\alpha^\infty \\
3d_\alpha^\infty + \mathcal{Z}_\alpha^{43} +m_1 \mathcal{Z}_\alpha^{47} 
+m_2 \mathcal{Z}_\alpha^{48} +m_3 \mathcal{Z}_\alpha^{49} \\
d_\alpha^\infty \\
\mathcal{Z}_\alpha^{65} +m_1 \mathcal{Z}_\alpha^{67} 
+m_2 \mathcal{Z}_\alpha^{68} +m_3 \mathcal{Z}_\alpha^{69} \\
\mathcal{Z}_\alpha^{77} +\sum_{j=1}^6 \mathcal{Z}_\alpha^{7j} \\
\mathcal{Z}_\alpha^{88} +\sum_{j=1}^6 \mathcal{Z}_\alpha^{8j} \\
\mathcal{Z}_\alpha^{99} +\sum_{j=1}^6 \mathcal{Z}_\alpha^{9j}
\end{pmatrix} 
\]
Since an upper bound on this term can be calculated with the assistance of a computer, we can verify the criteria 
\begin{align}\label{eq:FiniteCriteria}
\left\|\mathcal{E}_\alpha\right\|_{\mathcal{B}\left( (\ellnu)^n \right)}  
+\left\|M_\alpha^\dagger\right\|_{\mathcal{B}\left( (\ellnu)^n \right)} 
\cdot \left\| \mathcal{L}_{\alpha}^{-1}\circ B^\infty 
\right\|_{\mathcal{B}\left( (\ellnu)^n \right)} < 1, 
\quad \forall \alpha ~\mbox{with}~N_t< |\alpha| \leq M,
\end{align}
This approach is computationally more costly than the first approach, and can only be done finitely 
many times, but this argument is utilized only in the cases where \eqref{eq:BoundNeumann1} is not satisfied.  

Assume now that equation \eqref{eq:FiniteCriteria} is verified (with computer assistance), so that
\begin{align}\label{eq:BoundNeumann2}
\|\mathcal{G}_\alpha^{-1}\|_{\mathcal{B}\left( (\ellnu)^n \right)}
 \leq \frac{\left\|M_\alpha^\dagger 
 \right\|_{\mathcal{B}\left( (\ellnu)^n \right)} }{1 - \left\|\mathcal{E}_\alpha 
 \right\|_{\mathcal{B}\left( (\ellnu)^n \right)}-
  \left\|M_\alpha^\dagger\right\|_{\mathcal{B}\left( (\ellnu)^n \right)} 
  \cdot \left\| \mathcal{L}_{\alpha}^{-1}\circ B^\infty \right\|_{\mathcal{B}\left( (\ellnu)^n \right)}}, 
\end{align}
for $\alpha ~\mbox{with}~N_t< |\alpha| \leq M$.  Set $B_G$ to represent the maximum value 
taken by the bounds given in \eqref{eq:BoundNeumann1} 
and \eqref{eq:BoundNeumann2}.  It follows that $B_G$ is finite and
\begin{equation}\label{eq:TboundNorm}
\|\mathcal{G}_\alpha^{-1}\|_{\mathcal{B}\left( (\ellnu)^9 \right)} \leq B_G ,\quad \forall |\alpha|>N_t.
\end{equation}
This complete the proof of Theorem \ref{Thm:InverseG}.

\subsection{Global connection: two point boundary value problem for a homoclinic orbit}\label{Sec:Chebyshev}

Recall that the CRFBP preserves the
Jacobi integral $H$ given in Equation \eqref{eq:Energy}.
So, if $\gamma \colon [0,T] \to \mathbb{R}^6$ is a periodic 
solution of $\dot u = f(u)$, then there is a $K \in \mathbb{R}$ 
so that $K= H(\gamma(t))$ for all $t$. Moreover if
\[
\mathcal{K} \bydef \left\{ u \in \R^6 : H(u)= K \right\},
\]
is the energy level set associated with $\gamma$, 
then $W_{s,u}(\gamma) \subset \mathcal{K}$. This follows from the continuity of $H$.

Note that $\mathcal{K}$ is a five dimensional smooth manifold except at 
points where $\nabla H(u)$ vanishes.
Since $f$ conserves $H$, we have that $\nabla H(\bar u) = 0$ if and only if $f(\bar u) = 0$.
Moreover, a periodic orbit, its local stable/unstable manifolds, and any orbits homoclinic
to the periodic orbit are uniformly bounded away from the equilibrium set.
It follows that $\gamma$ and its homoclinic orbits live in the five dimensional energy 
manifold $\mathcal{K}$.  
If $W^u(\gamma)$ and $W^s(\gamma)$ intersect at a point $p \in \mathcal{K}$, we say that 
they intersect transversally relative to the energy manifold $\mathcal{K}$ if the 
tangent spaces $T_p W^s(\gamma)$ and $T_p W^u(\gamma)$ span the tangent space $T_p(\mathcal{K})$.
Such an intersection is robust with respect to conservative perturbations.

Solving two point boundary value problems in energy manifolds for conservative systems
leads to a well know degeneracy in the equations. More precisely, the presence of the 
conserved quantity leads to one fewer unknown than equations.  
 This degeneracy is typically overcome by 
one of three strategies: using the energy functional to eliminate an equation,
 exploiting some symmetry of the problem and restricting to connecting orbits which preserve 
 this symmetry, or by introducing an 
``unfolding parameter''.  We follow this last course and exploit the following lemma, 
whose justification is discussed in much greater detail in 
\cite{collisionPaper}.  See also 
\cite{MR1992054}.

\begin{lemma}
Let $G_\beta : \mathbb{R}^9 \to \mathbb{R}^9$ defined by
\[
G_\beta(u) = (0,\beta u_2, 0, \beta u_4, 0, \beta u_6, 0,0,0).
\]
Then, the boundary value problem
\begin{align}\label{BVP:Chebyshev}
\begin{cases}
\dot v (t) = F(v(t)) +G_\beta(v(t)), & \forall t \in (0,2L) \\
v(0)= Q(\theta_u,\phi_u), \\
v(2L)= P(\theta_s,\phi_s),
\end{cases}
\end{align}
admits solutions if and only if $\beta = 0$.
\end{lemma}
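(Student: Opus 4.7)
The strategy is to exploit the fact that the Jacobi integral $H$ is preserved by the unperturbed polynomial vector field $F$, while the additive term $G_\beta$ perturbs $H$ in a sign-definite way proportional to $\beta$. Since the boundary conditions force both endpoints to lie in the energy level set $\{H = H(\gamma)\}$, the total change of $H$ across $[0,2L]$ must vanish, and this will pin down $\beta = 0$. The converse implication is immediate: setting $\beta = 0$ recovers the original conservative BVP, which is precisely what the overall Newton--Kantorovich scheme in Section \ref{Sec:Chebyshev} is designed to solve.

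For the computational heart of the proof, I would first record the gradient of the Jacobi integral directly from Equation \eqref{eq:EnergyPolyEmbedding}, namely
\[
\nabla H(u) = (2u_1,\, -2u_2,\, 2u_3,\, -2u_4,\, 0,\, -2u_6,\, 2m_1,\, 2m_2,\, 2m_3),
\]
and compute the inner product with the perturbation:
\[
\nabla H(u) \cdot G_\beta(u) = -2\beta\bigl(u_2^2 + u_4^2 + u_6^2\bigr).
\]
A direct (but slightly tedious) verification shows that $\nabla H \cdot F \equiv 0$ on all of $\mathbb{R}^9$, since the contributions from the positional terms $u_1^2+u_3^2$, the kinetic terms $-(u_2^2+u_4^2+u_6^2)$, and the $u_{6+i}$-terms telescope exactly against each other when paired with the components of $F$ in \eqref{eq:Fwithoutalpha}. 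Consequently, for every solution $v$ of $\dot v = F(v) + G_\beta(v)$,
\[
\frac{d}{dt} H(v(t)) = -2\beta\bigl(v_2(t)^2 + v_4(t)^2 + v_6(t)^2\bigr).
\]

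To close the argument I would use the boundary data. Because $Q(\theta_u,\phi_u)$ lies on the unstable manifold and $P(\theta_s,\phi_s)$ lies on the stable manifold of $\gamma$ --- both of which are $F$-invariant and therefore contained in the level set $\{H = H(\gamma)\}$ --- we have $H(v(0)) = H(v(2L)) = H(\gamma)$. Integrating the identity above yields
\[
0 \;=\; H(v(2L)) - H(v(0)) \;=\; -2\beta \int_0^{2L} \bigl(v_2^2 + v_4^2 + v_6^2\bigr)\,dt.
\]
Either $\beta = 0$, which is the desired conclusion, or the integrand vanishes identically on $[0,2L]$. The main (and only) delicate point is to rule out this second alternative: if $v_2 \equiv v_4 \equiv v_6 \equiv 0$, then $v_1, v_3, v_5$ are constant, and the equations for the velocities then force $F_{2,4,6}(v) \equiv 0$ as well, so that $v$ is a constant equilibrium of $F$ whose first six coordinates form a libration point of the CRFBP. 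Since the stable and unstable manifolds of a periodic orbit with nontrivial saddle-focus multipliers are uniformly bounded away from the equilibrium set of $F$, the boundary conditions exclude this degenerate case, and we conclude $\beta = 0$.
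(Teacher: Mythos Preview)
Your proof is correct and follows precisely the energy-dissipation strategy the paper itself sketches (and attributes in full to \cite{collisionPaper}): $G_\beta$ drives $H$ monotonically when $\beta\neq 0$, while both boundary points lie in the level set $\{H=H(\gamma)\}$, forcing $\beta=0$ unless the orbit is a constant equilibrium, which the boundary data exclude. The paper does not actually prove the lemma in the text, so your write-up in effect supplies the argument the paper only outlines.
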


The intuition here is that the function $G$ introduces a dissipative
perturbation which breaks the conservative structure.  Indeed, 
one can show that when $\beta \neq 0$ the system ``loses energy'' 
along orbits.  Since the images of $P$ and $Q$ are in the 
energy level of $\gamma$, an orbit segment can begin on one
and end on the other only when the perturbation is not present.  
Again, a rigorous proof is given in \cite{collisionPaper}.

We employ Chebyshev spectral approximation in our analysis of $v$.
We remark that computer assisted proofs for Chebyshev series 
solutions of two point boundary values problems between parameterized
manifolds have been discussed in a number of places including 
\cite{MR3353132,paperBridge,MR4292534}.  Indeed, our implementation makes
use of the domain decomposition techniques discussed in detail in the third reference just
cited, the main difference being that we project onto invariant manifolds 
attached to periodic rather than equilibrium solutions.
  We sketch the main ideas of these computations below.  

\begin{definition}[Chebyshev polynomials]
 Let $T_k:[-1,1] \to \R$ denotes the Chebyshev polynomials. They satisfy the recurrence relation $T_0(t)=1$, $T_1(t)=t$ and
\[
 T_{k+1}(t)=2t T_k(t) - T_{k-1}(t),~ \forall k\geq 1.
\]
An analytic function $f:[-1,1] \to \R$ can be expressed uniquely as
\[
f(t) = a_0 + 2\sum_{k=1}^\infty a_k T_k(t),
\]
where the coefficients decay exponentially as a consequence of Paley-Wiener Theorem. 
\end{definition}
After projecting into the Chebyshev basis, we truncate and solve Equation \eqref{BVP:Chebyshev}
using Newton's method.  Once we have a numerical approximate solution
 we use the a-posteriori techniques developed in 
\cite{MR3392421,MR3207723,LessardReinhardt,MR3353132,paperBridge,RayJB}
to validate the existence of the connecting orbit.  
Since these methods exploit Newton-like operators, we 
obtain transversality in the energy manifold
from the non-degeneracy of the fixed point 
of the Newton-like operator, as in 
\cite{collisionPaper,MR3906230}.
Transversality results for dissipative systems are discussed in 
\cite{MR3207723}.


\begin{definition}[$\ellnu$ norm for sequence of Chebyshev coefficients]
We can see Chebyshev polynomials as a specific case of Fourier expansion, enabling the use of the same coefficient space as for the computation of the periodic orbit. That is, for $a= \left\{ a_k \in \R : k\geq 0 \right\}$, the infinite sequence of Chebyshev coefficients of an analytic function, can be extended into a bi-infinite sequence by setting $a_k= a_{-k}$. So that $a \in \ellnu$ for some $\nu>1$, and the norm can be reduced to
\begin{equation}\label{def:NormCheby}
\left\|a \right\|_{1,\nu} = |a_0| + 2\sum_{k=1}^\infty |a_k|\nu^k.
\end{equation}
A similar simplification allows to use again the convolution product $\star: \ellnu \times \ellnu \to \ellnu$ for all $k\geq 0$ with
\begin{align*}
(a\star b)_k &= \sum_{j\in \mathbb{Z}} a_{j}b_{k-j} \\
&= a_0b_k +\sum_{j=1}^\infty a_{j}(b_{k-j}+b_{k+j}) \\
&= a_0b_k +\sum_{j=1}^\infty a_{j}(b_{|k-j|}+b_{k+j}).
\end{align*}
\end{definition}

After a translation and a rescale of time, the solution of \eqref{BVP:Chebyshev} 
is defined on $[-1,1]$ and therefore can be expressed using a Chebyshev expansion 
for each component. We now rewrite solutions of Equation 
\eqref{BVP:Chebyshev} as a zero of an operator in  Chebyshev 
coefficient space.

%
%
%
%
%
%
%
%
%
\begin{definition}
Let $x \in \R^6 \times (\ellnu)^9$. We write
\[
x= (L,\theta_u,\phi_u,\theta_s,\phi_s,\beta,a^1,\hdots,a^9),
\]
where $L$ is the half-frequency, $\beta$ is the unfolding parameter,
 the pairs $\theta_u,\phi_u$ and $\theta_s,\phi_s$ are evaluation of the 
 unstable and stable parameterization respectively, 
 and $a^i \in \ellnu$ are the coefficients of the Chebyshev expansion
  of the components of the solution.  Set
\begin{align*}
\Lambda_k: \ellnu &\to \mathbb{C}\\
h &\mapsto h_{k+1} - h_{k-1},
\end{align*}
for $k \geq 1$. The operator represents integration of a 
Chebyshev series, and allows us to simplify the 
operator.  Define $H:\mathbb{R} \times (\ellnu)^9 \to (\ell_{\nu'})^9$, 
where $\nu' < \nu$, arising from  projecting 
$\dot u = F(u) +\beta G(u)$ into Chebyshev coefficient space.  Here 
\begin{equation*}
\small
H_k(\beta,a) =\begin{pmatrix}
  a_k^{2}  \\
\beta a_k^2 +2a_k^{4} +a_k^{1} -
\displaystyle\sum_{i=1}^3 m_i\left(a^{1i}\star a^{6+i}\star a^{6+i} \star a^{6+i}\right)_k \\ 
a_k^{4} \\
\beta a_k^4 -2a_k^{2} +a_k^{3} -
\displaystyle\sum_{i=1}^3 m_i\left(a^{3i}\star a^{6+i}\star a^{6+i} \star a^{6+i}\right)_k \\ 
a_k^{6} \\
\beta a_k^6 -\displaystyle \sum_{i=1}^3 m_i\left(a^{5i}\star a^{6+i}\star 
a^{6+i} \star a^{6+i}\right)_k \\  \\
-\left(a^{11}\star a^{2}\star a^{7}\ast a^{7}\star a^{7}\right)_k 
-\left(a^{31}\star a^{4}\star a^{7}\star a^{7}\star a^{7}\right)_k 
-\left(a^{51}\star a^{6}\star a^{7}\star a^{7}\star a^{7}\right)_k \\
-\left(a^{12}\star a^{2}\star a^{8}\ast a^{8}\star a^{8}\right)_k 
-\left(a^{32}\star a^{4}\star a^{8}\star a^{8}\star a^{8}\right)_k
 -\left(a^{52}\star a^{6}\star a^{8}\star a^{8}\star a^{8}\right)_k \\
-\left(a^{13}\star a^{2}\star a^{9}\ast a^{9}\star a^{9}\right)_k
 -\left(a^{33}\star a^{4}\star a^{9}\star a^{9}\star a^{9}\right)_k
  -\left(a^{53}\star a^{6}\star a^{9}\star a^{9}\star a^{9}\right)_k \\
 \end{pmatrix}.
\end{equation*}
A function $u:[0,2L] \to \mathbb{R}^9$ solves $\dot u = F(u) +\beta G(u)$ with initial
 condition $u(0)= Q(\theta_u,\phi_u)$ if the Chebyshev coefficients of the function
\begin{align*}
v: [-1,1] &\to \mathbb{R}^9 \\
	t &\mapsto  u\left( (t+1)L \right)
\end{align*}
are a zero of the operator with components 
\[
F_k^i(x) =
\begin{cases}
 \left(a_0^i +2\displaystyle \sum_{l=1}^\infty (-1)^la_l^i \right) - Q^i(\theta_u,\phi_u), & k=0, \\
 2ka_k^i -L\Lambda_k(H^i(\beta,a)), & k\geq 1.
\end{cases}
\]
The boundary conditions in 
Equation \eqref{BVP:Chebyshev} are satisfied whenever $x$ is a zero of 
\[
\eta^i(x)= \left(a_0^i +2\sum_{l=1}^\infty a_l^i \right) - P^i(\theta_s,\phi_s), 
\quad \forall i=1,2,\hdots, 6.
\]
Gathering both operator, we define 
\begin{align*}
\mathcal{F}: \R^6 \times (\ellnu)^9  &\to \mathbb{R}^6 \times \left( \ell_{\nu'} \right)^9 \\
\mathcal{F}(x) &\mapsto \left( \eta^1(x),\hdots,\eta^6(x),F^1(x),\hdots,F^9(x) \right).
\end{align*}
\end{definition}
%
%
%
%
%
%
%
%
%
%

\begin{definition}[Chebyshev domain decomposition]
For large values of $L$, it is inconvenient to use a single Chebyshev series
to represent the connecting orbit segment, as
the coefficients will delay at a slow rate. 
This can be offset using domain decomposition. Fix $M$ and set
\[
L = \sum_{j=1}^M L_j,
\]
where each $L_j$ is a positive constant. For $j=1,\hdots,M$, define
\begin{align*}
v_j:[-1,1] &\to \mathbb{R}^9 \\
 t &\mapsto u\left( \sum_{m=1}^{j-1}L_m +(t+1)L_j \right),
\end{align*} 
so that the family $\{ v_j : j=1,\hdots,M \}$ represents a piece-wise representaiton of $u$,
the solution of Equation \eqref{BVP:Chebyshev}. 
By construction, each $v_j$ is expressed using a Chebyshev series
whose coefficients $a^j \in (\ellnu)^9$ are a zero, for all $1\leq i \leq 9$, of
\[
F_k^{i,j}(\beta,a^{j}) =
\begin{cases}
 \left(a_0^j +2\displaystyle \sum_{m=1}^\infty (-1)^m a_m^j \right) - C_0^{i,j}, & k=0, \\
 2ka_k^i -L_j\Lambda_k(H^i(\beta,a^j)), & k\geq 1.
\end{cases}
\]
The initial condition $C_0^{i,j}$ provides the initial value of the Chebyshev expansion. 
For $j=1$ and $1\leq i \leq 9$, it is given by $C_0^{i,1} = Q^i(\theta_u,\phi_u)$, while 
the cases of $2\leq j \leq M$ and 
$1\leq i \leq 9$, are given by
\[
C_0^{i,j} = a_0^{i,j-1} +2\displaystyle \sum_{m=1}^\infty a_m^{i,j-1}.
\]
The choice of initial condition arises from the continuity of $u$.
\end{definition}

\begin{remark}[Bounding derivatives of the local invariant manifold parameterizations] \label{rem:derivatives}
Computation of the $Y$ and $Z$ bounds associated with Equation  
\eqref{BVP:Chebyshev} (projected into Chebyshev coefficient space) requires evaluation 
of first and second derivatives of $P = P^N + R$ and $Q = Q^N + S$.  Derivatives of the 
polynomial parts are computed formally, while derivatives of the remainders 
(with respect to both Fourier and Taylor variables)
are bound using the Estimates discussed in Appendix \ref{sec:CauchyBound}.  
\end{remark}

\section{Bounds on the tail of the Fourier-Taylor parameterization}\label{sec:TailArgument}
To simplify notation, we introduce notation related to the Banach space of coefficients 
in which the validation is performed. 
\begin{definition}
Fix $N_t>1$, and let
\begin{align*}
X_2^{N_t} &\bydef \left\{ x \in X_2 : x_{\alpha} = 0, ~\forall |\alpha|>N_t \right\},  \\
\quad \mbox{and}\\
X_2^{\infty} &\bydef \left\{ x \in X_2 : x_{\alpha} = 0,~\forall |\alpha|\leq N_t \right\}. 
\end{align*}
\end{definition}
One verifies that $X_2$ is a direct sum of these subspaces, that they inherit the norm 
on $X_2$, and that $X_2^\infty$ is closed under the Cauchy-Convolution 
product.  $X_2^\infty$ is a Banach space where we formulate the fixed point argument 
for the Fourier-Taylor truncation error argument.

As previously discussed,  $0 \in X_2^\infty$ is a good approximation of the true 
solution $a^\infty$ by construction of the truncation. 
To prove this claim, we show that the disk centered at $0 \in X_2^\infty$ contains a 
unique fixed point of $\mathcal{T}:X_2^\infty \to X_2^\infty$ 
defined as 
\[
\mathcal{T}(x) = \left\{ - \mathcal{G}_\alpha^{-1} 
\mathcal{L}_{\alpha}^{-1} R_\alpha(a^{N_t} + x) : |\alpha| > N_t \right\},
\]
where $a^{N_t} \in X_2^{N_t}$ are the exact Fourier-Taylor coefficients of order up to $N_t$. The operator is well defined thanks to Theorem \ref{Thm:InverseG}, and its norm is
 computed with the help of Equation \eqref{eq:TboundNorm}. %
%
%
%
%
%

\subsubsection{The $Y$ bound}

We seek a constant $Y$ so that $\left\|\mathcal{T}(a^{N_t})\right\|_{X_2^\infty} \leq Y$. 
Note that $a^{N_t}$ has non-zero components up to Taylor order $N_t$,
 and that the CRFBP vector field is fifth degree polynomial so that non-zero entries do not exceed order $5N_t$. Hence 
 $\|T(a^{N_t})\|_{X_2^\infty}$ is a finite sum in the Taylor direction, and is evaluated with the help of a computer. 
 More precisely, we first note that
\begin{equation*}
\left\| \mathcal{T}(a^{N_t}) \right\|_{X_2^\infty} \leq 
\max_{i=1,\hdots,9} \sum_{|\alpha|= N_t+1}^{5N_t} \| 
\mathcal{T}_\alpha^i (a^{N_t}) \|_{1,\nu} \leq  B_G 
\cdot \max_{i=1,\hdots,9} \sum_{|\alpha|= N_t+1}^{5N_t} 
\left\| \mathcal{L}_\alpha^{-1}R_\alpha^i (a^{N_t}) \right\|_{1,\nu}.  
\end{equation*}
Since the non zero terms are given by Cauchy-Convolution products, 
we compute positive constants $b_{\alpha,k}^2,b_{\alpha,k}^4,b_{\alpha,k}^6$ having
\begin{align*}
\left|\frac{R_{\alpha,k}^2(a^{N_t})}{\im \omega k -\langle 
\alpha,\lambda \rangle}\right| &\leq \sum_{i=1}^3 m_i\left| \frac{\left(a^{1i} \hat 
\ast a^{6+i} \hat \ast a^{6+i} \hat \ast x^{6+i}\right)_{\alpha,k}}{\im \omega k 
-\langle \alpha,\lambda \rangle}\right| \leq b_{\alpha,k}^2,\\
\left|\frac{R_{\alpha,k}^4(a^{N_t})}{\im \omega k -\langle 
\alpha,\lambda \rangle}\right| &\leq \sum_{i=1}^3 m_i\left| \frac{\left(a^{3i} \hat
 \ast a^{6+i} \hat \ast a^{6+i} \hat \ast x^{6+i}\right)_{\alpha,k}}{\im \omega k 
 -\langle \alpha,\lambda \rangle}\right| \leq b_{\alpha,k}^4,\\
\left|\frac{R_{\alpha,k}^6(a^{N_t})}{\im \omega k -\langle \alpha,\lambda \rangle}\right| &\leq \sum_{i=1}^3 m_i\left| \frac{\left(a^{5i} \hat \ast a^{6+i} \hat \ast a^{6+i} \hat \ast x^{6+i}\right)_{\alpha,k}}{\im \omega k -\langle \alpha,\lambda \rangle}\right| \leq b_{\alpha,k}^6.\\
\end{align*}
The last three components 
$b_{\alpha,k}^7,b_{\alpha,k}^8,b_{\alpha,k}^9$ are defined similarly, 
so that we discuss only the first case.  That is 
\begin{align*}
 \left| \frac{\left(a^{11}\hat\ast a^2 \hat \ast a^7 \hat\ast 
 a^7 \hat \ast a^7\right)_{\alpha,k}}{\im \omega k -\langle \alpha,\lambda \rangle}\right| 
+\left| \frac{\left(a^{31}\hat\ast a^4 \hat \ast a^7 \hat\ast a^7 \hat
 \ast a^7\right)_{\alpha,k}}{\im \omega k -\langle \alpha,\lambda \rangle}\right| 
+\left| \frac{\left(a^{51}\hat\ast a^6 \hat \ast a^7 \hat\ast a^7 \hat \ast a^7\right)_{\alpha,k}}{\im \omega k -\langle \alpha,\lambda \rangle}\right| \leq b_{\alpha,k}^7. \\
\end{align*}
The remainder is zero in all components where the original vector field is linear. So, 
it suffices to set $b_{\alpha,k}^1 = b_{\alpha,k}^3 = b_{\alpha,k}^5 =0$ for all $k$ and $\alpha$. 
Using this notation, we obtain the bound 
\begin{align*}
Y \bydef B_G \cdot \max_{1\leq i \leq 9} \displaystyle\sum_{|\alpha|=N_t+1}^{5N_t} \|b_{\alpha}^i\|_{1,\nu}.
\end{align*}
Each $b^i$ is computed via the approach discussed in
Section \ref{Sec:RewritingTail}, which allows us to bound the convolution products.
We illustrate the procedure for one example term. 
The other cases are similar.

\begin{example}[Bound on convolution product]
We separate each component of $a^{N_t}$ into a piece containing all Fourier 
coefficients of order lower than $K$, and one piece containing the remaining ones. 
That is, for $1\leq i \leq 9$ define $x^i \in X_2^{N_t}$
\begin{align*}
x_{\alpha,k}^i = \begin{cases}
(a_{\alpha,k}^i)^{N_t}, &\mbox{if}~ |k|<K,~ 0<|\alpha|\leq N_t, \\
0, & \mbox{otherwise},
\end{cases}
\end{align*} 
and $R^i \in X_2^{N_t}$ such that $\left(a_{\alpha}^{N_t}\right)^i = x_{\alpha}^i 
+ R_{\alpha}^i$ for all $0<|\alpha|\leq N_t$. It follows that
\[
\left\| R^i \right\|_{X_d} \leq \sum_{|\alpha|=1}^{N_t} r_\alpha \bydef E^{N_t},
\]
where each $r_\alpha$ is given by the successive applications of Theorem 
\ref{thm:ContractionMapping}  described in Section \ref{sec:lowOrderTerms}. 
The convolution term $\left(a^{11} \hat \ast a^{7} \hat \ast a^{7} \hat \ast a^{7}\right)_{\alpha,k}$ 
satisfies 
\begin{align}
\left(a^{11} \hat \ast a^{7} \hat \ast a^{7} \hat \ast a^{7}\right)_{\alpha,k} &= 
\left(x^{11} \hat \ast x^{7} \hat \ast x^{7} \hat \ast x^{7}\right)_{\alpha,k}
 + \left(R^{11} \hat \ast x^{7} \hat \ast x^{7} \hat \ast x^{7}\right)_{\alpha,k} 
 +3\left(x^{11} \hat \ast x^{7} \hat \ast x^{7} \hat \ast R^{7}\right)_{\alpha,k} \label{eq:line1} \\
&+3\left(R^{11} \hat \ast R^{7} \hat \ast x^{7} \hat \ast x^{7}\right)_{\alpha,k} 
+3 \left(x^{11} \hat \ast x^{7} \hat \ast R^{7} \hat \ast R^{7}\right)_{\alpha,k} 
+\left(x^{11} \hat \ast R^{7} \hat \ast R^{7} \hat \ast R^{7}\right)_{\alpha,k} \label{eq:line2}\\
&+3\left(R^{11} \hat \ast R^{7} \hat \ast R^{7} \hat \ast x^{7}\right)_{\alpha,k} 
+ \left(R^{11} \hat \ast R^{7} \hat \ast R^{7} \hat \ast R^{7}\right)_{\alpha,k} \label{eq:line3}
\end{align}

The first term, at line \eqref{eq:line1}, is computed explicitly using interval arithmetic as each $x^i$
 has only finitely many non zero coefficients. The remaining terms are bound using
 Proposition \ref{EstimatesBound}. The computation uses the dual bound estimates of 
\ref{EstimatesBound} for terms of order $1$ in $R$ in equation \eqref{eq:line1}. 
The other cases, in equations \eqref{eq:line2} and \eqref{eq:line3}, are expected to be small and can be bounded
using the second norm estimates \ref{EstimatesBound}. The resulting bound defines one term of 
the desired  $b_{\alpha,k}^{2}$ term.
\end{example}

This example completes the presentation for the computation of the $Y$ bound. 
It follows from this definition that $\|T(a^N)\|_{X_2^\infty} \leq Y$, as desired. 
%
%
%
%
%
%
%
%
%
\subsubsection{The $Z$ bound}

The next matter of interest is to obtain the polynomial bound $Z(r)$. 
Note that it is enough to compute an upper bound on
\[
 \sup_{b,c \in \overline{B_r(0)}}\| D\mathcal{T}(a^{N_t} +b)c\|_{X_2^\infty},
\]
and note that --from linearity of the operators involved in the definition of $\mathcal{T}$ --
we have that
\begin{align}\label{eq:Zdefinition}
\left( D\mathcal{T}(a^{N_t} +b)c \right)_\alpha = \mathcal{G}_\alpha^{-1} \mathcal{L}_\alpha^{-1} \left( DR(a^{N_t} +b)c\right)_\alpha,~\forall~ |\alpha|>N_t.
\end{align}
After the substitution $b= ur$, $c= vr$ 
where $u,v \in X_2^\infty$ are both elements of norm one, equation \eqref{eq:Zdefinition} becomes a degree five polynomial in $r$. The coefficients for all terms 
of degree one are treated with more caution, while the terms of higher degree are bounded 
using the Banach algebra. Thus, we focus on bounding the linear terms in $r$. 

Every such terms is a convolution product of the form
\[
(a^{N_t})^{i_1}\hat\ast(a^{N_t})^{i_2}\hat\ast(a^{N_t})^{i_3}\hat\ast v^{i_4} \quad \mbox{or} 
\quad (a^{N_t})^{i_1}\hat\ast(a^{N_t})^{i_2}\hat\ast(a^{N_t})^{i_3}\hat\ast (a^{N_t})^{i_4} \hat\ast v^{i_5}, 
\]
and each convolution is handled separately. We show explicitly the computation of the bound 
in the case of a fifth degree convolution, and note that the fourth degree case is similar.  Set
\begin{equation}\label{eq:aquadracticDef}
c^{ij} = (a^{N_t})^{i_1}\hat\ast(a^{N_t})^{i_2}\hat\ast(a^{N_t})^{i_3}\hat\ast (a^{N_t})^{i_4},
\end{equation}
where the index $i,j$ denotes that the convolution product arise as a term from 
$D_{a_j}R^i(a^{N_t}+b)c$. For any pair $i,j$, the convolution product $c^{ij}$ has only finitely 
many non-zero coefficients in the Taylor direction.  Again, this is a consequence of the fact 
that $a^{N_t} \in X_2^{N_t}$. This lets us reduce the 
expansion of each norm computation, for example 
\begin{align*}
\left\|\sum_{|\alpha|>N_t}\mathcal{L}_\alpha^{-1}(c^{ij}\hat\ast v^j) \right\|_{X_d^\infty} &= \sum_{|\alpha|>N_t}
\sum_{k\in\mathbb{Z}} \left| 
\frac{(c^{ij}\hat\ast v^{j})_{\alpha,k}}{\im \omega k +\langle \alpha,\lambda \rangle} \right|\nu^{|k|} \\
&\leq \sum_{|\alpha|>N_t} \sum_{|k|<K} \left| d_{\alpha,k} (c^{ij}\hat\ast v^{j})_{\alpha,k}\right|\nu^{|k|}
+ \sum_{|\alpha|>N_t} \sum_{|k|\geq K} \left| 
d_\alpha^\infty (c^{ij}\hat\ast v^{j})_{\alpha,k}\right|\nu^{|k|} \\
&\leq \sum_{|\alpha|>N_t} \sum_{|k|<K} \left| d_{\alpha,k}(c^{ij}\hat\ast v^{j})_{\alpha,k}\right|\nu^{|k|}
+ \left\|c^{ij}\right\|_{X_d} \cdot \sup_{|\alpha|>N_t} d_\alpha^\infty  \\
\end{align*}
Where $d_{\alpha,k}$ and $d_\alpha^\infty$ are as defined in \eqref{eq:Ltermwise}. For the first sum, representing the finite part in Fourier direction, we set
\[
d_k = \sup_{|\alpha|>N_t}\left| d_{\alpha,k} \right|.
\]
Note that the supremum exists for the same reason as in the case of equation \eqref{eq:NormLinverse}. Then 
\begin{align*}
\sum_{|\alpha|>N_t} \sum_{|k|<K} \left| 
d_{\alpha,k}(c^{ij}\hat\ast v^{j})_{\alpha,k}\right|\nu^{|k|}
&= \sum_{|\alpha|>N_t} \sum_{|k|<K} \left|  
d_{\alpha,k}
\sum_{\substack{ 1\leq |\beta| \leq 4N_t \\ \alpha-\beta \in \mathbb{Z}_d^+ }} 
\left( c_\beta^{ij}\star v_{\alpha-\beta}^{j}\right)_k\right|\nu^{|k|} \\
&\leq \sum_{|k|<K} d_k \sum_{|\alpha|>N_t} \left| 
\sum_{\substack{ 1\leq |\beta| \leq 4N_t \\ \alpha-\beta \in \mathbb{Z}_d^+ }}
 \left( c_{\beta}^{ij}\star v_{\alpha-\beta}^{j}\right)_k\right|\nu^{|k|} \\
&\leq \sum_{|k|<K} d_k \sum_{|\alpha|>N_t} 
\sum_{\substack{ 1\leq |\beta| \leq 4N_t \\ \alpha-\beta \in \mathbb{Z}_d^+ }} 
\left| \left( c_{\beta}^{ij}\star v_{\alpha-\beta}^{j}\right)_k\right|\nu^{|k|} \\
&\leq \sum_{|k|<K} \sum_{ 1\leq |\beta| \leq 4N_t } \sup_{\substack{m\in \mathbb{Z} \\ 
 1\leq |b| \leq 4N_t  }} \frac{d_k c_{b,k-m}^{ij}}{\nu^{|m|}}
  \sum_{|\alpha|>N_t}\sum_{m \in \mathbb{Z}}  \left| 
  \left(v_{\alpha-\beta,m}^{j}\right)_k\right|\nu^{|m|}\nu^{|k|} \\
&\leq \sum_{|k|<K} \sum_{ 1\leq |\beta| \leq 4N_t } d_k
 \left(\sup_{\substack{m\in \mathbb{Z} \\  
 1\leq |b| \leq 4N_t  }} \frac{c_{b,k-m}^{ij}}{\nu^{|m|}}\right) \nu^{|k|} \\
&\leq  (4N_t(4N_t+1))\sum_{|k|<K} d_k
\left(\sup_{\substack{m\in \mathbb{Z} \\  
1\leq |b|\leq 4N_t }} \frac{c_{b,k-m}^{ij}}{\nu^{|m|}} \right) \nu^{|k|} \bydef Z_K^{ij}. \\
\end{align*}
Here the last two inequalities follow from the application of Proposition \eqref{EstimatesBound},
the fact that $\left\| v \right\|_{X_d^\infty}= 1$, and the fact that the norm is unchanged 
from shifting the Taylor index. Finally, the last expression is independent of
$\beta$ and the sum is evaluated directly to obtain $4N_t(4N_t+1)$. Combining this estimate with the previous computation gives that
\[
\left\| \sum_{|\alpha|>N_t} \mathcal{L}_\alpha^{-1}\left( c^{ij} \hat\ast v^{j} \right) \right\|_{X_d^\infty} \leq Z_K^{ij} + \|c^{ij}\|_{X_d}
 \cdot \sup_{|\alpha|>N_t} d_\alpha^\infty \bydef Z_1^{ij}.
\]

This bound is presented with sufficient generality to treat all terms of order one in $r$, using the triangle inequality for cases with several distinct convolutions. 
It follows that 
\begin{equation}
Z_1 = B_G \cdot \max \begin{pmatrix}
\displaystyle\sum_{j=1}^3 m_j\bigg(Z_1^{21} +3Z_1^{2j}\bigg) \\
\displaystyle\sum_{j=1}^3 m_j\bigg(Z_1^{43} +3Z_1^{4j}\bigg) \\
\displaystyle\sum_{j=1}^3 m_j\bigg(Z_1^{65} +3Z_1^{6j}\bigg) \\
3Z_1^{77} +\displaystyle\sum_{j=1}^6 Z_1^{7j} \\
3Z_1^{88} +\displaystyle\sum_{j=1}^6 Z_1^{8j} \\
3Z_1^{99} +\displaystyle\sum_{j=1}^6 Z_1^{9j} \\
\end{pmatrix},
\end{equation}
so that
\[
 \sup_{b,c \in \overline{B_r(0)}}\| DT(x^N +b)c\|_{X^\infty} \leq Z_1 r + O(r^2).
\]
Again, terms of higher order do not need small coefficients for the 
argument to succeed.  Their explicit calculation is omitted in the present work, 
and we refer the interested reader to \cite{Ransford} for an example of the 
development of the bounds in the case of the CRTBP. This completes the 
computation of the polynomial required to verify Theorem \ref{thm:ContractionMappingFixedPt}, 
and allows us to construct the Radii polynomial associated to the estimates.
If the polynomial is negative for some positive radius, it follows that there exists 
an $0<r^\infty<\infty$ such that
 \[
\left\|\sum_{|\alpha|=N_t+1}^\infty  A_\alpha(t) \sigma^\alpha \right\|_\infty \leq r^\infty.
\]
Recall that this term arise from regrouping the terms not yet bounded in \eqref{Error:finite}. 
Hence, it is now possible to bound the truncation error associated to $\bar P$. That is, we have 
\begin{align}
\|P - \bar P\|_\infty & \leq \sum_{|\alpha|=0}^{N_t} r_\alpha  + 
\left\|\sum_{|\alpha|=N_t+1}^\infty  A_\alpha(t) \sigma^\alpha \right\|_\infty \nonumber \\
&\leq \sum_{|\alpha|=0}^{N_t} r_\alpha  + r^\infty \bydef r_{\bar P}. \label{Error:Full}
\end{align}
Note that the error bound depends on the approximation itself, and we express the 
dependency using the subscript $\bar P$. This specification is crucial when a pair of 
distinct validated manifolds are computed and used to define the two point boundary 
value problem of interest to compute cycle-to-cycle connecting orbits.

\section{Applications to the spatial CRFBP}\label{Sec:Results}

In this section we describe computer assisted existence proofs for 
a number of transverse homoclinic connections to spatial periodic orbits
in the CRFBP.  More precisely, we present the results of twelve 
validations: six of them involving the 
vertical Lyapunov family at $\mathcal{L}_5$ in the 
Triple Copenhagen Problem (equal masses $m_1=m_2=m_3 = 1/3$),
and six more validations for the CRFBP with non-equal masses 
 $m_1= 0.4$, $m_2= 0.33$, and $m_3= 0.27$.  We remark that in the 
 Triple Copenhagen problem, each proof of a connecting orbit at 
 $L_0$ leads to the existence of two connections more at $L_0$, 
 and each proof of a connecting orbit at $L_5$ leads to one at $L_4$ and 
 $L_6$ --  both facts by $120$ degree rotational symmetry.
 Then our results actually imply the existence of 18 distinct connections
 in the Triple Copenhagen problem.  
For the case of non-equal masses on the other hand, every connection 
must be proven singly.  We note also that, because of the transversality, 
each of the 24 total orbits proven here 
implies the existence of chaotic dynamics nearby, and 
of homoclinic orbits for nearby energies, 
though we do not obtain bounds on the range of energies here.  

The next two Theorems summarize our results, which 
constitute the main results of the paper.  We remark however that, using the 
technology developed here, many other similar results could be proven.

%

\begin{figure}[!t]
	\subfigure{{\includegraphics[width=.48\textwidth]{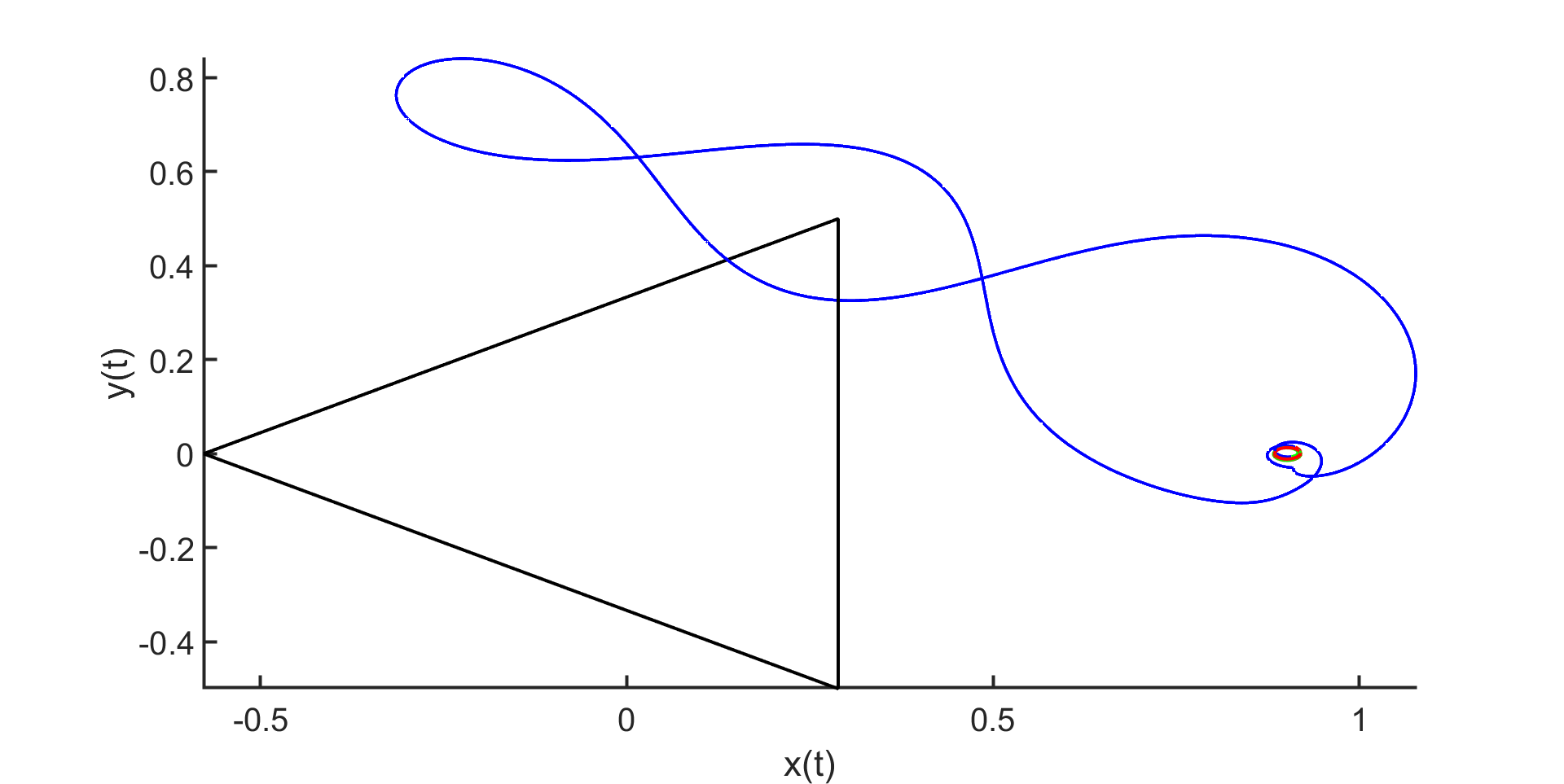}}} 
	\subfigure{{\includegraphics[width=.48\textwidth]{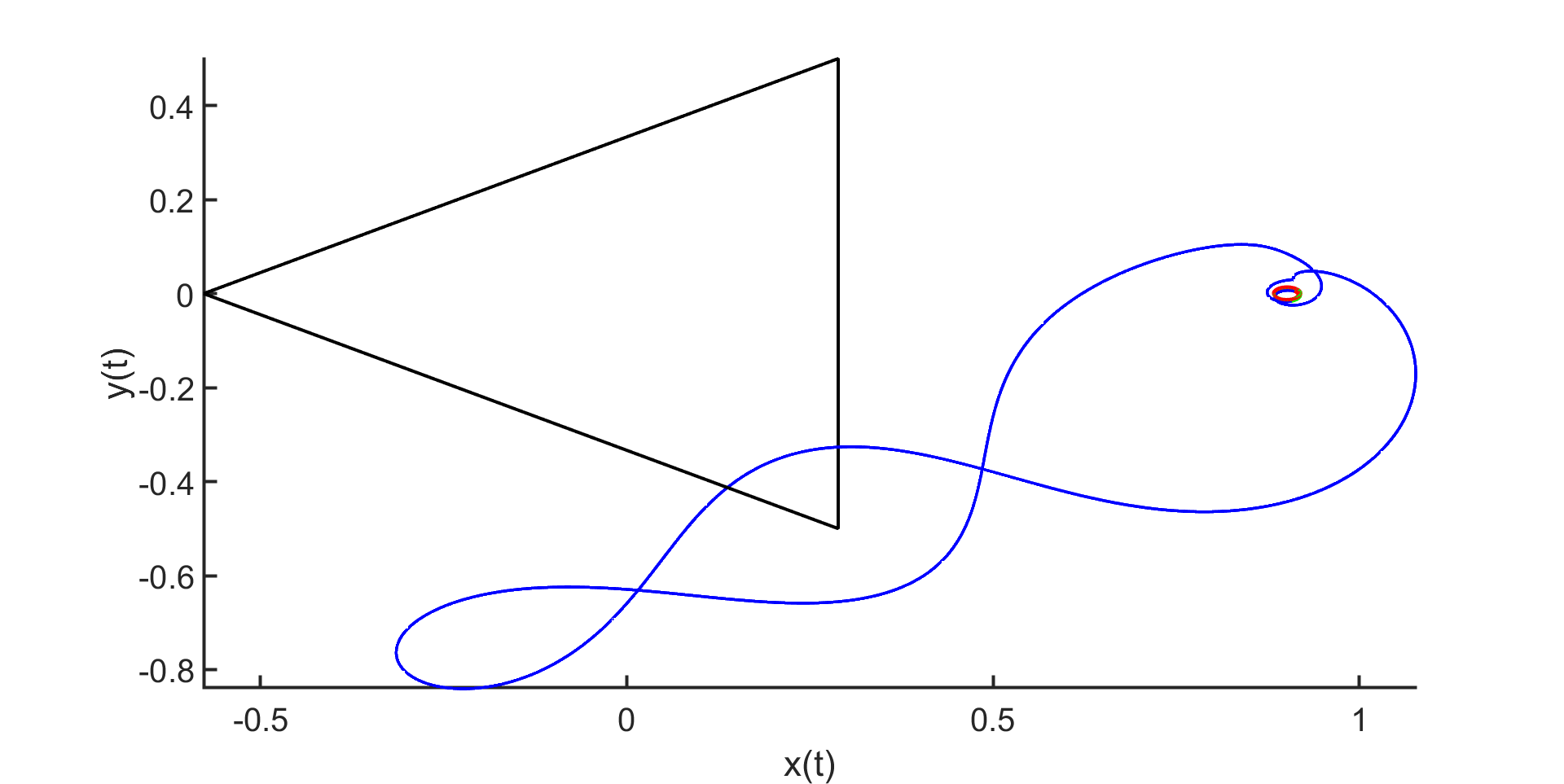}}} \\    
    \subfigure{{\includegraphics[width=.48\textwidth]{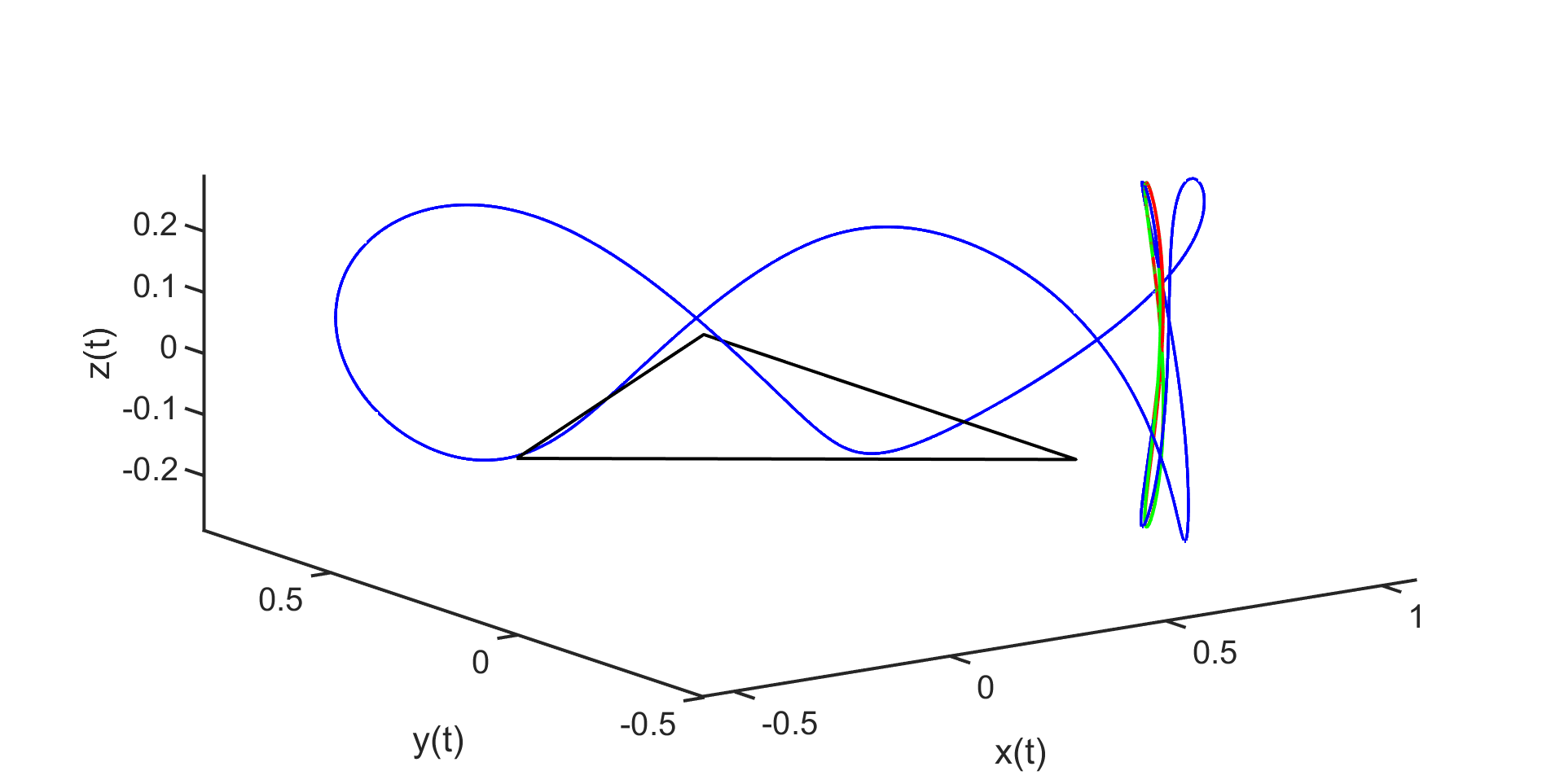}}} 
    \subfigure{{\includegraphics[width=.48\textwidth]{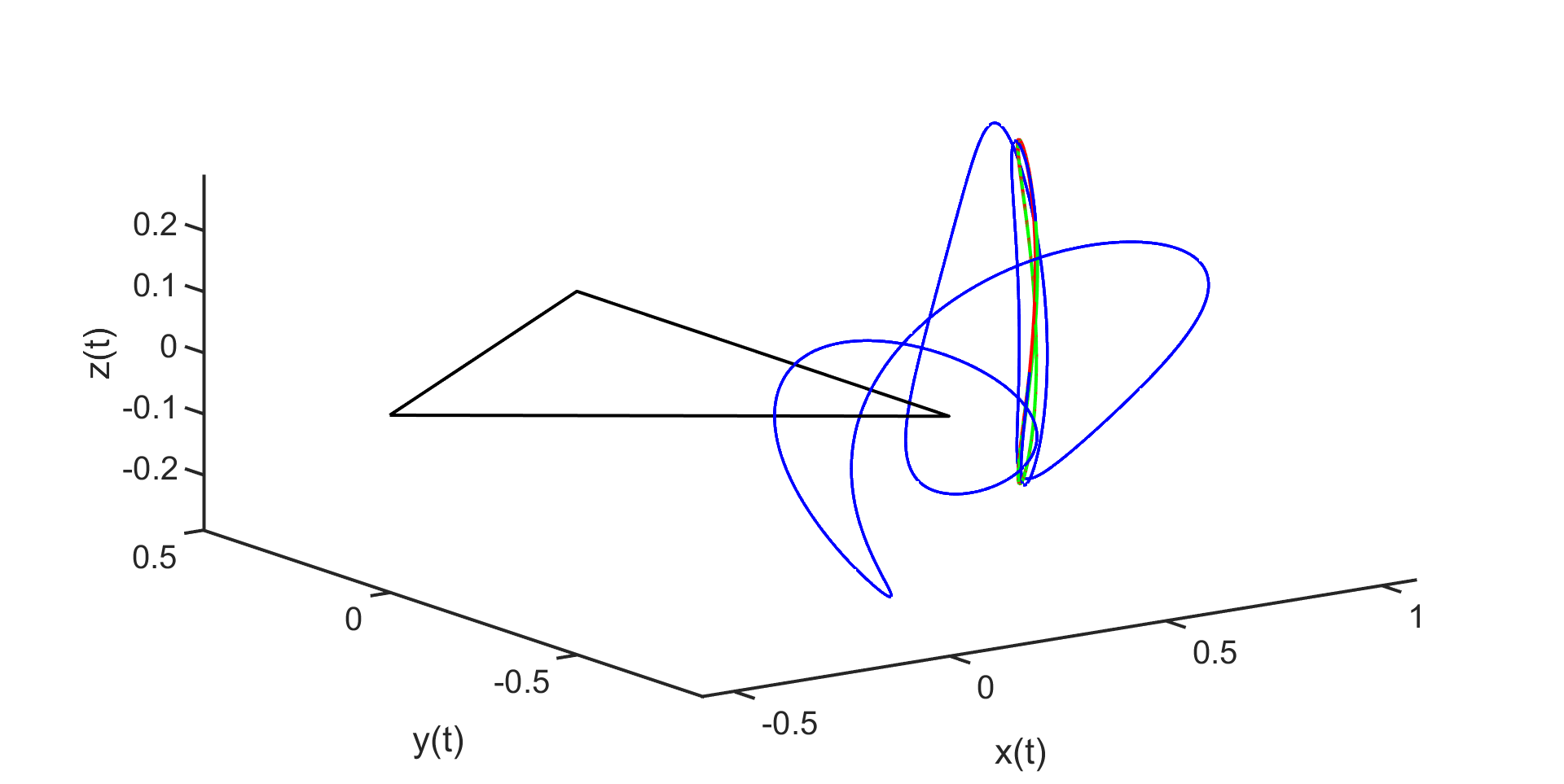}}}\\    
\caption{Two homoclinic connections with non-trivial winding about a primary, 
for a spatial periodic orbit in the vertical Lyapunov 
family at $\mathcal{L}_5$ in the triple Copenhagen problem. The top row displays the 
projection in the $x-y$ plane, while the bottom row illustrates the 
$x-y-z$ projection. Notice from the second angle that both connections are 
indeed out of plane. 
Each of the connections is validated using Theorem \ref{thm:ContractionMapping}, 
so that a true solution lays in a neighborhood of the approximation displayed. The size 
of the neighborhood in the $C^0$ norm is specified for each case in 
Theorem \ref{thm:ValidationL5}.  The Jacobi constant for these cycle-to-cycle
connections is approximately $2.84$.        
}\label{fig:homoclinicL5A-B}
\end{figure}

\begin{figure}[!t]

 	\subfigure{{\includegraphics[width=.48\textwidth]{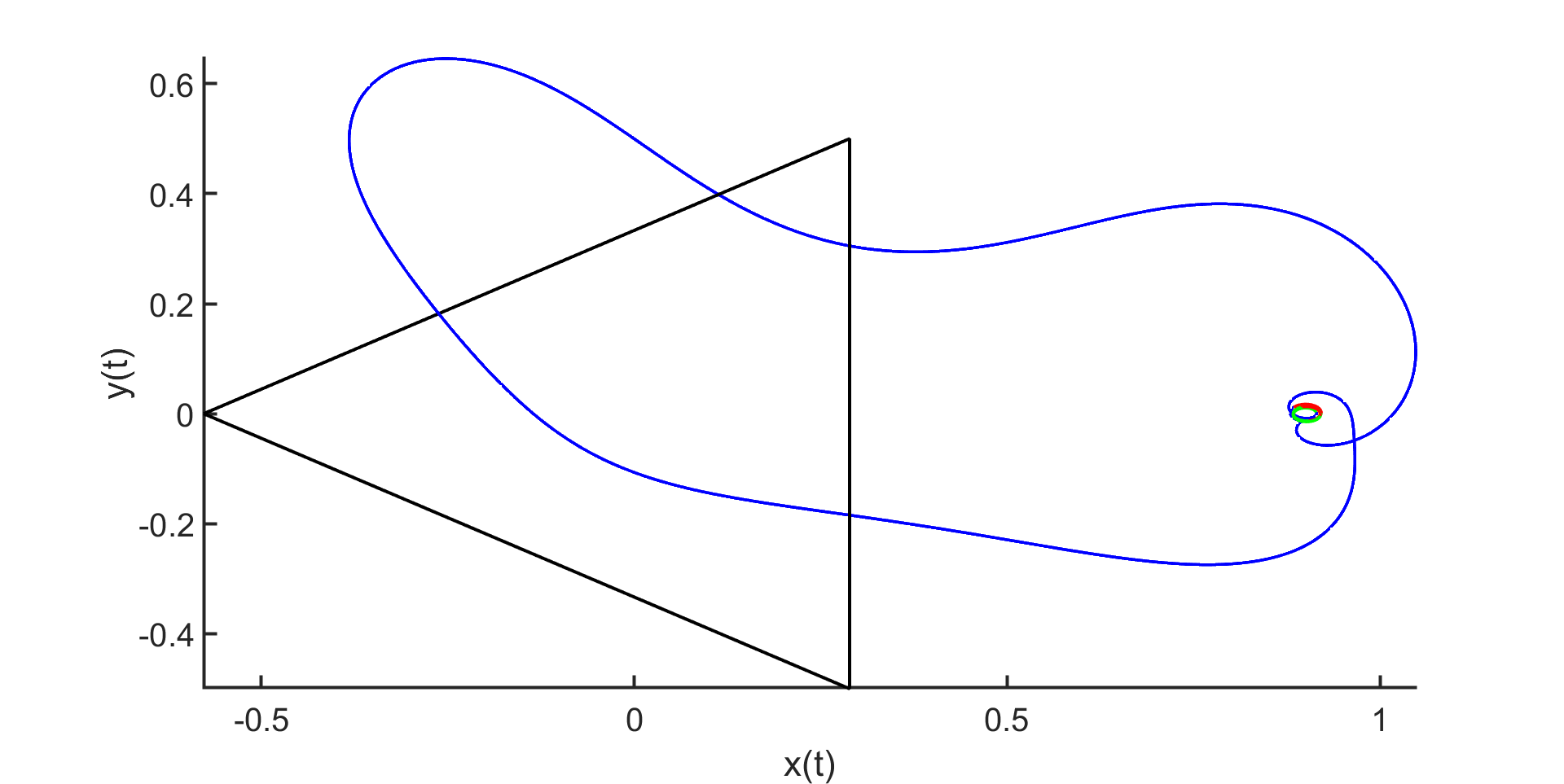}}}    
    \subfigure{{\includegraphics[width=.48\textwidth]{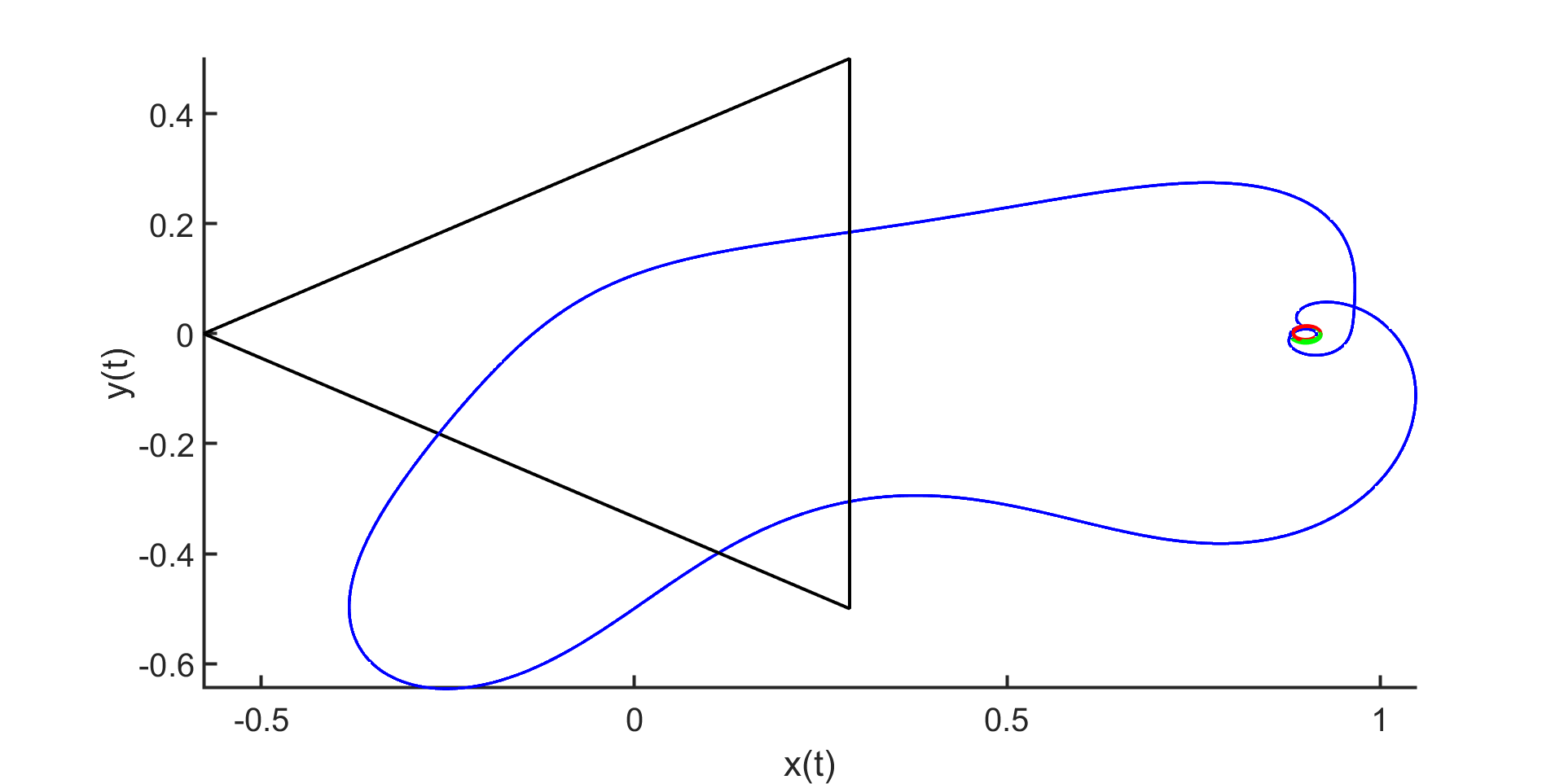}}} \\ 
    \subfigure{{\includegraphics[width=.48\textwidth]{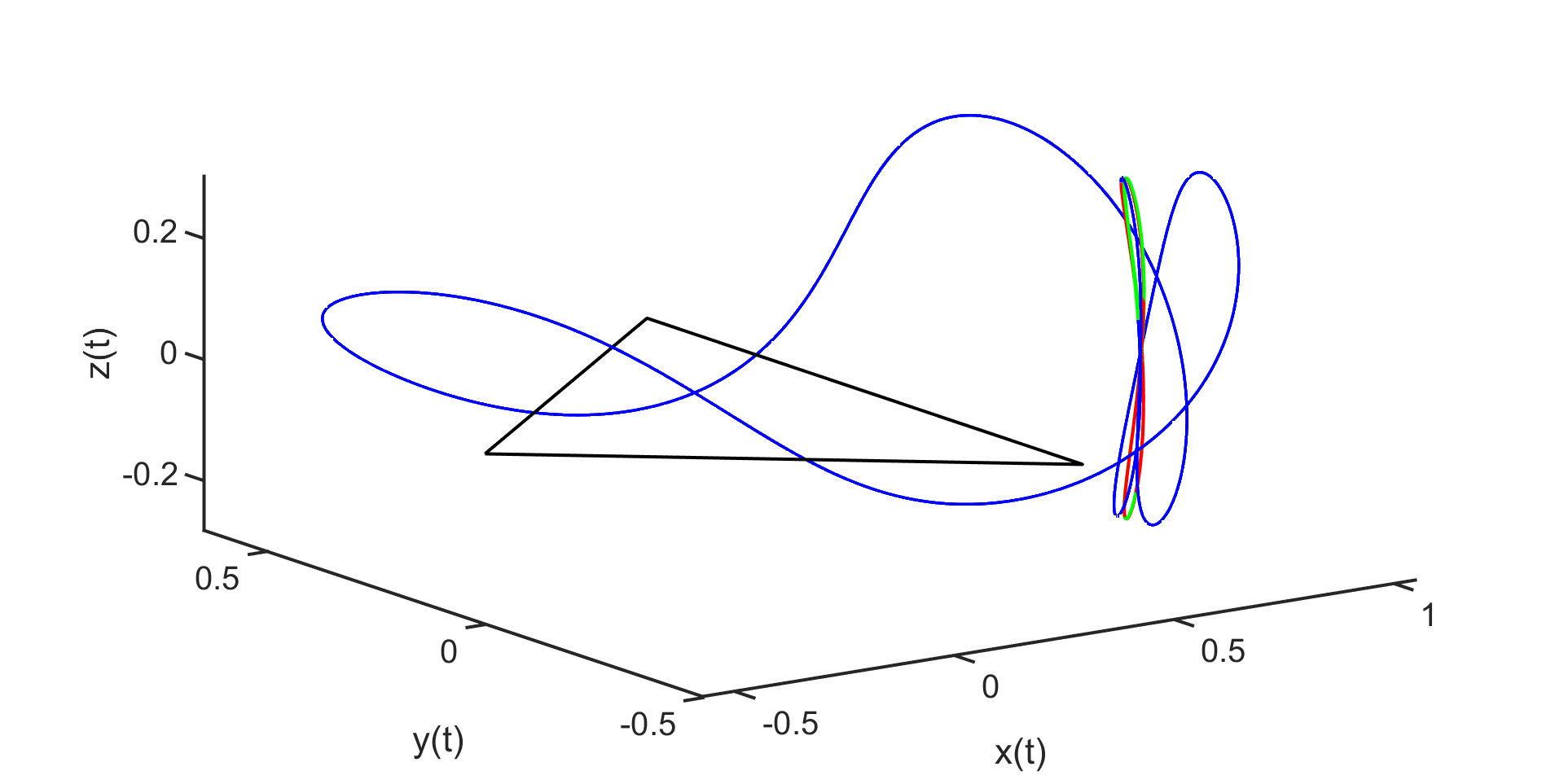}}}     
    \subfigure{{\includegraphics[width=.48\textwidth]{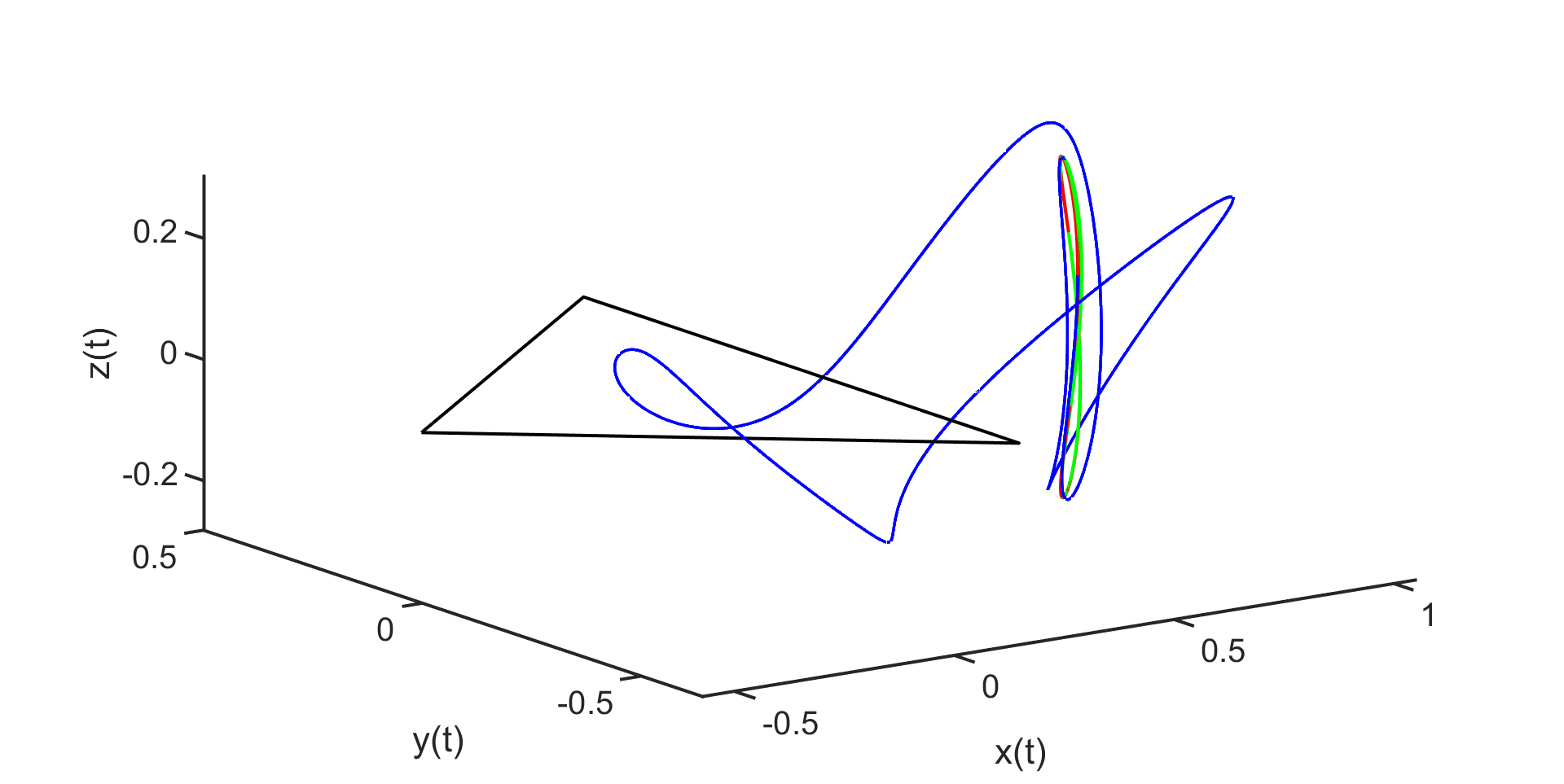}}} \\
\caption{ Homoclinic connections with trivial winding about all three primary bodies, 
for a spatial periodic orbit in the vertical Lyapunov family 
at $\mathcal{L}_5$ in the triple Copenhagen problem. 
Note that the connections pass through the ``inner system'', i.e. through the 
equilateral triangle formed by the primaries.
Error bounds are as given in Theorem \ref{thm:ValidationL5}, and 
projections displayed and energies are as described in the caption of 
Figure \ref{fig:homoclinicL5A-B}       
}\label{fig:homoclinicL5C-D}
\end{figure}

\begin{theorem}[Transverse spatial homoclinics for a Lyapunov orbit at $\mathcal{L}_5$
in the Triple Copenhagen problem] \label{thm:ValidationL5}
There exists a spatial periodic orbit $\gamma$ in the
Triple Copenhagen problem with 2D stable and 
2D unstable attached local invariant manifolds.  The Jacobi constant
for $\gamma$ is approximately $2.84$.  Moreover, there exist
six distinct transverse homoclinic connections for $\gamma$ in the 
same energy level.  
The connections are illustrated in Figures \ref{fig:homoclinicL5A-B},
\ref{fig:homoclinicL5C-D} and \ref{fig:homoclinicL5E-F}.
\end{theorem}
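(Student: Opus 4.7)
The plan is to execute, once per claimed connection and reusing manifold data across all six, the computer-assisted pipeline assembled in Sections \ref{sec:lowOrderTerms}--\ref{Sec:Chebyshev} and \ref{sec:TailArgument}. Concretely, the target is a numerically observed periodic orbit in the vertical Lyapunov family at $\mathcal{L}_5$ of the Triple Copenhagen problem (masses $m_1 = m_2 = m_3 = 1/3$), chosen so that its Jacobi constant is $\approx 2.84$ and so that the numerical continuation of the homoclinics carried out in \cite{MR4128684} produces the six candidate orbits shown in Figures \ref{fig:homoclinicL5A-B}--\ref{fig:homoclinicL5E-F}. The six connecting orbits are proven independently on top of a single validated pair of local invariant manifolds.

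First I would run stage one, the validation of $\gamma$ and its linear data. Fix the frequency $\omega$ corresponding to the target Jacobi constant, compute via Newton a finite Fourier approximation $\bar a_0$ of the periodic orbit in the polynomial embedding $\dot u = F(u)$, and apply Theorem \ref{thm:ContractionMapping} to the operator $F_0$ defined in Section \ref{sec:PO}, with the phase condition $\eta$ and the four-dimensional unfolding parameter $y$ that rigorously forces the automatic-differentiation variables to coincide with $1/r_j$. The successful contraction yields an enclosure radius $r_0$ and, a posteriori, $y = 0$. A second application of Theorem \ref{thm:ContractionMapping} to the operator $F_\alpha$ of the eigenfunction problem for $|\alpha| = 1$ with the phase condition \eqref{eq:PhaseBundle} then validates one member of the complex-conjugate stable pair $(\lambda_1, A_{1,0}(\theta))$; the other stable pair, and the two unstable pairs guaranteed by Remark \ref{rem:HamFloquet}, follow either by symmetry or by the same procedure. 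This establishes the existence of $\gamma$ with the claimed two-dimensional stable and unstable eigenspaces, and hence three-dimensional local invariant manifolds.

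Stage two builds the parameterizations $P, Q$ of $W^s_{\mathrm{loc}}(\gamma)$ and $W^u_{\mathrm{loc}}(\gamma)$. Solve the homological equations \eqref{eq:homEq_Amn_isolated} numerically in parallel for all $2 \le |\alpha| \le N_t$; since $\lambda_1, \lambda_2$ are nonreal complex conjugates no inner resonance occurs and each coefficient is uniquely determined. Apply Theorem \ref{thm:ContractionMapping} to the operator \eqref{eq:Fcal_alpha} at each order to obtain coefficient-wise radii $r_\alpha$, rescaling the eigenfunctions per Remark \ref{remark:rescale} if error spreading threatens to break contraction before order $N_t$ is reached. For the tail $|\alpha| > N_t$, invoke Theorem \ref{Thm:InverseG} to recast the infinite block of homological equations as a fixed-point problem $\mathcal{T}(x) = x$ on $X_2^\infty$, compute the bounds $Y$ and $Z_1$ developed in Section \ref{sec:TailArgument}, apply Theorem \ref{thm:ContractionMappingFixedPt}, and assemble the final $C^0$ error bar $r_{\bar P}$ via \eqref{Error:Full}. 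Repeat for $\bar Q$, producing a single validated pair $(\bar P, r_{\bar P}, \bar Q, r_{\bar Q})$ to be shared across all six connection proofs.

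Stage three proves each of the six connections. For every candidate, form the Chebyshev BVP \eqref{BVP:Chebyshev} with the dissipative unfolding $G_\beta$, decompose the time interval $[-1,1]$ into sufficiently many subdomains of length $2L_j$ to keep Chebyshev coefficients decaying exponentially, and assemble the operator $\mathcal{F}$ of Section \ref{Sec:Chebyshev} with boundary values taken in $\mathrm{image}(\bar Q)$ and $\mathrm{image}(\bar P)$; the extra manifold error $r_{\bar P} + r_{\bar Q}$ enters the $Y$ bound of Theorem \ref{thm:ContractionMapping} together with bounds on the derivatives of $P, Q$ (Remark \ref{rem:derivatives}, Appendix \ref{sec:CauchyBound}). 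A successful Newton--Kantorovich contraction delivers a genuine orbit segment of $F$ with $\beta = 0$; projecting through $R$ gives a true homoclinic of the CRFBP lying in the five-dimensional energy manifold $\mathcal{K}$ of $\gamma$, and the non-degeneracy of the fixed point of the Newton-like operator delivers transversality of $W^u(\gamma)$ and $W^s(\gamma)$ relative to $\mathcal{K}$, as in \cite{collisionPaper,MR3906230}. The six candidates are pairwise distinct because their Chebyshev enclosures are disjoint in phase space.

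The hard part will be budget management across the two nested Newton--Kantorovich arguments. The vertical Lyapunov orbit at $\mathcal{L}_5$ is not small, so its local stable/unstable manifolds have modest Taylor radius, and each $r_{\bar P}, r_{\bar Q}$ must be driven well below the margin left by $1 - Z_0 - Z_1$ in the BVP-level radii polynomial; simultaneously, several of the six connections make close passes to primaries (visible in Figures \ref{fig:homoclinicL5A-B}, \ref{fig:homoclinicL5C-D}) which inflates the Chebyshev $Z$ bounds and forces more subdomains. The available levers are eigenfunction rescaling, increasing $(N_t, K)$ in Fourier--Taylor space, Chebyshev domain subdivision, and aggressive use of Proposition \ref{EstimatesBound} with the dual-space estimates to keep interval wrapping small; closing all bounds simultaneously is the delicate computational point on which the whole theorem turns.
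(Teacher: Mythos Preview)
Your proposal is correct and follows essentially the same computer-assisted pipeline as the paper: validate $\gamma$ and its Floquet data via Theorem~\ref{thm:ContractionMapping}, build validated Fourier--Taylor parameterizations $P,Q$ through the homological equations plus the tail argument of Section~\ref{sec:TailArgument}, and then run the Chebyshev BVP of Section~\ref{Sec:Chebyshev} with the unfolding $G_\beta$ once per connection, reusing the single pair $(\bar P,\bar Q)$ throughout. The paper's own account simply records the specific parameter choices ($K=30$, $N_t=6$, $k_0=10$, $\xi_0=10^{-5}$, $M\in\{8,9\}$ Chebyshev subdomains) and the resulting $C^0$ error radii for the manifolds and each of the six connections, which is exactly the output your plan would produce.
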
 

The computation are performed with $K=30$ Fourier modes, and $N_t=6$ 
Taylor modes for the parameterized manifolds. The scale of the eigenfunctions are
controlled by taking phase conditions as in Equation \eqref{eq:PhaseBundle} 
with $k_0=10$ and $\xi_0 = 10^{-5}$. The parameterized local stable manifold 
$P$, and local unstable manifold $Q$ satisfy
\begin{align*}
\left\| P - \bar P \right\|_{\infty} &\leq 1.076 \cdot 10^{-12}, ~\mbox{and} \\
\left\| Q - \bar Q \right\|_{\infty} &\leq 1.597 \cdot 10^{-12}.
\end{align*}
These parameterizations are used to set up BVPs \eqref{BVP:Chebyshev}
for all six connecting orbits.  We numerically compute approximate 
solutions $\bar{v}_a,\bar{v}_b,\bar{v}_c,\bar{v}_d,\bar{v}_e,\bar{v}_f$ displayed in 
 Figures \ref{fig:homoclinicL5A-B},
\ref{fig:homoclinicL5C-D} and \ref{fig:homoclinicL5E-F}. 
Applying Theorem \ref{thm:ContractionMapping}, we obtain 
the existence of true solutions having 
\begin{align*}
\left\| v_a - \bar{v}_a \right\|_{\infty} &\leq 5.155 \cdot 10^{-9}, \\
\left\| v_b - \bar{v}_b \right\|_{\infty} &\leq 5.108 \cdot 10^{-9}, \\
\left\| v_c - \bar{v}_c \right\|_{\infty} &\leq 3.074 \cdot 10^{-9}, \\
\left\| v_d - \bar{v}_d \right\|_{\infty} &\leq 2.636 \cdot 10^{-9}, \\
\left\| v_e - \bar{v}_e \right\|_{\infty} &\leq 1.626 \cdot 10^{-8}, ~\mbox{and} \\
\left\| v_f - \bar{v}_f \right\|_{\infty} &\leq 1.936 \cdot 10^{-8},
\end{align*}
in the $C^0$ norm.  
The proofs for $v_a$ and $v_b$ are formulated with
$M=9$ Chebyshev domains, each with $50$ 
non-zero coefficients. The remaining proofs are formulated
with $M=8$ domains and $45$ Chebyshev coefficients.

\begin{theorem}[Error bound of solutions at $\mathcal{L}_0$]\label{thm:ValidationL0}
There exists a spatial periodic orbit $\gamma$ in the
CRFBP with  $m_1=0.4$, $m_2=0.33$, and $m_3= 0.27$.  The cycle $\gamma$
has  2D stable and 2D unstable attached local invariant manifolds.  
The Jacobi constant
for $\gamma$ is approximately $3.26$.  Moreover, there exist
six distinct transverse homoclinic connections for $\gamma$.   
The connections are illustrated in Figure \ref{fig:homoclinicL0}.
\end{theorem}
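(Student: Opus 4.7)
The plan is to follow exactly the same strategy used in the proof of Theorem \ref{thm:ValidationL5}, adapting every step to the new mass parameters $m_1 = 0.4$, $m_2 = 0.33$, $m_3 = 0.27$ and to the libration point $\mathcal{L}_0$. The overall scheme proceeds in four stages: validation of the periodic orbit $\gamma$, validation of its parameterized stable/unstable manifolds (with tail bounds), formulation and validation of the six two-point BVPs, and extraction of transversality from the non-degeneracy of the Newton-like operator.

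First I would numerically compute the locations of the libration points for the given masses via Newton's method applied to $\nabla \Omega = 0$, identify $\mathcal{L}_0$, and, after fixing a frequency $\omega$ in its vertical Lyapunov family, continue numerically to obtain an approximate spatial periodic orbit $\bar a_0$ with Jacobi constant near $3.26$. I then apply Theorem \ref{thm:ContractionMapping} to the zero finding operator $F_0$ of Section \ref{sec:PO} (including the Poincar\'{e} phase condition $\eta$ and the four unfolding parameters) to validate $\gamma$, yielding $r_0$ such that $\|a_0 - \bar a_0\|_{(\ellnu)^9} \leq r_0$. Next, working order by order, I validate the complex conjugate pair of Floquet multipliers $\lambda_{1,2}$ and eigenfunctions $A_{1,0}, A_{0,1}$ using the modified zero finding problem of \eqref{eq:PhaseBundle}, then validate each Taylor coefficient $A_\alpha$ with $2 \leq |\alpha| \leq N_t$ using \eqref{eq:Fcal_alpha}. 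Once the finite jet is validated, the tail bound of Section \ref{sec:TailArgument} applied to the fixed point operator $\mathcal{T}$ yields the Fourier-Taylor error $r_{\bar P}$ (and analogously $r_{\bar Q}$ for the unstable side) as in \eqref{Error:Full}; this must be carried out for both the stable and unstable parameterizations $P, Q$.

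With validated $P$ and $Q$ in hand, I would numerically produce approximate connecting orbits $\bar v_a, \ldots, \bar v_f$ using the initial data and shooting/continuation strategies developed in \cite{MR4128684}, restricted to the energy level of $\gamma$. For each candidate, I set up the Chebyshev BVP \eqref{BVP:Chebyshev} with the unfolding parameter $\beta$, using Chebyshev domain decomposition over $M$ subintervals if the integration time $2L$ is large. I then apply Theorem \ref{thm:ContractionMapping} to the zero finding operator $\mathcal{F}$ of Section \ref{Sec:Chebyshev}, computing the $Y$, $Z_0$, $Z_1$, $Z_2$ bounds (using Remark \ref{rem:derivatives} to handle derivatives of $P$ and $Q$, including the tail remainders via the Cauchy estimates of the appendix). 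A successful radii polynomial produces a true solution $v_\ast$ near each $\bar v_\ast$ and, because $DF(\tilde x)$ is invertible, transversality in the energy manifold in the sense of Section \ref{Sec:Chebyshev} (using that $\beta = 0$ at the true solution).

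The principal obstacle, as in Theorem \ref{thm:ValidationL5}, will be producing numerically accurate enough approximations of the six connecting orbits so that the radii polynomial for each BVP actually has a negative value, while keeping the parameterization radii $r_{\bar P}, r_{\bar Q}$ small enough not to swamp the $Y$ bound in Chebyshev space. In practice this is a balancing act: one may need to reduce the eigenfunction scale $\xi_0$, increase $K$ and $N_t$, and tune the domain decomposition parameters $M$ and per-domain Chebyshev order to push the bounds below threshold. A secondary difficulty is verifying that the nonresonance estimate \eqref{eq:FiniteCriteria} holds for all $N_t < |\alpha| \leq M$ where the asymptotic Neumann bound \eqref{eq:BoundNeumann1} fails; this requires a finite but potentially large computer verification that must be carried out individually for each offending $\alpha$. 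Once these bounds clear, the six validated connecting orbits together with the validated $\gamma$ establish the theorem, with the specific numerical error bounds reported analogously to those listed after Theorem \ref{thm:ValidationL5}.
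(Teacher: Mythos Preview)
Your proposal is correct and follows essentially the same approach as the paper: both proceed by applying the full machinery of Sections~\ref{sec:lowOrderTerms}--\ref{sec:TailArgument} (validated periodic orbit, validated Floquet data and homological equations, tail bound via $\mathcal{T}$, then the Chebyshev BVP with unfolding parameter) to the $\mathcal{L}_0$ family at $m_1=0.4$, $m_2=0.33$, $m_3=0.27$. The paper's text following the theorem simply records the specific computational parameters used ($K=30$, $N_t=6$, $k_0=10$, $\xi_0=10^{-4}$, and the domain-decomposition choices $M=4$ or $M=8$) together with the resulting validated error radii, which is exactly the endpoint your plan would arrive at.
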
 

These computation where performed with 
$K=30$ Fourier and $N_t=6$ Taylor modes for the parameterization of each manifold,
and scalings set (as in Equation \eqref{eq:PhaseBundle}) 
with $k_0=10$ and $\xi_0 = 10^{-4}$. The parameterized stable manifold $P$, 
and unstable manifold $Q$ satisfy
\begin{align*}
\left\| P - \bar P \right\|_{\infty} &\leq 2.154 \cdot 10^{-12}, ~\mbox{and} \\
\left\| Q - \bar Q \right\|_{\infty} &\leq 1.679 \cdot 10^{-12}.
\end{align*}
These parameterization are used to set up BVPs \eqref{BVP:Chebyshev}
for all six connections.  We use Netwon's method to compute
approximated solutions $\bar{v}_a,\bar{v}_b,\bar{v}_c,\bar{v}_d,\bar{v}_e,\bar{v}_f$ 
displayed in Figure \ref{fig:homoclinicL0}. Again, using 
Theorem \ref{thm:ContractionMapping} we obtain that
\begin{align*}
\left\| v_a - \bar{v}_a \right\|_{\infty} &\leq 1.999 \cdot 10^{-9}, \\
\left\| v_b - \bar{v}_b \right\|_{\infty} &\leq 2.504 \cdot 10^{-9}, \\
\left\| v_c - \bar{v}_c \right\|_{\infty} &\leq 1.885 \cdot 10^{-9}, \\
\left\| v_d - \bar{v}_d \right\|_{\infty} &\leq 4.763 \cdot 10^{-9}, \\
\left\| v_e - \bar{v}_e \right\|_{\infty} &\leq 3.027 \cdot 10^{-9}, ~\mbox{and} \\
\left\| v_f - \bar{v}_f \right\|_{\infty} &\leq 3.591 \cdot 10^{-9}.
\end{align*}
The short connections (displayed on the left of Figure \ref{fig:homoclinicL0}) are 
computed with $M=4$ Chebyshev domains, each with $40$ non-zero coefficients. 
The longer connections (displayed on the right of Figure \ref{fig:homoclinicL0}) are
computed with $M=8$ Chebyshev domains, each with $50$ non-zero coefficients.

\begin{figure}[!t]
	\subfigure{{\includegraphics[width=.48\textwidth]{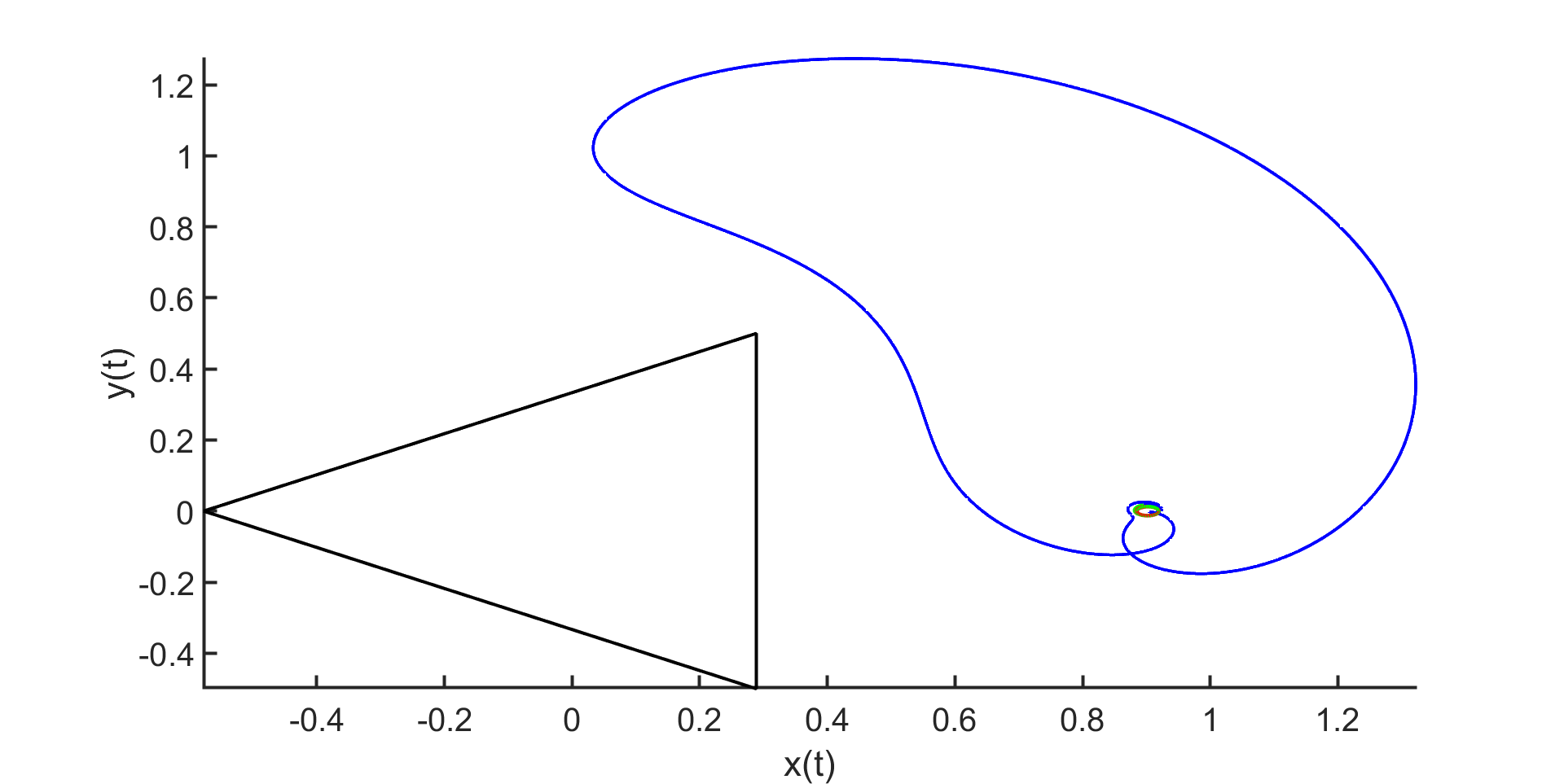}}}
 	\subfigure{{\includegraphics[width=.48\textwidth]{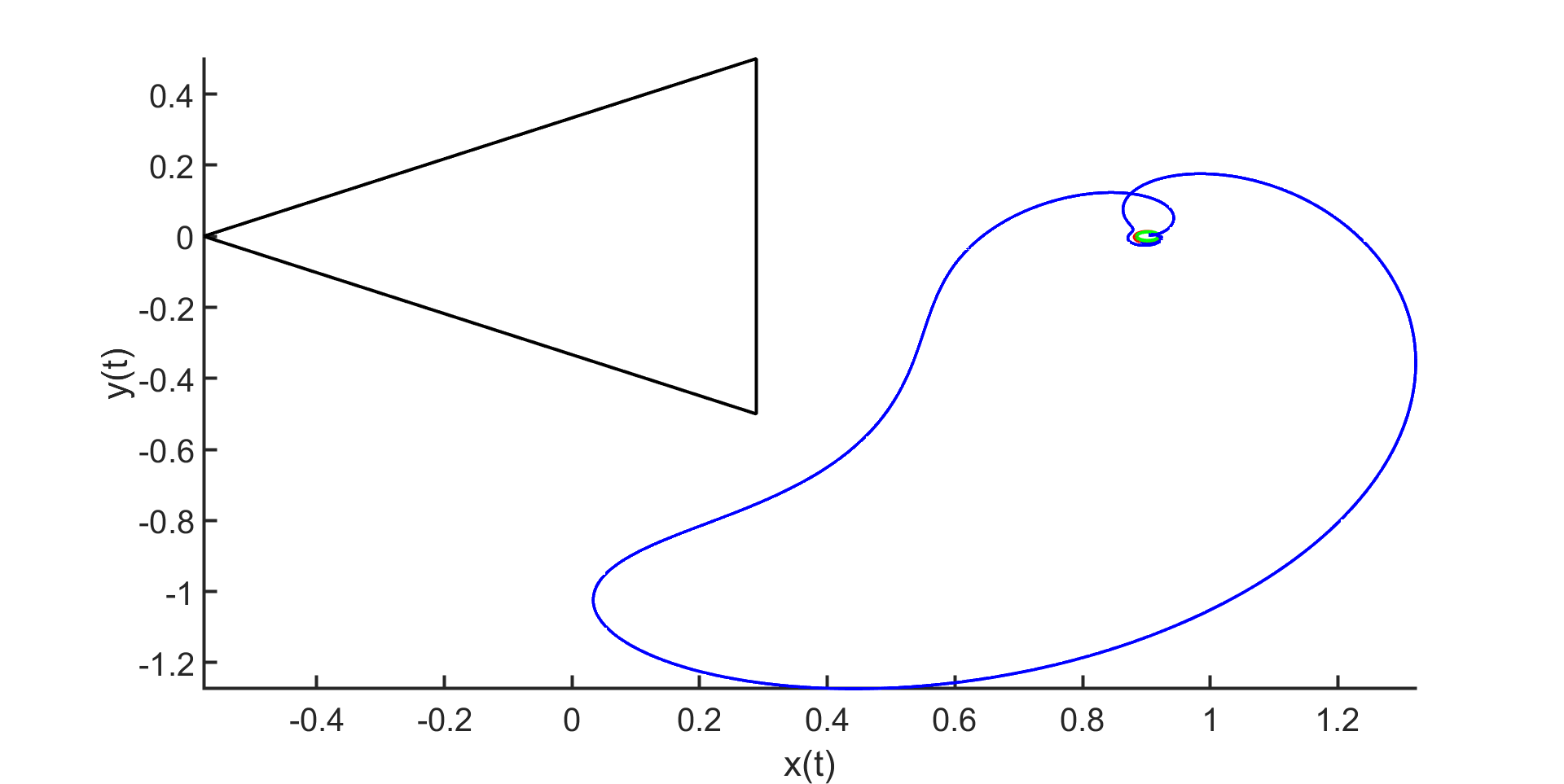}}}\\    
    \subfigure{{\includegraphics[width=.48\textwidth]{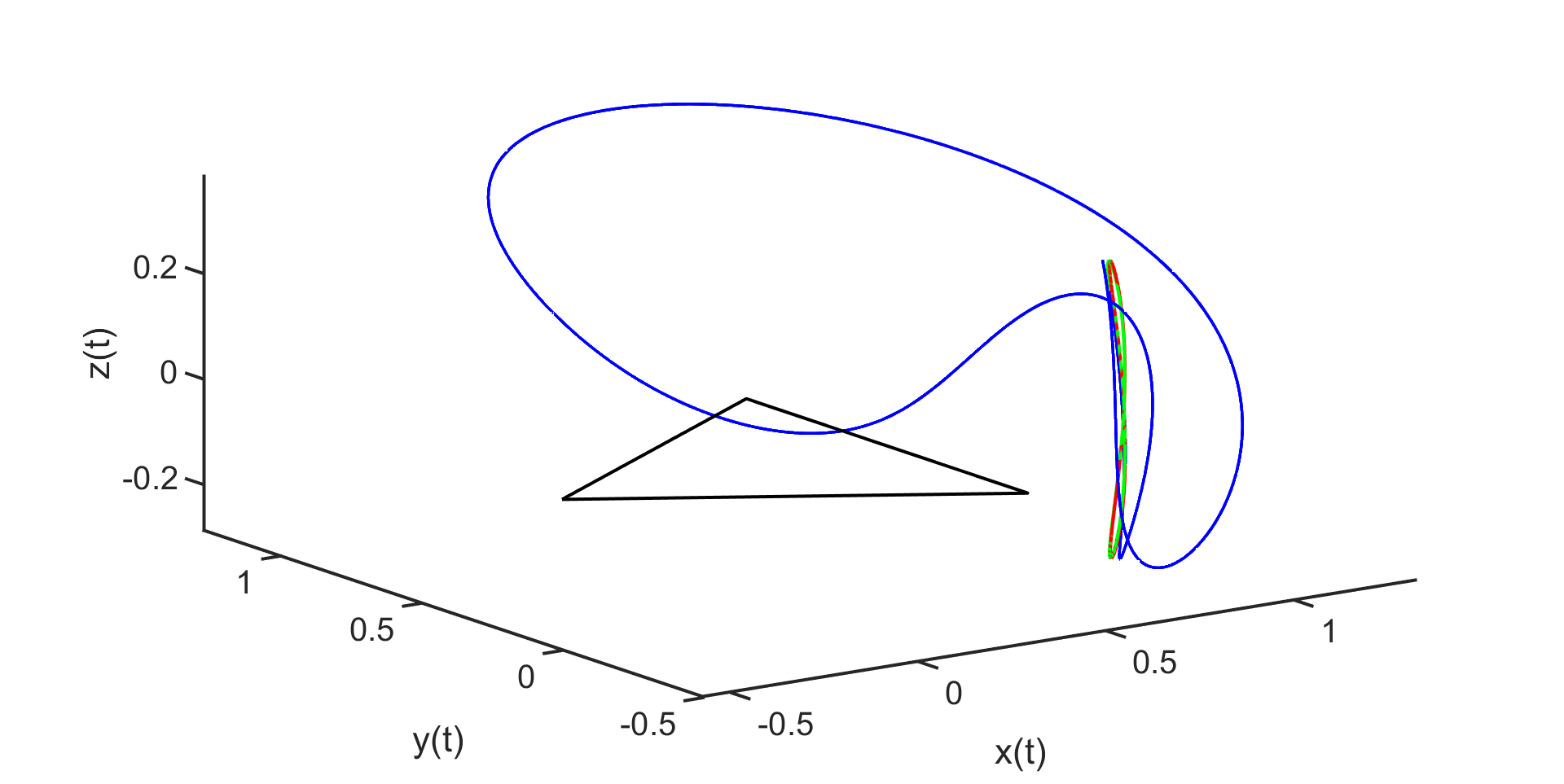}}}
    \subfigure{{\includegraphics[width=.48\textwidth]{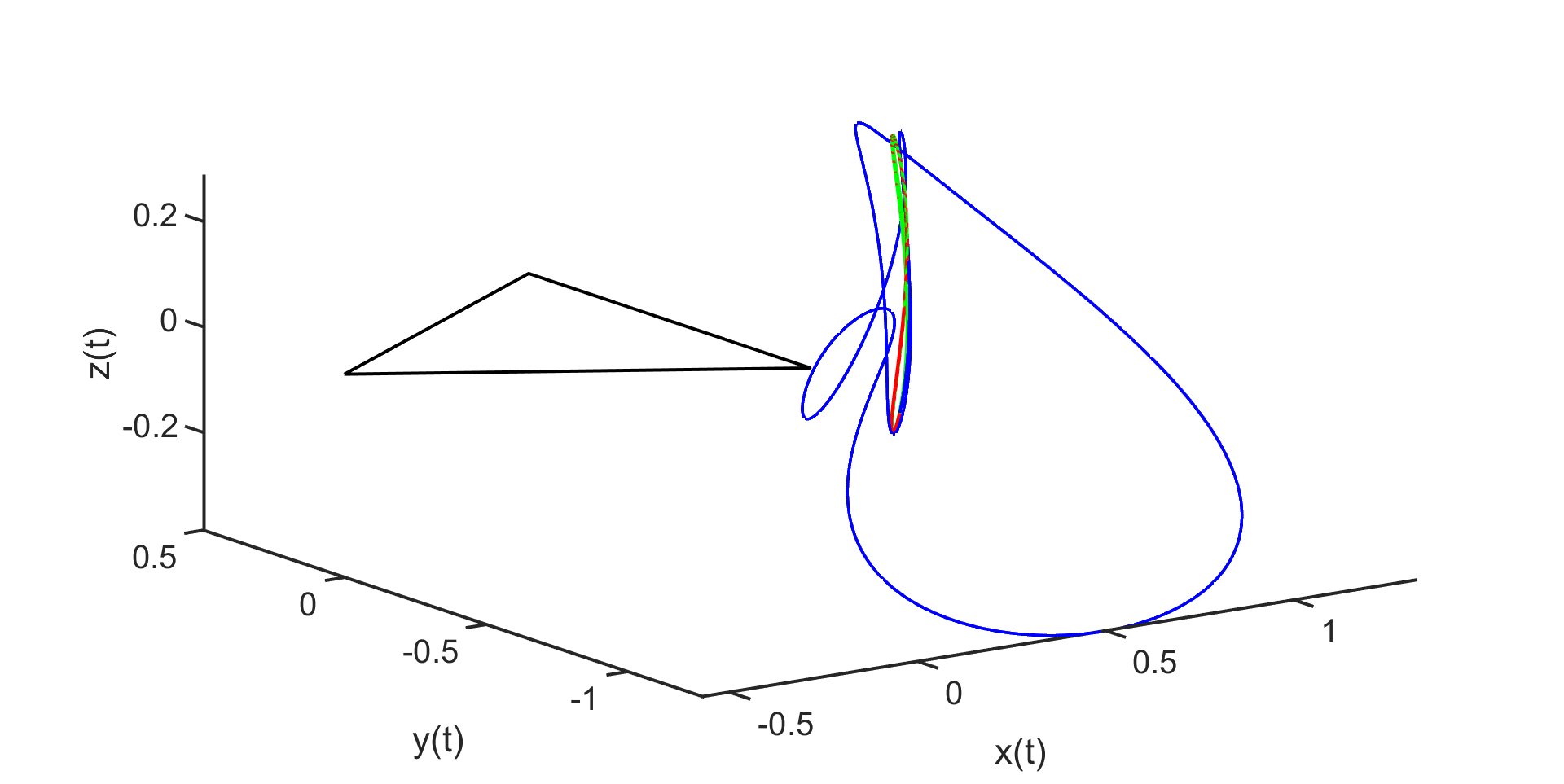}}}
\caption{ Homoclinic connections with trivial winding about all three primary bodies, 
for a spatial periodic orbit in the vertical Lyapunov family 
at $\mathcal{L}_5$ in the triple Copenhagen problem. 
Note that the connections are entirely in the ``outer system'', i.e. 
the orbits do not intersect the equilateral triangle formed by the primaries.
Error bounds are as given in Theorem \ref{thm:ValidationL5}, and 
projections displayed and energies are as described in the caption of 
Figure \ref{fig:homoclinicL5A-B}
}\label{fig:homoclinicL5E-F}
\end{figure}

\begin{figure}[!t]
	\subfigure{{\includegraphics[width=.48\textwidth]{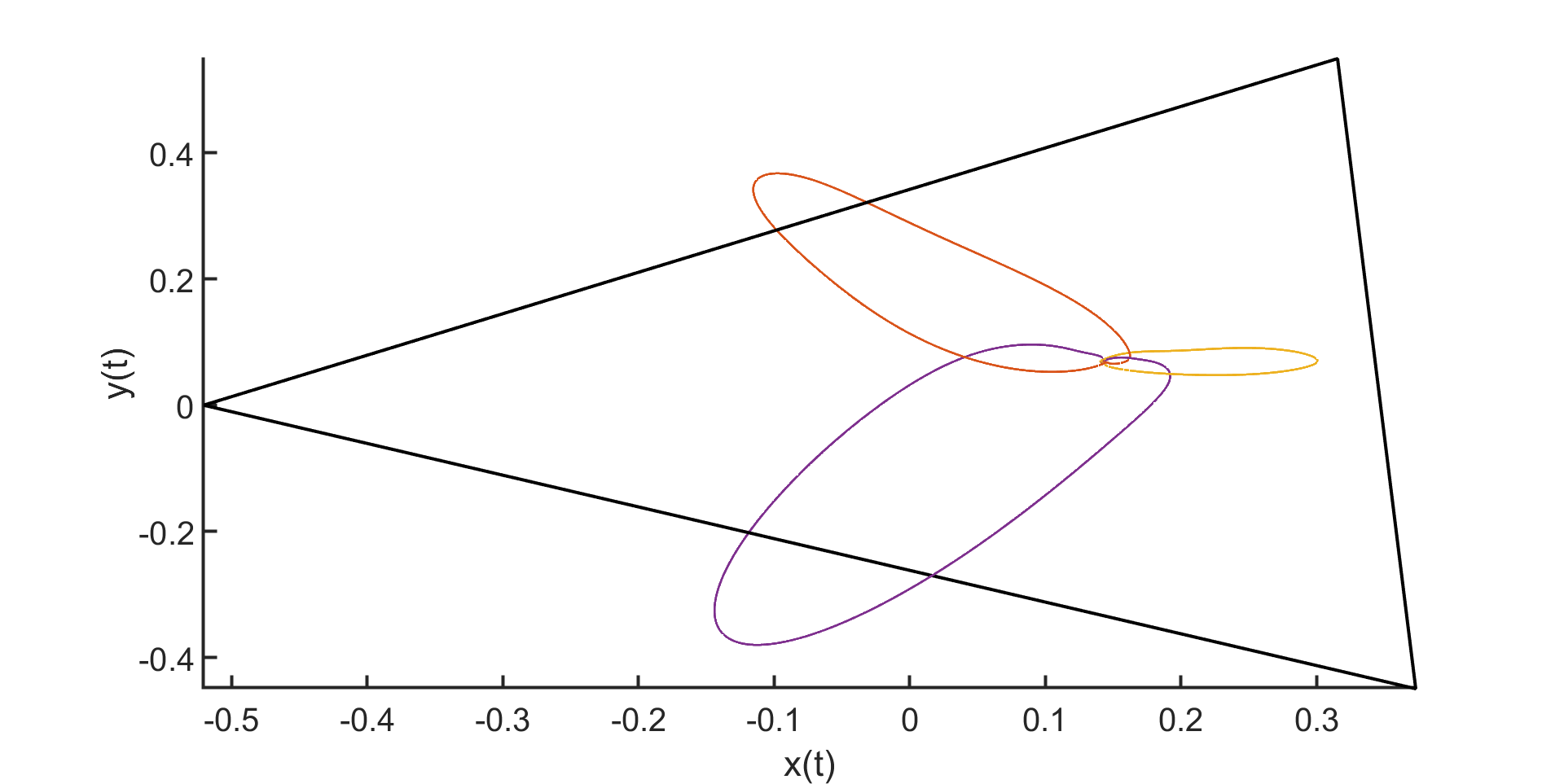}}} 
 	\subfigure{{\includegraphics[width=.48\textwidth]{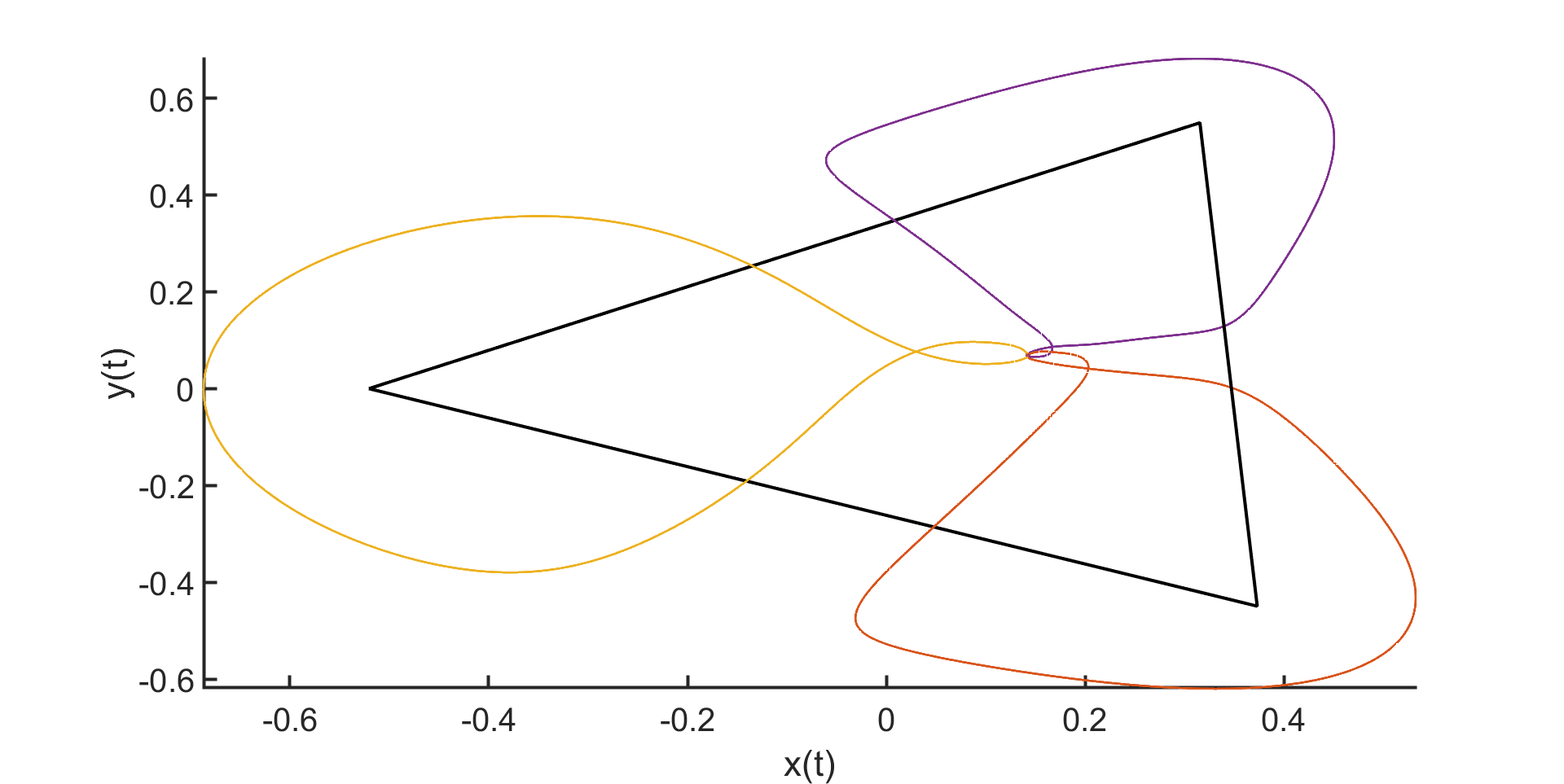}}}\\    
    \subfigure{{\includegraphics[width=.48\textwidth]{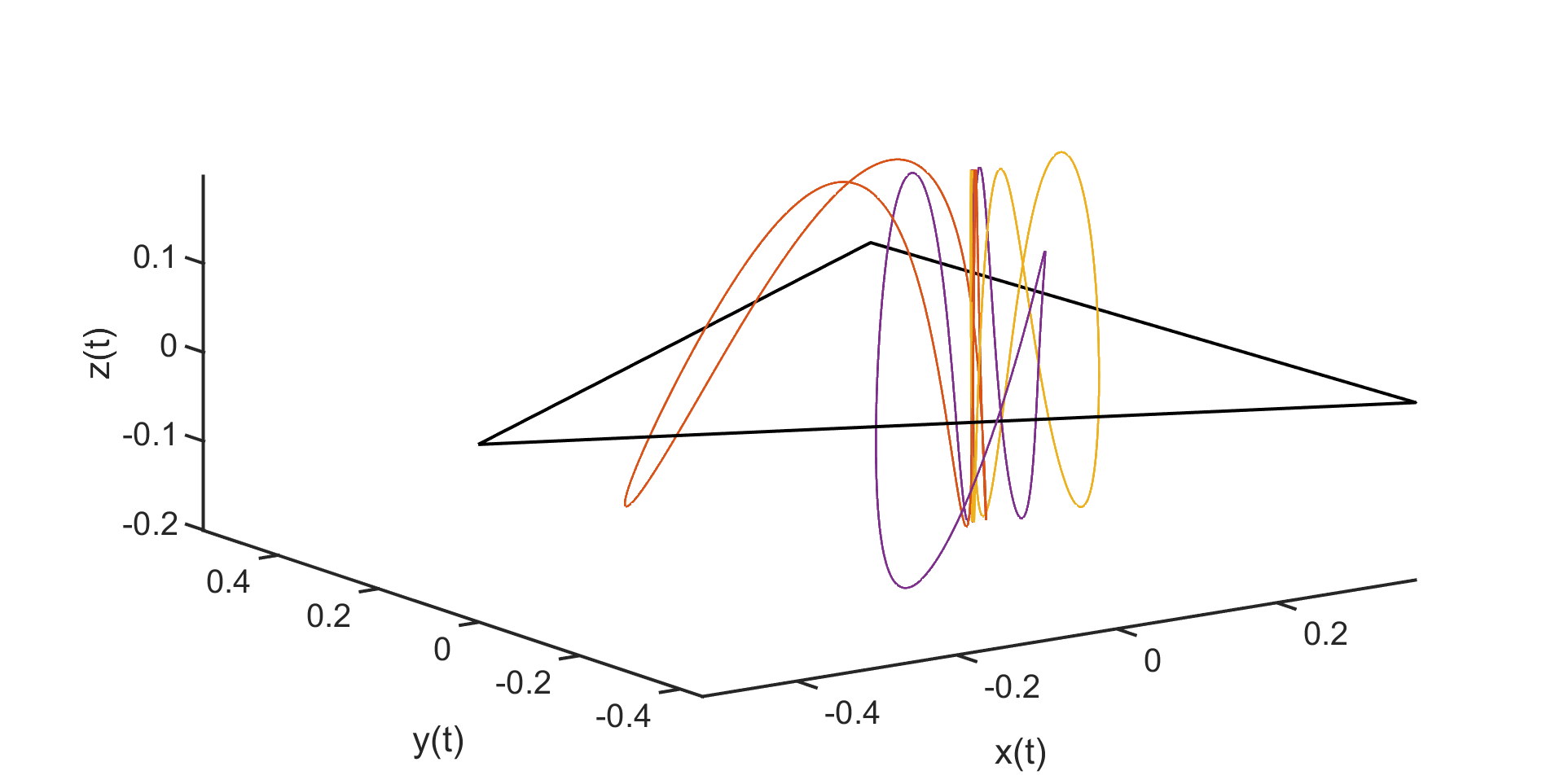}}} 
    \subfigure{{\includegraphics[width=.48\textwidth]{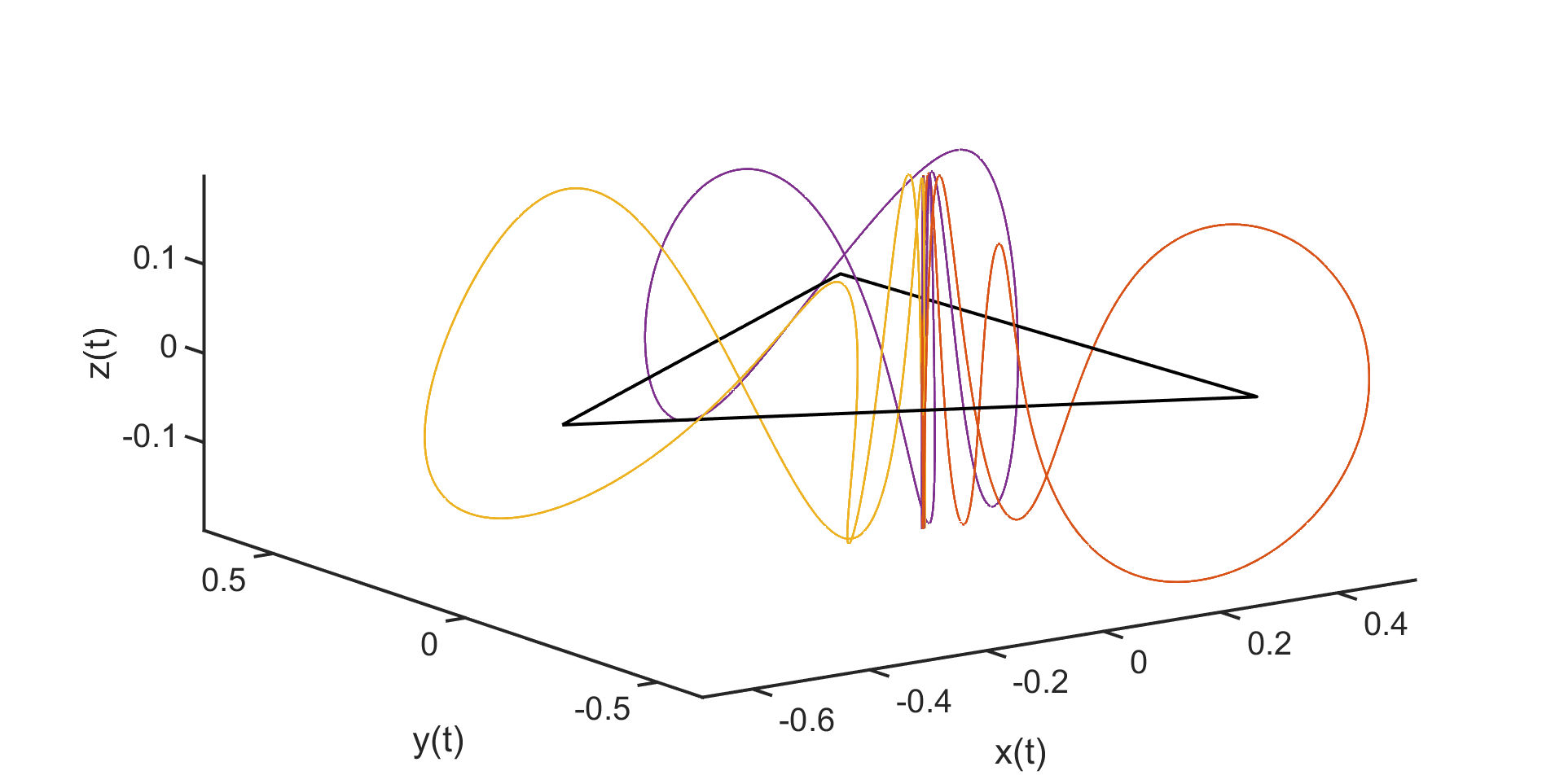}}}
\caption{ Homoclinic connections to a spatial periodic orbit from the vertical Lyapunov family 
at $\mathcal{L}_0$ with $m_1=0.4$, $m_2=0.33$, and $m_3= 0.27$. The first row displays the connection seen from above, while the bottom row exhibits that the connections are indeed out of plane. Each of the connections has been validated with the help of Theorem \ref{thm:ContractionMapping}, so that a true solution lays in a neighborhood of the approximation displayed. The size of said neighborhood, thus a bound on the $C^0$ norm, is specified for each case in Theorem \ref{thm:ValidationL0}. The computation were executed with a fixed frequency for the periodic orbit, hence the exact energy isn't provided by the validation algorithm. We note however that the energy is of approximately $3.26$.      
}\label{fig:homoclinicL0}
\end{figure}

%
%
%
%
%

{\tiny

\appendix

\section{Cauchy Bounds for analytic functions: Fourier and Taylor cases}\label{sec:CauchyBound}
%
The following lemmas facilitate control of all necessary partial derivatives 
the the Fourier-Taylor series appearing in our computer assisted proofs.
The quite standard proofs are included for the sake of completeness.  

\begin{lemma}[Bounds for Analytic Functions given by Fourier Series] 
\label{lem:cauchyBounds_Fourier}
Suppose that $\{a_n \}_{n \in \mathbb{Z}}$ is a two sided sequence of 
complex numbers, that $\omega > 0$, and that $\| a\| _{1,\nu} < \infty$, 
for some $\nu>1$. Let 
\[
f(z) := \sum_{n \in \mathbb{Z}} a_n e^{i \omega k z}.
\]
For $r > 0$ let 
\[
A_r := \{z \in \mathbb{C} \, | \, |\mbox{{\em imag}}(z)| < r \},
\]
denote the open complex strip of width $r$.
\begin{itemize}
\item[(1)] \textbf{$\ell_\nu^1$ bounds imply $C^0$ bounds:} 
The function $f$ is analytic on the strip $A_r$ with $r = \ln(\nu)/\omega$,
continuous on the closure of $A_r$, and satisfies
\[
\sup_{z \in A_r} |f(z)| \leq \| a\|_{1,\nu}.
\]
Moreover $f$ is $T$-periodic with $T = 2 \pi/\omega$. 
\item[(II)] \textbf{Cauchy Bounds:} Let $0 < \sigma < r$ and 
\[
\tilde \nu = e^{\omega (r - \sigma)}.
\]
Define the sequence $b = \{b_n \}_{n \in \mathbb{Z}}$ by 
\[
b_n = i \omega n a_n.
\]
Then 
\[
\| b \|_{\tilde \nu}^1 \leq \frac{1}{e \sigma} \|a\|_{1,\nu},
\]
and
\[
\sup_{z \in A_{r - \sigma}} |f'(z)| \leq \frac{1}{e \sigma} \| a\|_{1,\nu}.
\]
\end{itemize}
\end{lemma}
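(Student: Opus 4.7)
The plan is to prove both parts by direct term-by-term estimates, using the fact that on a strip of imaginary width $r$ the complex exponentials $e^{i\omega k z}$ are bounded pointwise by $e^{\omega|k|r}$, which is precisely the weight $\nu^{|k|}$ when $r = \ln(\nu)/\omega$.

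For part (I), I would first write $z = x + iy$ and observe that $|e^{i\omega k z}| = e^{-\omega k y} \leq e^{\omega |k| |y|} \leq e^{\omega |k| r} = \nu^{|k|}$ for every $z \in \overline{A_r}$. The Weierstrass M-test then yields that the Fourier series converges absolutely and uniformly on $\overline{A_r}$, which simultaneously gives the $C^0$ bound $\sup_{z \in \overline{A_r}} |f(z)| \leq \sum_k |a_k| \nu^{|k|} = \|a\|_{1,\nu}$, continuity on $\overline{A_r}$, and analyticity on $A_r$ (as a uniform limit of entire functions on compact subsets). Periodicity with period $T = 2\pi/\omega$ is immediate from the periodicity of each term.

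For part (II), the core of the argument is the elementary calculus fact that $h(t) := t e^{-st}$ attains its maximum on $[0,\infty)$ at $t = 1/s$, with value $1/(es)$. Applying this with $s = \omega \sigma$ and $t = |n|$ gives
\[
|b_n| \tilde \nu^{|n|} = \omega |n| |a_n| e^{\omega(r-\sigma)|n|} = \left(\omega |n| e^{-\omega \sigma |n|}\right) |a_n| \nu^{|n|} \leq \frac{1}{e\sigma}\, |a_n| \nu^{|n|}.
\]
Summing over $n \in \mathbb{Z}$ yields $\|b\|_{1,\tilde\nu} \leq \frac{1}{e\sigma}\|a\|_{1,\nu}$. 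For the sup-norm bound on $f'$, I would note that the termwise-differentiated series is exactly the Fourier series with coefficients $\{b_n\}$, and that $\ln(\tilde \nu)/\omega = r - \sigma$, so part (I) applied to $\{b_n\}$ and $\tilde\nu$ immediately delivers
\[
\sup_{z \in A_{r-\sigma}} |f'(z)| \leq \|b\|_{1,\tilde\nu} \leq \frac{1}{e\sigma}\|a\|_{1,\nu}.
\]

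There is no real obstacle here; the only point requiring a line of justification is that termwise differentiation is legitimate, which follows from the uniform convergence of the differentiated series on compact subsets of $A_r$ (guaranteed by the same M-test argument used in part (I), applied to $\{b_n\}$ on any strip $A_{r-\sigma}$ with $\sigma > 0$). This step can be absorbed into the final display and does not need to be isolated as a separate lemma.
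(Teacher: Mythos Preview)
Your proposal is correct and follows essentially the same route as the paper: both parts use the termwise bound $|e^{i\omega n z}| \leq \nu^{|n|}$ on $\overline{A_r}$ together with the Weierstrass M-test for (I), and the elementary optimization $\max_{t\geq 0} t e^{-\omega\sigma t} = 1/(e\omega\sigma)$ to control $\|b\|_{1,\tilde\nu}$ in (II), after which the derivative bound is read off from (I) applied with $\tilde\nu$. Your remark on legitimizing termwise differentiation is a welcome clarification but does not depart from the paper's argument.
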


\begin{proof}
For $\nu > 1$, let $r = \ln(\nu)/\omega$ (i.e. $\nu  = e^{\omega r}$) and consider $z \in A_r$.
We have that 
\begin{align*}
|f(z)| &\leq \sum_{n \in \mathbb{Z}} |a_n| \left| \left(e^{i \omega z}\right)^{n} \right| \\
&\leq \sum_{n \in \mathbb{Z}} |a_n| \left(e^{\omega \left|\mbox{imag}(z) \right|}\right)^{|n|} \\
&\leq \sum_{n \in \mathbb{Z}} |a_n| \nu^{|n|} \\
&= \| a \|_{1,\nu}.
\end{align*}
It follows that $f$ is analytic as the Fourier series converges absolutely and uniformly in $A_r$.
Continuity at the boundary of the strip also follows from the absolute summability of the series. 
From the fact that $f$ is analytic in $A_r$ it follows that the derivative $f'(z)$ exists 
for any $z$ in the interior of $A_r$, and that 
\[
f'(z) = \sum_{n \in \mathbb{Z}} i \omega n a_n e^{i \omega n z} = \sum_{n \in \mathbb{Z}} b_n e^{i \omega n z}.
\]
However the fact that $f$ is uniformly bounded on $A_r$ does not imply uniform 
bounds on $f'$.  Indeed it may be that $f'$ has singularities at the boundary.  

In order to obtain uniform bounds on the derivative we give up a portion of the width of the 
domain. More precisely we 
consider the supremum of $f'$ on the strip $A_{r - \sigma}$ with $0 < \sigma < r$.
Letting $\nu = e^{\omega r}$, so that $\tilde \nu = e^{\omega(r - \sigma)}$, 
define the function $s \colon \mathbb{R}^+ \to \mathbb{R}$ by 
\[
s(x) := x \alpha^x,
\]
with $\alpha = \tilde{\nu} / \nu = e^{-\omega \sigma} < 1$.
Note that $s(x) \geq 0$ for all $x \geq 0$,  $s(0) = 0$, and that $s(x) \to 0$ as $x \to \infty$.
Moreover $s$ is bounded and attains its maximum at 
\[
\hat x = \frac{1}{\omega \sigma}, 
\]
as can be seen by computing the critical point of $s$.  
It follows that 
\[
s(x) \leq s(\hat x) = \frac{1}{e \omega \sigma}.
\]
From this we have 
\begin{eqnarray*}
\| b\|_{1,\tilde \nu} &=& \sum_{n \in \mathbb{Z}} |b_n| \tilde{\nu}^{|n|} \\
&=& \sum_{n \in \mathbb{Z}} \omega |n| |a_n| \tilde{\nu}^{|n|} \frac{\nu^{|n|}}{\nu^{|n|}} \\
&=& \sum_{n \in \mathbb{Z}} \omega |n| \left(\frac{\tilde \nu}{\nu}  \right)^{|n|} |a_n| \nu^{|n|}  \\
&=& \sum_{n \in \mathbb{Z}}  \omega s(|n|) |a_n| \nu^{|n|} \\
&\leq& \sum_{n \in \mathbb{Z}}  \omega \frac{1}{e \omega \sigma} |a_n| \nu^{|n|} \\ 
&=& \frac{1}{e \sigma} \| a\|_{1,\nu}.
\end{eqnarray*} 
It follows by $(I)$ that if $z \in A_{r - \sigma}$ then
\[
|f'(z)| \leq  \|b\|_{1,\tilde \nu} \leq \frac{1}{e \sigma} \| a\|_{1,\nu},
\]
as desired.
\end{proof}

\begin{lemma}[Bounds for Analytic Functions given by Taylor Series]
 \label{lem:cauchyBounds_Taylor}
Let $\nu > 0$ and suppose that $\{a_n \}_{n \in \mathbb{N}}$ is a one sided 
sequence of complex numbers 
with  
\[
\| a\| _{1,\nu} = \sum_{n =0}^\infty |a_n| \nu^n  < \infty.
\]
Define 
\[
f(z) := \sum_{n=0}^\infty a_n z^n,
\]
and let
\[
D_\nu := \{z \in \mathbb{C} \, | \, |z| < \nu \},
\]
denote the complex disk of radius $\nu > 0$ centered at the origin.
\begin{itemize}
\item[(1)] \textbf{$\ell_\nu^1$ bounds imply $C^0$ bounds:} 
The function $f$ is analytic on the disk $D_\nu$,
continuous on the closure of $D_\nu$, and satisfies
\[
\sup_{z \in D_\nu} |f(z)| \leq \| a\|_{1,\nu}.
\] 
\item[(II)] \textbf{Cauchy Bounds:} Let $0 < \sigma \leq 1$ and 
\[
\tilde \nu = \nu e^{- \sigma}.
\]
Then 
\[
\sup_{z \in D_{\tilde \nu}} |f'(z)| \leq \frac{1}{\nu \sigma} \|a\|_{1,\nu}.
\]
\end{itemize}
\end{lemma}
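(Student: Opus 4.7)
The plan is to mirror the structure of Lemma \ref{lem:cauchyBounds_Fourier}, replacing the complex strip by the complex disk and the two-sided exponential weight $\nu^{|n|}$ by the one-sided power weight $\nu^n$. Part (I) is an immediate consequence of the triangle inequality once one has an $\ell^1_\nu$ bound on $\{a_n\}$, while part (II) requires the standard Cauchy-type trick of surrendering a sliver of the domain in exchange for a uniform bound on the derivative.

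For (I), I would observe that for any $z \in \overline{D_\nu}$,
\begin{equation*}
|f(z)| \;\leq\; \sum_{n=0}^\infty |a_n|\,|z|^n \;\leq\; \sum_{n=0}^\infty |a_n|\,\nu^n \;=\; \|a\|_{1,\nu}.
\end{equation*}
Uniform convergence of the partial sums on $\overline{D_\nu}$ follows from the Weierstrass $M$-test with $M_n = |a_n|\nu^n$, delivering continuity on $\overline{D_\nu}$; analyticity on the open disk $D_\nu$ then follows from the classical fact that a locally uniform limit of polynomials is holomorphic.

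For (II), I would differentiate term-by-term inside $D_\nu$ to obtain $f'(z) = \sum_{n \geq 1} n a_n z^{n-1}$. For $z \in \overline{D_{\tilde\nu}}$ with $\tilde\nu = \nu e^{-\sigma}$, the triangle inequality gives
\begin{equation*}
|f'(z)| \;\leq\; \sum_{n=1}^\infty n |a_n|\,\tilde\nu^{\,n-1} \;=\; \frac{e^\sigma}{\nu}\sum_{n=1}^\infty \bigl(n e^{-\sigma n}\bigr)\, |a_n|\,\nu^n.
\end{equation*}
The function $g(x) = x e^{-\sigma x}$ on $[0,\infty)$ attains its maximum $1/(e\sigma)$ at $x = 1/\sigma$ (exactly as in the Fourier proof), and pulling this supremum out of the sum yields
\begin{equation*}
|f'(z)| \;\leq\; \frac{e^{\sigma-1}}{\nu\sigma}\,\|a\|_{1,\nu}.
\end{equation*}

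The main obstacle is that, although $f$ is continuous up to the boundary of $D_\nu$, its derivative need not be bounded there; this is exactly what the auxiliary radius $\tilde\nu < \nu$ is designed to circumvent, and the optimization of $x \mapsto x e^{-\sigma x}$ is the mechanism that converts an $\ell^1_\nu$ bound on $\{a_n\}$ into a uniform pointwise bound on $f'$. The only remaining subtlety is recognizing why the paper restricts to $0 < \sigma \leq 1$ rather than allowing arbitrary $\sigma > 0$: this hypothesis gives $e^{\sigma-1} \leq 1$, which absorbs the otherwise-present factor of $1/e$ and collapses the prefactor to the clean $1/(\nu\sigma)$ asserted in the statement.
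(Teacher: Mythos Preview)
Your proposal is correct and follows essentially the same route as the paper: both proofs use the triangle inequality for part (I), and for part (II) both differentiate term-by-term, insert the factor $\nu^n/\nu^n$, optimize the auxiliary function $x\mapsto x e^{-\sigma x}$ to extract the constant $1/(e\sigma)$, and then invoke $0<\sigma\le 1$ to replace $e^{\sigma}$ by $e$ (equivalently, to bound $e^{\sigma-1}\le 1$). If anything, your closing paragraph makes the role of the hypothesis $\sigma\le 1$ more explicit than the paper does.
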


\begin{proof}
The proof is similar to the proof of Lemma \ref{lem:cauchyBounds_Fourier}, 
the difference being the 
estimate of the derivative.  Since $f$ is analytic in $D_\nu$ we have
\[
f'(z) = \sum_{n=1}^\infty n a_n z^{n-1},
\]
for all $z \in B_\nu$, and again we will trade some domain for  
uniform bounds on derivatives.  So, 
choose $0 < \sigma \leq 1$ and define $\tilde \nu = \nu e^{-\sigma}$.  As in the 
Fourier case, define the function $s \colon [0, \infty) \to \mathbb{R}$ by 
\[
s(x) := x e^{- \sigma x}.
\]
We have that $s$ is positive and
\[
s(x) \leq \frac{1}{e \sigma}.
\]
Then for any $z \in D_{\tilde \nu}$ we have
\begin{eqnarray*}
|f'(z)| &=& \sum_{n=1}^\infty n |a_n| |z|^{n-1} \\
&\leq& \sum_{n=1}^\infty n |a_n|  \frac{\nu}{\nu}  | \nu e^{-\sigma} |^{n-1}  \\
&\leq& \sum_{n=1}^\infty \left( \frac{e^{\sigma}}{\nu} n e^{-\sigma n}  \right) |a_n| \nu^n \\
&\leq& \sum_{n=1}^\infty \left( \frac{e}{\nu} n e^{-\sigma n}  \right) |a_n| \nu^n \\
&\leq& \sum_{n=0}^{\infty} \frac{e}{\nu} s(n) |a_n|\nu^n \\
&\leq& \sum_{n=0}^\infty \frac{e}{\nu} \frac{1}{e \sigma} |a_n| \nu^n \\
&=& \frac{1}{\nu \sigma} \|a\|_{1,\nu}^1.
\end{eqnarray*}
\end{proof}

We now to consider these lemmas to the context of parameterized periodic manifolds 
for the CRFBP. Let $\omega_m$ denote the Frequency of the periodic orbit of interest, 
and $\nu_m$ denote the weight taken for the $\ellnu$ norms from each step of the validation 
discussed in Section  \ref{sec:TailArgument}. The weight in the Taylor direction is $1$ by construction of the norm in $X_2^\infty$. 
Lemma is \ref{lem:cauchyBounds_Taylor} is applied with $\nu=1$, for fixed $r^\star$, 
to the endpoint of the domain of the polynomial bound in Theorem 
\ref{thm:ContractionMapping}.  Choose $\sigma_m$ such that
\[
r^\star < \frac{\ln(\nu_m)}{\omega_m} - \sigma_m.
\]
It follows that for all radius $r \leq r^\star$ and for $b \in \mathbb{R}$ with $|b| = r$, we have that
\begin{equation}\label{eq:CauchyFourier}
\left\| \frac{\partial}{\partial\theta} E(\bar \theta,\bar\phi)b\right\|_\infty = \max_{1\leq i \leq 9} \left| \frac{\partial}{\partial\theta} E^i(\bar \theta,\bar\phi)b\right|\leq \left(\frac{r_m}{e\sigma_m}\right) r.
\end{equation}
The other partial derivative is bound similarly, and it follows from the chain rule that
\begin{align*}
\frac{\partial}{\partial \phi} E(\bar\theta,\bar\phi) &= \frac{\partial}{\partial \sigma_1} E(\bar\theta,\bar \sigma_1,\bar \sigma_2) \frac{\partial \sigma_1}{\partial \phi} +\frac{\partial}{\partial \sigma_2} E(\bar\theta,\bar \sigma_1,\bar \sigma_2) \frac{\partial \sigma_2}{\partial \phi}, \\
\end{align*}
where $\frac{\partial \sigma_1}{\partial \phi}$ and $\frac{\partial \sigma_2}{\partial \phi}$ depend 
on the choice of the function $\sigma(\rho,\phi) = (\sigma_1,\sigma_2)$. Note however that in this work, the 
partial derivatives of interest are bounded by a constant $\rho$ that can be fixed in the construction of the BVP. Again, Lemma \ref{lem:cauchyBounds_Taylor} is 
applied with $\nu=1$ as the weight in the Taylor direction. Hence, whenever $\rho$ 
satisfies 
\[
\rho < \tilde \nu = e^{\ln(0.99)} \bydef e^{-s}
\]
it follows that
\begin{equation}\label{eq:CauchyTaylor}
\left\| \frac{\partial}{\partial \phi} E(\bar\theta,\bar\phi)b \right\|_{\infty} \leq
 \left( 2\rho \frac{r_m}{s} \right) r,
\end{equation}
for any $\|b\|_\infty = r$. Finally, note that whenever $\rho_u$ is a variable of the problem
it is possible to use Lemma \ref{lem:cauchyBounds_Taylor} to obtain 
\begin{equation}\label{eq:CauchyRho}
\left\| \frac{\partial}{\partial \rho_u} E(\bar\theta,\bar\phi)b \right\|_{\infty} \leq \left( 2\frac{r_m}{s} \right) r.
\end{equation}
This follows from the chain rule, and we note that this estimates require that $L$ is fixed large 
enough so that $\bar \rho_u + r < \tilde \nu$. This verification ensures the validity of the estimates 
in a neighborhood of the approximation. The variable $\rho_s$ is fixed and the stable 
parameterization will not require this last estimate. 

%
%
%
%
%
%
%

\section{Numerical bounds on the norms of each $D_{ij}^\infty$} \label{Ad:Dbounds}

It was already shown in Section \ref{Sec:RewritingTail} that
\begin{align*}
\left\| \mathcal{L}_{\alpha}^{-1}\circ D^\infty(h) \right\|_{(\ellnu)^9} 
&\leq \max_{1\leq i \leq 9} \sum_{|k|<K} 
\left|\frac{\left(D^\infty h\right)_k^i}{-\im \omega k -\langle \alpha,\lambda \rangle}\right|\nu^{|k|} +d_\alpha^\infty \sum_{j=1}^9\left\|D_{ij}^\infty \right\|_{\mathcal{B}\left(\ellnu \right)} \\
\end{align*}
The cases $i=1,3,5$ include only a single linear term, and is non-zero only in the 
case $|k|\geq K$, so that the first sum vanishes in these cases and we set
\[
\|D_{12}^\infty \|_{\mathcal{B}(\ellnu)} = \|D_{34}^\infty \|_{\mathcal{B}(\ellnu)} = \|D_{35}^\infty \|_{\mathcal{B}(\ellnu)} = 1.
\]
The other cases contain convolution products.  For example, for all $k \in \mathbb{Z}$
 we have that
\begin{align*}
(D_{21}^\infty h^1)_k &=  \sum_{i=1}^3 m_i\left[ (h^\infty)^1 \star (a_0^K)^{6+i}
 \star (a_0^K)^{6+i}\star (a_0^K)^{6+i} \right]_k + 3m_i\left[ h^1 
 \star (a_0^\infty)^{6+i} \star (a_0^K)^{6+i}\star (a_0^K)^{6+i} \right]_k \\
& +\sum_{i=1}^3 3m_i\left[ h^1 \star (a_0^\infty)^{6+i} 
\star (a_0^\infty)^{6+i}\star (a_0^K)^{6+i} \right]_k +m_i\left[ h^1 
\star (a_0^\infty)^{6+i} \star (a_0^\infty)^{6+i}\star (a_0^\infty)^{6+i} \right]_k \\
\end{align*}
To compute a bound on this term, we note that 
\[
\| (a_0^K)^i \|_{1,\nu} \leq \| \bar a_0^i \|_{1,\nu} + r_0 \bydef n^i, \quad \forall i=1,2,\hdots,9,
\]
so that
\[
\left\| D_{21}^\infty h^1 \right\|_{1,\nu} \leq \sum_{i=1}^3 \left[ \sum_{|k|<K}\left|\left[ (h^\infty)^1 \star (a_0^K)^{6+i} \star (a_0^K)^{6+i}\star (a_0^K)^{6+i} \right]_k\right|\nu^{|k|} +3m_i r_0 n^{6+i} n^{6+i} + 3m_i r_0r_0 n^{6+i} +m_i r_0 r_0 r_0 \right]. 
\]


\section{The $Z_1$ bound for Chebyshev expansion} \label{Add:Z1bound}
In this section we present a table containing bounds for all non-zero 
terms in the $Z_1$ estimates. 
Throughout the discussion, $h^{i,j}$ denotes an element of $\ellnu$ 
with norm $r$ 
and $h^{i,j}_\infty = h^{i,j} - \pi_m{h^{i,j}}$, 
so that $(h^{i,j}_\infty)_k = 0$ for $k< m-1$.   Recall also that $\bar a^{i,j}$ is the $i-$th component of the $j-$th sequence of Chebyshev coefficients 
of the solution to \eqref{BVP:Chebyshev}. 
Some $d^i$ are simplified due to the fact that that we always choose  
$\bar \beta =0$.  Then any terms which are constant or higher order 
with respect to $\beta$ do not appear in the derivatives, and hence are 
not present in the bounds (only first order terms with respect to 
$\beta$ are present).

For the sake of completeness, one case is illustrated. Consider 
\begin{align*}
d_k^{2,j}= &\Lambda_k \left( h_l\left( \bar\beta \bar a^{2,j} +2 \bar a^{4,j} +\bar a^{1,j} \right) + \bar L\left( h_\beta \bar a^{2,j} +\bar\beta h^{2,j} +2 h^{4,j} +h^{1,j} \right) \right) \\
&+ h_L \Lambda_k \left( \sum_{j=1}^3 m_j (\bar a^{1,j} -x_1)\ast \bar a^{6+i,j}\ast \bar a^{6+i,j}\ast \bar a^{6+i,j} \right) \\
&+ \bar L \Lambda_k\left( \sum_{l=1}^3 m_j \left( h^{1,j} \ast \bar a^{6+l,j}\ast \bar a^{6+l,j}\ast \bar a^{6+l,j} +3(\bar a^{1,j} -x_l) \ast h^{6+l,j}\ast \bar a^{6+l,j}\ast \bar a^{6+l,j} \right) \right)
\end{align*}
Finally, note that all other cases are zero. So that the six scalar components are zero, 
and $d_0^{i,j}=0$ for all $1 \leq i \leq 9$, $1\leq j \leq M$. The other cases are in the 
two tables.
   
\begin{table}[H]
\caption{The sequence $d \in X$ for all $1\leq j \leq M$, cases $1\leq i \leq 6$}
\begin{tabular}{|l |l |}
\hline
$i$ and $k$  & $d_k^{6 +9(j-1) +i}$ \\ 
\hline
$1$, $1\leq k < m$ & $0$ \\
$1$, $k \geq m$ & $ \bar L \Lambda_k (h^{2,j})$ \\
\hline
$2$, $1\leq k < m$ & $\bar L \displaystyle \sum_{l=1}^3 m_l\Lambda_k
\left(h^{1,j}_{\infty} \ast \bar a^{6+l,j}\ast \bar a^{6+l,j}\ast \bar a^{6+l,j} 
+3(\bar a^{1,j} -x_l) \ast  h^{6+l,j}_{\infty}\ast \bar a^{6+l,j}\ast \bar a^{6+l,j} \right)$ \\
$2$, $k\geq m$ & $ \bar L \Lambda_k(2h^{4,j} +h^{1,j}) 
+h_L \Lambda_k \left( \displaystyle \sum_{j=1}^3 m_j (\bar a^{1,j} -x_1)\ast
 \bar a^{6+i,j}\ast \bar a^{6+i,j}\ast \bar a^{6+i,j} \right)$ \\
 & $+\bar L \Lambda_k\left( \displaystyle \sum_{l=1}^3 m_j \left( h^{1,j} \ast 
 \bar a^{6+l,j}\ast \bar a^{6+l,j}\ast \bar a^{6+l,j} +3(\bar a^{1,j} -x_l) 
 \ast h^{6+l,j}\ast \bar a^{6+l,j}\ast \bar a^{6+l,j} \right) \right)$ \\
\hline
$3$, $1\leq k < m$ & $0$ \\
$3$, $k \geq m$ & $ \bar L \Lambda_k (h^{4,j})$ \\
\hline
$4$, $1\leq k < m $ & $\bar L \displaystyle
 \sum_{l=1}^3 m_l\Lambda_k\left(h^{3,j}_{\infty} \ast \bar a^{6+l,j}\ast
  \bar a^{6+l,j}\ast \bar a^{6+l,j} +3(\bar a^{3,j} -y_l) \ast h^{6+l,j}_\infty
   \ast \bar a^{6+l,j}\ast \bar a^{6+l,j} \right)$ \\
$4$, $k\geq m$ & $ \bar L \Lambda_k(-2h^{2,j} +h^{3,j}) + 
h_L \Lambda_k \left( \displaystyle \sum_{j=1}^3 m_j (\bar a^{3,j} -y_1)\ast
 \bar a^{6+i,j}\ast \bar a^{6+i,j}\ast \bar a^{6+i,j} \right)$ \\
 & $+\bar L \Lambda_k\left( \displaystyle \sum_{l=1}^3 m_j \left( h^{3,j} 
 \ast \bar a^{6+l,j}\ast \bar a^{6+l,j}\ast \bar a^{6+l,j} +3(\bar a^{3,j} -y_l)
  \ast h^{6+l,j}\ast \bar a^{6+l,j}\ast \bar a^{6+l,j} \right) \right)$ \\
\hline
$5$, $1\leq k < m$ & $0$ \\
$5$, $k \geq m$ & $ \bar L \Lambda_k (h^{6,j})$ \\
\hline
$6$, $1\leq k < m $ & $\bar L \displaystyle \sum_{l=1}^3 m_l\Lambda_k\left(h^{5,j}_\infty 
\ast \bar a^{6+l,j}\ast \bar a^{6+l,j}\ast \bar a^{6+l,j} +3(\bar a^{5,j} -z_l) \ast h^{6+l,j}_\infty
 \ast \bar a^{6+l,j}\ast \bar a^{6+l,j} \right)$ \\
$6$, $k\geq m$ & $ h_L \Lambda_k \left( \displaystyle \sum_{j=1}^3 m_j (\bar a^{5,j} -z_1)
\ast \bar a^{6+i,j}\ast \bar a^{6+i,j}\ast \bar a^{6+i,j} \right)$ \\
 & $+\bar L \Lambda_k\left( \displaystyle \sum_{l=1}^3 m_j \left( h^{5,j} \ast \bar a^{6+l,j}
 \ast \bar a^{6+l,j}\ast \bar a^{6+l,j} +3(\bar a^{5,j} -z_l) \ast h^{6+l,j}\ast \bar a^{6+l,j}
 \ast \bar a^{6+l,j} \right) \right)$ \\
\hline
\end{tabular}
\end{table}

\begin{table}[H]
\caption{The sequence $d \in X$ for all $1\leq j \leq M$, case $i=7,8,9$}
\begin{tabular}{|l |l |}
\hline
$1\leq k < m $ & $~\bar L \Lambda_k\left( \displaystyle \sum_{l=1}^6 h^{l,j}_\infty 
\ast \bar a^{i,j}\ast \bar a^{i,j}\ast \bar a^{i,j} +3(\bar a^{1,j} -x_1) \ast \bar a^{2,j} \ast h^{i,j}_\infty 
\ast \bar a^{i,j}\ast \bar a^{i,j} \right)$ \\
& $ +\bar L \Lambda_k\bigg( 3(\bar a^{3,j} -y_l) \ast \bar a^{4,j} \ast h^{i,j}_\infty \ast \bar a^{i,j}\ast 
\bar a^{i,j} + 3(\bar a^{5,j} -z_l) \ast \bar a^{6,j} \ast h^{i,j}_\infty \ast \bar a^{i,j}\ast \bar a^{i,j} \bigg) $ \\
\hline
$k \geq  m $ & $~\bar L \Lambda_k\left( \displaystyle \sum_{l=1}^6 h^{l,j} \ast 
\bar a^{i,j}\ast \bar a^{i,j}\ast \bar a^{i,j} +3(\bar a^{1,j} -x_1) \ast \bar a^{2,j} 
\ast h^{i,j} \ast \bar a^{i,j}\ast \bar a^{i,j} \right)$ \\
& $ +\bar L \Lambda_k\bigg( 3(\bar a^{3,j} -y_l) \ast \bar a^{4,j} \ast h^{i,j} \ast 
\bar a^{i,j}\ast \bar a^{i,j} + 3(\bar a^{5,j} -z_l) \ast \bar a^{i,j} \ast h^{i,j}\ast \bar a^{i,j}\ast \bar a^{i,j} \bigg) $ \\
& $ +h_L \Lambda_k\bigg( \left((\bar a^{1,j} -x_l) \ast \bar a^{2,j} +(\bar a^{3,j} -y_l)
 \ast \bar a^{4,j} +(\bar a^{5,j} -z_l) \ast \bar a^{6,j} \right) \ast \bar a^{i,j}\ast \bar a^{i,j}\ast \bar a^{i,j} \bigg) $ \\
\hline
\end{tabular}
\end{table}

}

\bibliographystyle{plain}
\bibliography{references} 
\end{document}